\newtheorem{satz}{Theorem}[section]
\newtheorem{cor}[satz]{Corollary}
\newtheorem{lem}[satz]{Lemma}
\newtheorem{prop}[satz]{Proposition}
\newtheorem{defi}[satz]{Definition}
\newtheorem{bem}[satz]{Remark}
\newtheorem{bsp}[satz]{Example}
\numberwithin{equation}{section}
\newcommand{\sgn}{\operatorname{sgn}}
\newcommand{\erf}{\operatorname{erf}}
\renewcommand{\Re}{\operatorname{Re}}
\renewcommand{\Im}{\operatorname{Im}}
\newcommand{\Arg}{\operatorname{Arg}}
\newcommand{\rank}{\operatorname{rank}}
\newcommand{\vvect}[2]{\left(\begin{array}{c} #1 \\ #2 \end{array}\right)}
\newcommand{\mmatrix}[4]{\left(\begin{array}{cc} #1 & #2 \\ #3 & #4 \end{array}\right)}
\title[Green's Function for the Schr\"odinger Equation]
{\bf Green's Function for the Schr\"odinger Equation with a Generalized Point Interaction
and Stability of Superoscillations}
\author[Yakir Aharonov]{Yakir Aharonov}
\address{(YA) Schmid College of Science and Technology, Chapman University, Orange 92866, CA, USA}
\email{aharonov@chapman.edu}
\author[Jussi Behrndt]{Jussi Behrndt}
\address{(JB) Institut f\"ur Angewandte Mathematik, Technische Universit\"at Graz, Steyrergasse 30, 8010 Graz, Austria}
\email{behrndt@tugraz.at}
\address{(JB) Department of Mathematics, Stanford University, 450 Jane Stanford Way, Stanford CA 94305-2125, US}
\email{jbehrndt@stanford.edu}
\author[Fabrizio Colombo]{Fabrizio Colombo}
\address{(FC) Politecnico di Milano, Dipartimento di Matematica, Via E. Bonardi, 9, 20133 Milano, Italy}
\email{fabrizio.colombo@polimi.it}
\author[Peter Schlosser]{Peter Schlosser}
\address{(PS) Institut f\"ur Angewandte Mathematik, Technische Universit\"at Graz, Steyrergasse 30, 8010 Graz, Austria}
\email{schlosser@tugraz.at}
\begin{document}

\begin{abstract}\vspace{1cm}
In this paper we study the time dependent Schr\"odinger equation with all possible self-adjoint singular interactions located at the origin,
which include the $\delta$
and $\delta'$-potentials as well as boundary conditions of Dirichlet, Neumann, and  Robin type as particular cases.
 We derive an explicit representation of the time dependent
Green's function and give a mathematical rigorous meaning to the corresponding integral for holomorphic initial conditions, using Fresnel integrals.
Superoscillatory functions  appear in the context of weak measurements in quantum mechanics
and are naturally treated as holomorphic entire functions.
As an application of the Green's function we study the stability and oscillatory
properties of the solution of the Schr\"odinger equation subject to a generalized point interaction
when the initial datum is a superoscillatory function.
\end{abstract}

\maketitle

\par\noindent AMS Classification: 81Q05, 35A08, 32A10.
\par\noindent \textit{Key words}:  Green's function, Schr\"odinger equation, point interaction, superoscillating function.


\tableofcontents

\section{Introduction}

\medskip

The main purpose of this paper is to study the Green's function of the time dependent Schr\"{o}dinger equation subject to general self-adjoint
point interactions located at the origin, and to prove stability results for the solutions corresponding to
superoscillating initial data.
As a consequence of our detailed analysis we also obtain an explicit expression and
asymptotic expansion of the time dependent plane wave solution,
which allows to discuss the oscillatory properties of the time evolution of superoscillations under generalized point interactions.

Strongly localized potentials, also called pseudo-potentials or nowadays better known as $\delta$-potentials,
were already considered by Kronig and Penney in \cite{KrPe31} and Fermi in \cite{Fe36}. Heuristically speaking,
these $\delta$-potentials are represented by the Hamiltonian
\begin{equation}\label{Eq_Hamiltonian_delta}
H=-\Delta+\sum_{y\in Y}c_y\,\delta(x-y),
\end{equation}
where $c_y\,\delta(x-y)$ is a point source of strength $c_y$ located at the point $y\in\mathbb R^d$, $d\geq 1$.
The $\delta$-potentials may form a discrete set, e.g., a periodic lattice
$Y=\mathbb{Z}^d$, or a single point $Y=\{0\}$. The rigorous mathematical
meaning of the Hamiltonian \eqref{Eq_Hamiltonian_delta} was given only much later by Berezin and Faddeev in \cite{BeFa61}.

In this paper we will restrict ourselves to a single point interaction in $\mathbb R$
and hence assume  $Y=\{0\}$ and $d=1$ from now on.
In this context $H$ in \eqref{Eq_Hamiltonian_delta} is defined as a proper self-adjoint extension of the symmetric
operator $-\Delta$ on $C^\infty_0(\mathbb{R}\setminus\{0\})$ which corresponds to
interface (or jump) conditions at the origin of the form
\begin{equation}\label{del}
 \begin{split}
 u(0^+)&=u(0^-), \\
u'(0^+)-u'(0^-)&=c_0\,u(0);
 \end{split}
\end{equation}
a detailed discussion
can be found in the standard monograph \cite{ALB}.
Besides the $\delta$-potential also other types of self-adjoint interface conditions can be treated (see \cite{BrJe01,Ca93,ChHu93,ExGr99,KK03,RoTa96,Se86}
and \cite{BLL13,ER16,MPS16} for
interactions on hypersurfaces),
among them so-called $\delta'$-potentials and further generalizations, as well as
decoupled systems with Dirichlet, Neumann, or Robin conditions. There are various ways to describe the complete family of self-adjoint interface conditions
at the origin and for our purposes it is convenient to use the parametrization
\begin{equation}\label{Eq_General_jump_condition}
(I-J)\vvect{u(0^+)}{u(0^-)}=i(I+J)\vvect{u'(0^+)}{-u'(0^-)},
\end{equation}
with unitary $2\times 2$-matrices $J$ (see Example~\ref{bsp_Delta} for identifying \eqref{del} as a special case of \eqref{Eq_General_jump_condition}).  To be more precise:
The class of jump conditions \eqref{Eq_General_jump_condition} coincides with the class of self-adjoint interface conditions
at the point $0$. In other words, each unitary matrix $J\in\mathbb{C}^{2\times 2}$  leads to a
self-adjoint realization of the Laplacian in $L^2(\mathbb{R})$ with a generalized point interaction supported at the point $0$, and conversely,
for each self-adjoint Laplacian in $L^2(\mathbb{R})$ with a generalized point interaction supported at the point $0$ there exists a
unitary matrix $J\in\mathbb{C}^{2\times 2}$ such that the interface condition has the form \eqref{Eq_General_jump_condition}; cf. \cite[Chapter 2.2]{BHS20}.

An important problem we study in this paper is the time dependent Schr\"odinger equation
 with holomorphic initial datum $F$ subject to
a general self-adjoint singular interaction supported at the origin, that is, we consider
\begin{subequations}\label{Eq_Schroedinger}
\begin{align}
i\frac{\partial}{\partial t}\Psi(t,x)&=-\frac{\partial^2}{\partial x^2}\Psi(t,x),\hspace{3.1cm}t>0,\,x\in\mathbb{R}\setminus\{0\}, \label{Eq_Schroedinger_1} \\
(I-J)\vvect{\Psi(t,0^+)}{\Psi(t,0^-)}&=i(I+J)\vvect{\frac{\partial}{\partial x}\Psi(t,0^+)}{-\frac{\partial}{\partial x}\Psi(t,0^-)},\qquad t>0, \label{Eq_Schroedinger_2} \\
\Psi(0^+,x)&=F(x),\hspace{4.4cm} x\in\mathbb{R}\setminus\{0\}. \label{Eq_Schroedinger_3}
\end{align}
\end{subequations}
It will be shown in Section~\ref{sec_Greens_function_of_singular_interactions} that the corresponding Green's function is given by
\begin{equation}\label{Eq_Greenfunction}
\begin{split}
G(t,x,y)=&\left(\mu_+^{(x,y)}\Lambda\left(\frac{|x|+|y|}{2\sqrt{it}}+\omega_+\sqrt{it}\right)+\mu_-^{(x,y)}\Lambda\left(\frac{|x|+|y|}{2\sqrt{it}}+\omega_-\sqrt{it}\right)\right)e^{-\frac{(|x|+|y|)^2}{4it}} \\
&+\frac{1}{2\sqrt{i\pi t}}\left(\mu_0^{(x,y)}e^{-\frac{(|x|+|y|)^2}{4it}}+e^{-\frac{(x-y)^2}{4it}}\right),\qquad t>0,\,x,y\in\mathbb{R}\setminus\{0\},
\end{split}
\end{equation}
where the entire function $\Lambda$ is defined in \eqref{Eq_Lambda} and the coefficients $\mu_\pm$, $\mu_0$, and $\omega_\pm$ are explicitly
determined in terms of the entries of the unitary matrix $J$ in the jump condition \eqref{Eq_Schroedinger_2}; cf. Theorem~\ref{satz_Green_function}, the examples in Section~\ref{Special cases }, and
\cite{AlBrDa94,AlBrDa95,GaSc86,Ma89,Se71} for related results.
Using the Green's function \eqref{Eq_Greenfunction} the solution
$\Psi$ of \eqref{Eq_Schroedinger} can be written as the integral
\begin{equation}\label{Eq_WavefunctionHH}
\Psi(t,x)=\int_\mathbb{R}G(t,x,y)F(y)dy,\qquad t>0,\,x\in\mathbb{R}\setminus\{0\}.
\end{equation}
While this integral is well defined for compactly supported continuous functions $F$, one has difficulties in making sense of \eqref{Eq_WavefunctionHH}
already for plane waves $F(x)=e^{ikx}$. A mathematical rigorous analysis of this issue for a certain class of holomorphic functions with
growth condition is provided in Section \ref{sec_Solution_of_the_Schroedinger_equation}, where the main tool is the Fresnel integral approach.

\medskip
The general results in Section~\ref{sec_Greens_function_of_singular_interactions} and Section~\ref{sec_Solution_of_the_Schroedinger_equation}
are applied to superoscillations in Section~\ref{sec_Time_evolution_of_superoscillations}.
Superoscillating functions are band-limited functions that can oscillate faster than their fastest Fourier component.
They appear in quantum mechanics as results of weak measurements and, in particular, their time evolution under the Schr\"odinger equation is of crucial importance,
see \cite{aav,abook,av,b5}.
For a rigorous treatment of this subject we refer to \cite{ABCS19,acsst4,acsst1,acsst3,acsst6,JFAA,AOKI,Jussi,harmonic} and \cite{acsst5}.
These kind of functions (or sequences) also appear in antenna theory \cite{TDFG} (see also \cite{MBANTENNA}) and
various applications in optics were studied by M.V. Berry and many others, see, e.g.,
\cite{berry2,berry,berry-noise-2013,BerryMILAN,b1,b4,kempf1,kempf2,leeferreira2,lindberg}.
More information can also be found in the introductory papers \cite{QS1,QS3,QS2,kempfQS} and in the \textit{Roadmap on superoscillations} \cite{Be19}.

A weak measurement of a quantum observable represented by the self-adjoint operator $A$,
involving a pre-selected state $\psi_0$ and a post-selected state $\psi_1$, leads to the weak value
$$
A_{weak}:=\frac{(\psi_1,A\psi_0)}{(\psi_1,\psi_0)}\in\mathbb C,
$$
where the real part of $A_{weak}$ can be interpreted as the shift and the imaginary part as the
momentum of the pointer recording the measurement.
An important feature of the weak measurement is that, in contrast with strong measurements $A_{strong}:=(\psi, A\psi)$,
the real part of $A_{weak}$ may become very large when the states $\psi_0$ and $\psi_1$
are almost orthogonal; this leads to superoscillations.
A typical superoscillatory function is
\begin{equation}\label{Eq_Fn}
F_n(x,k)=\sum_{l=0}^nC_l(n,k)e^{i(1-\frac{2l}{n})x},\qquad x\in\mathbb R,
\end{equation}
where $|k|>1$ and
\begin{equation*}
C_l(n,k)=\vvect{n}{l}\left(\frac{1+k}{2}\right)^{n-l}\left(\frac{1-k}{2}\right)^l.
\end{equation*}
If we fix $x\in\mathbb{R}$ and let $n$ tend to infinity, we obtain $\lim_{n\to\infty}F_n(x,k)=e^{ikx}$
uniformly for $x$ in compact subsets of $\mathbb{R}$. The notion superoscillations comes
from the fact that the frequencies $(1-\frac{2l}{n})$ in (\ref{Eq_Fn}) are in modulus bounded by 1,
but the frequency $k$ of the limit function can be arbitrarily large.

\medskip
As a consequence of the representation \eqref{Eq_WavefunctionHH} of the solution of
the Schr\"odinger equation subject to
a general self-adjoint singular interaction
we ask: When does a convergent sequence of initial conditions
\begin{equation}\label{aq}
\lim\limits_{n\to\infty}F_n=F
\end{equation}
also lead to a convergent sequence of solutions
\begin{equation}\label{ay}
\lim\limits_{n\rightarrow\infty}\Psi(t,x;F_n)=\Psi(t,x;F),
\end{equation}
and which type of convergence should be considered in \eqref{aq} and \eqref{ay}$\,$? Our abstract result Theorem \ref{satz_Psi_convergence}, which is also the bridge
to investigate the time evolution of superoscillations in Section \ref{sec_Time_evolution_of_superoscillations}, shows that \eqref{ay} holds
uniformly on compact subsets of $(0,\infty)\times\mathbb{R}$,
whenever the sequence $(F_n)_n$ satisfy some exponential boundedness conditions and  the convergence in \eqref{aq} is such that
\[
\lim\limits_{n\rightarrow\infty}\sup\limits_{z\in S_\alpha\cup(-S_\alpha)}\big|F_n(z)-F(z)\big|e^{-C|z|}=0
\]
for some $C\geq 0$ and certain sectors $S_\alpha$ and $-S_\alpha$ in the complex plane; cf. Section~\ref{sec_Solution_of_the_Schroedinger_equation} for more details.
These abstract assumptions are in accordance with the convergence properties of (holomorphic extensions of) superoscillating functions
in spaces of entire functions with exponential growth that
have been clarified just in the recent years, see \cite{CSSYgenfun}.
The case of superoscillatory initial data is then discussed in Corollary~\ref{cor_Stability_of_superoscillations} and the explicit form,
oscillatory behaviour, and long time asymptotics
of the corresponding limit in \eqref{ay} are provided in Proposition~\ref{prop_Plane_wave_solution} and Theorem~\ref{satz_Asymptotics}.

\medskip\noindent
{\bf Acknowledgement.}
J.\,B. gratefully acknowledges support for the Distinguished Visiting Austrian Chair at Stanford University by the Europe Center and the
Freeman Spogli Institute for International Studies.

\section{Green's function for the Schr\"odinger equation with a generalized point interaction}\label{sec_Greens_function_of_singular_interactions}

In this section we derive the Green's function of the time dependent Schr\"odinger equation (\ref{Eq_Schroedinger}) with a generalized singular interaction located at the origin.
That is, we construct a function $G$ which depends on the matrix $J$, such that the solution $\Psi$ of \eqref{Eq_Schroedinger} can be written in the form
\begin{equation}\label{Eq_Green_function_integral_sec2}
\Psi(t,x)=\int_\mathbb{R}G(t,x,y)F(y)dy,\qquad t>0,\,x\in\mathbb{R}\setminus\{0\}.
\end{equation}
In Section \ref{sec_Solution_of_the_Schroedinger_equation} we shall clarify for which initial conditions $F$ and in which sense this integral is understood.
Here, we only want to derive the explicit form and some properties of the Green's function $G$ itself.

\medskip

We start by defining the entire function
\begin{equation}\label{Eq_Lambda}
\Lambda(z):= e^{z^2}(1-\erf(z)),\qquad z\in\mathbb{C},
\end{equation}
where $\erf(z)=\frac{2}{\sqrt{\pi}}\int_0^ze^{-\xi^2}d\xi$ is the well known error function. Some important properties of
this function are collected in the following lemma; cf. \cite[Lemma 3.1]{Jussi}.

\begin{lem}\label{lem_Lambda_Properties}
The function $\Lambda$ in \eqref{Eq_Lambda} has the following properties:

\begin{enumerate}
\item[{\rm (i)}] The function $\Lambda$ satisfies the differential equation
\begin{equation}\label{Eq_Lambda_Derivative}
\frac{d}{dz}\Lambda(z)=2z\Lambda(z)-\frac{2}{\sqrt{\pi}},\qquad z\in\mathbb{C}.
\end{equation}

\item[{\rm (ii)}] The value of the function $\Lambda$ at $-z$ is given by
\begin{equation}\label{Eq_Lambda_Negative}
\Lambda(-z)=2e^{z^2}-\Lambda(z),\qquad z\in\mathbb{C}.
\end{equation}

\item[{\rm (iii)}] The absolute value of $\Lambda(z)$ can be estimated by
\begin{equation}\label{Eq_Lambda_Estimate}
|\Lambda(z)|\leq\Lambda(\Re(z)),\qquad z\in\mathbb{C}.
\end{equation}

\item[{\rm (iv)}] The function $\Lambda$ is monotonically decreasing on $\mathbb{R}$ and asymptotically on
$\mathbb{C}$ one has
\begin{equation}\label{Eq_Lambda_Asymptotic}
\Lambda(z)=\left\{\begin{array}{ll} \mathcal{O}\big(\frac{1}{|z|}\big), & \text{if }\Re(z)\geq 0, \\ 2e^{z^2}+\mathcal{O}\big(\frac{1}{|z|}\big), & \text{if }\Re(z)\leq 0, \end{array}\right.\quad\text{as }|z|\rightarrow\infty.
\end{equation}

\item[{\rm (v)}] For all $a>0$ and $b,c\in\mathbb{C}$ one has the integral identities
\begin{subequations}
\begin{align}
\int_0^\infty e^{-ax^2-bx}dx&=\frac{\sqrt{\pi}}{2\sqrt{a}}\Lambda\Big(\frac{b}{2\sqrt{a}}\Big), \label{Eq_Lambda_Integral} \\
\int_0^\infty e^{-ax^2-bx}\Lambda\big(\sqrt{a}\,x+c\big)dx&=-\frac{1}{2\sqrt{a}}\left\{\begin{array}{ll} \frac{\Lambda(c)-\Lambda\big(\frac{b}{2\sqrt{a}}\big)}{c-\frac{b}{2\sqrt{a}}}, & \text{if }c\neq\frac{b}{2\sqrt{a}}, \\ \Lambda'(c), & \text{if }c=\frac{b}{2\sqrt{a}}. \end{array}\right. \label{Eq_Lambda_Integral_2}
\end{align}
\end{subequations}
\end{enumerate}
\end{lem}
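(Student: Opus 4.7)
The plan is to treat the five items largely independently, with the integral representation
\[
\Lambda(z)=\frac{2}{\sqrt{\pi}}\int_0^\infty e^{-t^2-2zt}\,dt,\qquad z\in\mathbb{C},
\]
as a central workhorse. This identity follows by writing $1-\erf(z)=\frac{2}{\sqrt{\pi}}\int_z^\infty e^{-\xi^2}d\xi$ (using Cauchy's theorem to justify the contour for general complex $z$) and then substituting $\xi=z+t$; the factor $e^{z^2}$ in the definition of $\Lambda$ cancels the $e^{-z^2}$ that drops out.

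For (i) I would just differentiate $\Lambda(z)=e^{z^2}(1-\erf(z))$ directly, using $\erf'(z)=\frac{2}{\sqrt{\pi}}e^{-z^2}$; the two exponentials cancel to produce the $-2/\sqrt{\pi}$ term. For (ii) I would use that $\erf$ is odd: $1-\erf(-z)=1+\erf(z)=2-(1-\erf(z))$, multiply by $e^{z^2}$, and note that $e^{(-z)^2}=e^{z^2}$. For (iii) I would feed complex $z$ into the integral representation above and estimate $|e^{-t^2-2zt}|=e^{-t^2-2\Re(z)t}$ pointwise in $t$, so that the modulus of the integral is bounded by $\Lambda(\Re(z))$.

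For (iv), monotonicity on $\mathbb{R}$ is immediate from the integral representation (differentiating under the integral sign gives a negative integrand), and equivalently from (i) combined with the fact that $\Lambda$ is positive on $\mathbb{R}$. The asymptotic expansion for $\Re(z)\geq 0$ I would obtain by one integration by parts in $\frac{2}{\sqrt{\pi}}\int_0^\infty e^{-t^2-2zt}dt$, producing the leading term $\frac{1}{\sqrt{\pi}\,z}$ and a remainder that is $O(|z|^{-3})$ uniformly in the right half plane; the case $\Re(z)\leq 0$ is then an immediate consequence of (ii) applied to $-z$, since the reflected argument lies in the right half plane.

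For (v), the first identity is the standard Gaussian completion of the square: write $-ax^2-bx=-a(x+b/(2a))^2+b^2/(4a)$, substitute $u=\sqrt{a}(x+b/(2a))$, and recognise the resulting tail integral as $\frac{\sqrt{\pi}}{2}\erfc(b/(2\sqrt{a}))$, which is $\frac{\sqrt{\pi}}{2}e^{-b^2/(4a)}\Lambda(b/(2\sqrt{a}))$. The second identity is the main obstacle, since a direct Fubini exchange leaves a nested $\Lambda$. My plan is instead to search for an explicit antiderivative of $e^{-ax^2-bx}\Lambda(\sqrt{a}x+c)$ using the ansatz
\[
H(x):=A\,e^{-ax^2-bx}\Lambda(\sqrt{a}x+c)+B\int_x^\infty e^{-as^2-bs}\,ds,
\]
differentiating and using (i) to absorb $\Lambda'(\sqrt{a}x+c)$. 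Matching coefficients of the two resulting terms forces $A=1/(2\sqrt{a}c-b)$ and $B=-2\sqrt{a}A/\sqrt{\pi}$, so that $H'(x)=e^{-ax^2-bx}\Lambda(\sqrt{a}x+c)$. Since $H(\infty)=0$ by the decay from (iv), the integral equals $-H(0)$, and inserting the first identity into the residual $\int_0^\infty e^{-as^2-bs}ds$ yields precisely the claimed formula. The degenerate case $c=b/(2\sqrt{a})$ is then obtained by taking the limit $c\to b/(2\sqrt{a})$ in the nondegenerate formula, where the difference quotient on the right converges to $\Lambda'(c)$.
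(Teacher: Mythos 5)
Your proof is correct and, in substance, follows the same route the paper takes: both hinge on the integral representation $\Lambda(z)=\tfrac{2}{\sqrt{\pi}}\int_0^\infty e^{-t^2-2zt}\,dt$ for (iii), (iv), and the first identity in (v), and both prove the second identity in (v) by exhibiting an explicit antiderivative built from the differential relation in (i). The only cosmetic differences are that you supply direct arguments for (i), (ii), and the asymptotics in (iv) where the paper simply cites an earlier reference, and that you arrive at the antiderivative via an undetermined-coefficients ansatz; after substituting $\int_x^\infty e^{-as^2-bs}\,ds=\tfrac{\sqrt{\pi}}{2\sqrt{a}}e^{-ax^2-bx}\Lambda\big(\sqrt{a}x+\tfrac{b}{2\sqrt{a}}\big)$ (a consequence of the first identity), your $H$ coincides exactly with the primitive the paper writes down, so the two computations are the same. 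Handling the degenerate case $c=\tfrac{b}{2\sqrt{a}}$ by passing to the limit is a legitimate alternative to the paper's separate primitive, justified by continuity of the integral in $c$ (dominated convergence). One small imprecision: a single integration by parts in (iv) yields remainder $\mathcal{O}(|z|^{-2})$, not $\mathcal{O}(|z|^{-3})$ as you state, but this is immaterial since only the $\mathcal{O}(|z|^{-1})$ bound is needed.
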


\begin{proof}
(i) and (ii) are contained in \cite[Lemma 3.1]{Jussi}.
\medskip

(iii) Using $\int_0^\infty e^{-\xi^2}d\xi=\frac{\sqrt{\pi}}{2}$ in the definition \eqref{Eq_Lambda} gives
\begin{equation}\label{Eq_Lambda_Property_1}
\Lambda(z)=\frac{2}{\sqrt{\pi}}e^{z^2}\left(\int_0^\infty e^{-\xi^2}d\xi-\int_0^ze^{-\xi^2}d\xi\right).
\end{equation}
Now we use that the complex integral over the entire function $e^{-\xi^2}$ is path independent and that
$\lim_{x\to\infty}\int_x^{x+z}e^{-\xi^2}d\xi=0$. Hence, the two integrals on the right-hand side of \eqref{Eq_Lambda_Property_1}
can be replaced by a path integral from $z$ to $\infty$, parallel to the real axis. This gives
\begin{equation}\label{Eq_Lambda_integral_representation}
\Lambda(z)=\frac{2}{\sqrt{\pi}}e^{z^2}\int_0^\infty e^{-(z+s)^2}ds=\frac{2}{\sqrt{\pi}}\int_0^\infty e^{-s^2-2zs}ds.
\end{equation}
This representation can now be used to estimate the absolute value
\begin{equation*}
|\Lambda(z)|\leq\frac{2}{\sqrt{\pi}}\int_0^\infty e^{-s^2-2\Re(z)s}ds=\Lambda(\Re(z)).
\end{equation*}

(iv) The monotonicity is a direct consequence of the representation \eqref{Eq_Lambda_integral_representation}
and the asymptotics were shown in \cite[Lemma 3.1]{Jussi}.

\medskip

(v) Substituting $t=\frac{s}{\sqrt{a}}$ in the integral \eqref{Eq_Lambda_integral_representation} gives
\begin{equation*}
\Lambda(z)=\frac{2\sqrt{a}}{\sqrt{\pi}}\int_0^\infty e^{-at^2-2z\sqrt{a}\,t}dt,
\end{equation*}
which is exactly \eqref{Eq_Lambda_Integral} if we evaluate at $z=\frac{b}{2\sqrt{a}}$. In order to check \eqref{Eq_Lambda_Integral_2} we first use \eqref{Eq_Lambda_Derivative} to obtain the primitive
\begin{equation*}
e^{-ax^2-bx}\Lambda\big(\sqrt{a}\,x+c\big)=\frac{1}{2\sqrt{a}}\frac{d}{dx}e^{-ax^2-bx}\frac{\Lambda\big(\sqrt{a}\,x+c\big)-\Lambda\Big(\sqrt{a}\,x+\frac{b}{2\sqrt{a}}\Big)}{c-\frac{b}{2\sqrt{a}}}.
\end{equation*}
The assertion on the integral in \eqref{Eq_Lambda_Integral_2} now simply follows by evaluating at $x=0$ and $x\to\infty$; observe that
by \eqref{Eq_Lambda_Asymptotic} the limit $x\to\infty$ vanishes. Similarly, also in the case $b=2\sqrt{a}\,c$ we get the primitive
\begin{equation*}
e^{-ax^2-2\sqrt{a}\,cx}\Lambda\big(\sqrt{a}\,x+c\big)=\frac{1}{2\sqrt{a}}\frac{d}{dx}e^{-ax^2-bx}\Lambda'\big(\sqrt{a}\,x+c\big)
\end{equation*}
and we also get the second case of the integral \eqref{Eq_Lambda_Integral_2} by evaluating the primitive at $x=0$ and $x\to\infty$.
\end{proof}

Using \eqref{Eq_Lambda} we now define for every $t>0$, $x\in\mathbb{R}\setminus\{0\}$, $z\in\mathbb{C}$, and $\omega\in\mathbb{R}$, the functions
\begin{subequations}\label{Eq_Part_Greenfunction}
\begin{align}
G_0(t,x,z)&:=\frac{1}{2\sqrt{i\pi t}}e^{-\frac{(|x|+z)^2}{4it}}, \label{Eq_Part_Greenfunction0} \\
G_1(t,x,z)&:=\Lambda\left(\frac{|x|+z}{2\sqrt{it}}+\omega\sqrt{it}\right)e^{-\frac{(|x|+z)^2}{4it}}, \label{Eq_Part_Greenfunction1} \\
G_\text{free}(t,x,z)&:=\frac{1}{2\sqrt{i\pi t}}e^{-\frac{(x-z)^2}{4it}}, \label{Eq_Part_Greenfunctionfree}
\end{align}
\end{subequations}
which will appear as components of the Green's function \eqref{Eq_Greenfunction} later on.
In the following preparatory lemma we check that each of these components is a solution of the free Schr\"odinger equation on $\mathbb{R}\setminus\{0\}$.

\begin{lem}\label{lem_Part_Greenfunction_DGL}
For every $t>0$, $x\in\mathbb{R}\setminus\{0\}$, $z\in\mathbb{C}$, the functions in \eqref{Eq_Part_Greenfunction} satisfy the differential equations
\begin{equation}\label{Eq_Part_Greenfunction_DGL}
i\frac{\partial}{\partial t}G_j(t,x,z)=-\frac{\partial^2}{\partial x^2}G_j(t,x,z),\qquad j\in\{0,1,\textnormal{free}\}.
\end{equation}
\end{lem}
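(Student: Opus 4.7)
\smallskip

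My plan is to reduce all three cases to the situation where $|x|$ is replaced by a smooth real variable $X$, and then to verify the equation directly. For $x\in\mathbb{R}\setminus\{0\}$ the map $x\mapsto|x|$ is smooth with $\partial_x|x|=\sgn(x)$, so $(\partial_x|x|)^2=1$ and $\partial_x^2|x|=0$. Consequently, for any sufficiently smooth function $H(t,X,z)$, the chain rule yields $\partial_x^2 H(t,|x|,z)=(\partial_X^2H)(t,|x|,z)$ and $\partial_tH(t,|x|,z)=(\partial_tH)(t,|x|,z)$. Thus it suffices to verify, for each $j\in\{0,1,\textnormal{free}\}$, that the function obtained by replacing $|x|$ with a real variable $X$ satisfies the free Schr\"odinger equation in $X$ for all $X\in\mathbb{R}$.

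For $G_{\textnormal{free}}$ and $G_0$ this is the classical statement that the free Schr\"odinger heat kernel $\frac{1}{2\sqrt{i\pi t}}e^{-(X\mp z)^2/(4it)}$ satisfies $i\partial_tH=-\partial_X^2H$, which is a one-line computation by differentiating the exponential.

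The remaining case, $G_1$, is the main content of the lemma. My preferred route is to exploit the defining identity \eqref{Eq_Lambda} of $\Lambda$: completing the square in the exponent gives
\begin{equation*}
-\frac{(X+z)^2}{4it}=-\phi^2+\omega(X+z)+\omega^2it,\qquad\phi:=\frac{X+z}{2\sqrt{it}}+\omega\sqrt{it},
\end{equation*}
so that the product $\Lambda(\phi)e^{-(X+z)^2/(4it)}$ collapses via \eqref{Eq_Lambda} to $(1-\erf(\phi))\,e^{\omega(X+z)+\omega^2it}$. This rewriting removes the $\Lambda'$-term entirely and reduces the verification to differentiating $\erf(\phi)$ (using $\erf'=\frac{2}{\sqrt\pi}e^{-\bullet^{\,2}}$) and the simple factor $e^{\omega(X+z)+\omega^2it}$. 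Collecting terms, the $(1-\erf(\phi))$-contributions match immediately from the purely exponential factor, while the $e^{-\phi^2}$-contributions reduce (after substituting the explicit form of $\phi$) to the algebraic identity $\frac{\phi}{it\sqrt\pi}-\frac{\omega}{\sqrt{i\pi t}}=\frac{X+z}{2\sqrt\pi\,(it)^{3/2}}$, which holds by the very definition of $\phi$. Alternatively one can differentiate $\Lambda(\phi)\,e^{-(X+z)^2/(4it)}$ directly, using Lemma~\ref{lem_Lambda_Properties}\,(i) to eliminate $\Lambda'(\phi)$ in favor of $2\phi\Lambda(\phi)-2/\sqrt\pi$; this leads to the same cancellations.

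There is no real conceptual obstacle; the only care needed is bookkeeping of the various powers of $\sqrt{it}$ and of the sign conventions in $\phi$. The $1-\erf$ rewriting described above is what keeps the algebra short, as it sidesteps the cross-terms that otherwise arise between $\Lambda'(\phi)$ and the exponential prefactor.
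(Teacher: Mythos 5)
Your proposal is correct. The reduction to a smooth variable $X$ via $\partial_x|x|=\sgn(x)$, $(\partial_x|x|)^2=1$, $\partial_x^2|x|=0$ on $\mathbb{R}\setminus\{0\}$ is sound and matches what the paper does implicitly (the $\sgn(x)$ factors in the paper's formulas (\ref{Eq_Derivative_G0_2}), (\ref{Eq_Derivative_G1_2}) square to $1$ in the second derivative). The $G_0$ and $G_{\mathrm{free}}$ cases are handled identically. The genuine difference is in $G_1$: the paper differentiates $\Lambda(\phi)e^{-(|x|+z)^2/(4it)}$ directly, eliminating $\Lambda'$ via the ODE (\ref{Eq_Lambda_Derivative}) and recording the explicit derivatives (\ref{Eq_Derivative_G1}) --- which it needs again later when verifying the jump conditions in Theorem~\ref{satz_Green_function}. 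You instead complete the square in the exponent and use the defining identity (\ref{Eq_Lambda}) to collapse the product to $(1-\erf(\phi))\,e^{\omega(X+z)+\omega^2 it}$, after which the PDE is verified by differentiating $\erf$ and an elementary exponential; the final algebraic identity you state, $\frac{\phi}{it\sqrt{\pi}}-\frac{\omega}{\sqrt{i\pi t}}=\frac{X+z}{2\sqrt{\pi}(it)^{3/2}}$, does indeed reduce to the definition of $\phi$. This rewriting is a cleaner way to verify the PDE in isolation since it sidesteps the $\Lambda'$/prefactor cross-terms; the paper's more explicit route pays off later because the intermediate formulas for $\partial_x G_1$ are reused. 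You also note the paper's route as an alternative, so you have both in hand.
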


\begin{proof}
In order to verify \eqref{Eq_Part_Greenfunction_DGL} we compute the derivatives of the functions \eqref{Eq_Part_Greenfunction} explicitly. For $G_0$ we get
\begin{subequations}\label{Eq_Derivative_G0}
\begin{align}
\frac{\partial}{\partial t}G_0(t,x,z)&=\frac{i}{4t\sqrt{i\pi t}}\left(i-\frac{(|x|+z)^2}{2t}\right)e^{-\frac{(|x|+z)^2}{4it}}, \label{Eq_Derivative_G0_1} \\
\frac{\partial}{\partial x}G_0(t,x,z)&=-\sgn(x)\frac{|x|+z}{4it\sqrt{i\pi t}}e^{-\frac{(|x|+z)^2}{4it}}, \label{Eq_Derivative_G0_2} \\
\frac{\partial^2}{\partial x^2}G_0(t,x,z)&=\frac{1}{4t\sqrt{i\pi t}}\left(i-\frac{(|x|+z)^2}{2t}\right)e^{-\frac{(|x|+z)^2}{4it}}. \label{Eq_Derivative_G0_3}
\end{align}
\end{subequations}
For $G_1$ we use (\ref{Eq_Lambda_Derivative}) to obtain
\begin{subequations}\label{Eq_Derivative_G1}
\begin{align}
\frac{\partial}{\partial t}G_1(t,x,z)&=i\left(\omega^2\Lambda\left(\frac{|x|+z}{2\sqrt{it}}+\omega\sqrt{it}\right)+\frac{1}{it\sqrt{\pi}}\left(\frac{|x|+z}{2\sqrt{it}}-\omega\sqrt{it}\right)\right)e^{-\frac{(|x|+z)^2}{4it}}, \label{Eq_Derivative_G1_1} \\
\frac{\partial}{\partial x}G_1(t,x,z)&=\sgn(x)\left(\omega\Lambda\left(\frac{|x|+z}{2\sqrt{it}}+\omega\sqrt{it}\right)-\frac{1}{\sqrt{i\pi t}}\right)e^{-\frac{(|x|+z)^2}{4it}}, \label{Eq_Derivative_G1_2} \\
\frac{\partial^2}{\partial x^2}G_1(t,x,z)&=\left(\omega^2\Lambda\left(\frac{|x|+z}{2\sqrt{it}}+\omega\sqrt{it}\right)+\frac{1}{it\sqrt{\pi}}\left(\frac{|x|+z}{2\sqrt{it}}-\omega\sqrt{it}\right)\right)e^{-\frac{(|x|+z)^2}{4it}}. \label{Eq_Derivative_G1_3}
\end{align}
\end{subequations}
Finally, for $G_\text{free}$ we get, in a similar way as for $G_0$, the derivatives
\begin{subequations}\label{Eq_Derivative_Gfree}
\begin{align}
\frac{\partial}{\partial t}G_\text{free}(t,x,z)&=\frac{i}{4t\sqrt{i\pi t}}\left(i-\frac{(x-z)^2}{2t}\right)e^{-\frac{(x-z)^2}{4it}}, \label{Eq_Derivative_Gfree_1} \\
\frac{\partial}{\partial x}G_\text{free}(t,x,z)&=-\frac{x-z}{4it\sqrt{i\pi t}}e^{-\frac{(x-z)^2}{4it}}, \label{Eq_Derivative_Gfree_2} \\
\frac{\partial^2}{\partial x^2}G_\text{free}(t,x,z)&=\frac{1}{4t\sqrt{i\pi t}}\left(i-\frac{(x-z)^2}{2t}\right)e^{-\frac{(x-z)^2}{4it}}. \label{Eq_Derivative_Gfree_3}
\end{align}
\end{subequations}
In all three cases it is obvious that the differential equation \eqref{Eq_Part_Greenfunction_DGL} is satisfied.
\end{proof}

Next we will collect some elementary estimates of the functions $G_0$, $G_1$, and $G_\text{free}$, which will be needed throughout the paper.

\begin{lem}\label{cor_G_Estimate}
For every $t>0$, $x\in\mathbb{R}\setminus\{0\}$, and $z\in\mathbb{C}$ with $\Arg(z)\in[0,\frac{\pi}{2}]$ the following estimates for
the functions \eqref{Eq_Part_Greenfunction} hold:
\begin{subequations}\label{Eq_Estimate}
\begin{align}
\big|G_j(t,x,z)\big|&\leq c_j(t)\,e^{-\frac{\Im(z^2)}{4t}-\frac{|x|\Im(z)}{2t}},\qquad j\in\{0,1\}, \label{Eq_Estimate_2} \\
\big|G_\textnormal{free}(t,x,z)\big|&\leq c_\textnormal{free}(t)\,e^{-\frac{\Im(z^2)}{4t}+\frac{x\Im(z)}{2t}}, \label{Eq_Estimate_3}
\end{align}
\end{subequations}
where $c_0(t)=c_\textnormal{free}(t)=\frac{1}{2\sqrt{\pi t}}$ and $c_1(t)=\Lambda\big(\frac{\omega\sqrt{t}}{\sqrt{2}}\big)$. In particular,
the functions \eqref{Eq_Part_Greenfunction} satisfy the common estimate
\begin{equation}\label{Eq_Estimate_1}
\big|G_j(t,x,z)\big|\leq c_j(t)\,e^{-\frac{\Im(z^2)}{4t}+\frac{|x|\Im(z)}{2t}}, \qquad j\in\{0,1,\textnormal{free}\}.
\end{equation}
\end{lem}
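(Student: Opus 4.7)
The plan is to compute the modulus of each function by separating the real and imaginary parts of the exponent, and to control the additional $\Lambda$-factor in $G_1$ by property (iii) together with the monotonicity from part (iv) of Lemma~\ref{lem_Lambda_Properties}. Throughout, I would write $z=a+bi$ with $a,b\geq 0$, which is exactly the content of $\Arg(z)\in[0,\pi/2]$, and fix once and for all the branch $\sqrt{it}=\sqrt{t}\,(1+i)/\sqrt 2$, so that $1/(2\sqrt{it})=(1-i)/(2\sqrt{2t})$.

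I would begin with the purely exponential pieces $G_0$ and $G_\textnormal{free}$. Using $-1/(4it)=i/(4t)$ and expanding $(|x|+z)^2=(|x|+a)^2-b^2+2(|x|+a)bi$, the real part of the exponent is $-(|x|+a)b/(2t)$; invoking the identity $\Im(z^2)=2ab$, this is precisely $-\Im(z^2)/(4t)-|x|\Im(z)/(2t)$. Since $|2\sqrt{i\pi t}|=2\sqrt{\pi t}$, this proves \eqref{Eq_Estimate_2} for $j=0$ with $c_0(t)=1/(2\sqrt{\pi t})$. The identical computation applied to $(x-z)^2$ produces the real part $(x-a)b/(2t)=-\Im(z^2)/(4t)+x\Im(z)/(2t)$, which gives \eqref{Eq_Estimate_3}. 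For $G_1$ the exponential factor is bounded exactly as for $G_0$; it remains to estimate the $\Lambda$-factor. By Lemma~\ref{lem_Lambda_Properties}(iii), $|\Lambda(w)|\leq \Lambda(\Re(w))$, and a direct calculation with the chosen branch of $\sqrt{it}$ yields
\begin{equation*}
\Re\!\left(\frac{|x|+z}{2\sqrt{it}}+\omega\sqrt{it}\right)=\frac{|x|+\Re(z)+\Im(z)}{2\sqrt{2t}}+\frac{\omega\sqrt{t}}{\sqrt 2}.
\end{equation*}
The hypothesis $\Arg(z)\in[0,\pi/2]$ makes the first summand non-negative, and since $\Lambda$ is monotonically decreasing on $\mathbb R$ by Lemma~\ref{lem_Lambda_Properties}(iv), the $\Lambda$-factor is dominated by $\Lambda(\omega\sqrt{t}/\sqrt 2)=c_1(t)$. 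Multiplying the two bounds gives \eqref{Eq_Estimate_2} for $j=1$.

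The common estimate \eqref{Eq_Estimate_1} is then immediate: for $j\in\{0,1\}$ the assumption $\Im(z)\geq 0$ makes the replacement $-|x|\Im(z)\leadsto+|x|\Im(z)$ only weaken the bound, while for $j=\textnormal{free}$ the trivial inequality $x\leq|x|$ combined with $\Im(z)\geq 0$ converts \eqref{Eq_Estimate_3} into \eqref{Eq_Estimate_1}. No step is a genuine obstacle; the only delicate point is to fix a single branch of $\sqrt{it}$ throughout and to recognize that the assumption on $\Arg(z)$ is precisely what is needed to keep both the sign of $\Im(z)$ in the exponential bound and the sign of the argument of $\Lambda$ under control.
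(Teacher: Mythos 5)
Your proof is correct and follows the same strategy as the paper's: reading off the real part of the exponent to bound $|G_0|$ and $|G_{\text{free}}|$, and using Lemma~\ref{lem_Lambda_Properties}(iii) followed by monotonicity (iv) with the sign condition $\Re(z),\Im(z)\geq 0$ to control the $\Lambda$-factor in $G_1$. The only difference is that you spell out the exponent computations that the paper declares ``obvious''; the substance is identical.
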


\begin{proof}
The estimates \eqref{Eq_Estimate_2} and \eqref{Eq_Estimate_3} for $G_0$ and $G_\text{free}$ are obvious. For the estimate \eqref{Eq_Estimate_2}
of $G_1$ we use Lemma \ref{lem_Lambda_Properties} (iii) and (iv) to get
\begin{equation*}
\left|\Lambda\left(\frac{|x|+z}{2\sqrt{it}}+\omega\sqrt{it}\right)\right|\leq\Lambda\left(\frac{|x|+\Re(z)+\Im(z)}{2\sqrt{2t}}
+\frac{\omega\sqrt{t}}{\sqrt{2}}\right)\leq\Lambda\left(\frac{\omega\sqrt{t}}{\sqrt{2}}\right),
\end{equation*}
where the monotonicity of $\Lambda$ is applicable since $\Re(z),\Im(z)\geq 0$ due to $\Arg(z)\in[0,\frac{\pi}{2}]$. Finally,
the estimate \eqref{Eq_Estimate_1} follows immediately from \eqref{Eq_Estimate} by further estimating the exponents.
\end{proof}

Now we turn to our main objective in this section and introduce the Green's function
\begin{equation}\label{Eq_Greenfunction_decomposition}
\begin{split}
G(t,x,y)&=\mu_+^{(x,y)}G_1(t,x,|y|;\omega_+)+\mu_-^{(x,y)}G_1(t,x,|y|;\omega_-) \\
&\qquad +\mu_0^{(x,y)}G_0(t,x,|y|)+G_\text{free}(t,x,y),\qquad t>0,\,x,y\in\mathbb{R}\setminus\{0\},
\end{split}
\end{equation}
which is expressed in terms of the functions \eqref{Eq_Part_Greenfunction} and we have added the additional argument
$\omega_\pm$ in $G_1$ to emphasize the dependence of the parameter $\omega$ in \eqref{Eq_Part_Greenfunction1}.
The function in \eqref{Eq_Greenfunction_decomposition} coincides with the Green's function \eqref{Eq_Greenfunction} mentioned in
the Introduction.
We prove in Theorem~\ref{satz_Green_function} below that for a proper choice of coefficients $\mu_\pm$ and $\mu_0$
the function \eqref{Eq_Greenfunction_decomposition} satisfies the differential equation \eqref{Eq_Schroedinger_1}
as well as the jump condition \eqref{Eq_Schroedinger_2} for a fixed unitary matrix $J$. The connection to the initial value \eqref{Eq_Schroedinger_3} is postponed to
Lemma \ref{lem_Psi} and Theorem~\ref{satz_Wave_function} in Section~\ref{sec_Solution_of_the_Schroedinger_equation}, where
the precise meaning of the integral \eqref{Eq_Green_function_integral_sec2} is clarified first.

Next we provide the coefficients $\omega_\pm$ and the piecewise constant functions $\mu_\pm$ and $\mu_0$ explicitly in
terms of the unitary $2\times 2$-matrix $J$ in \eqref{Eq_Schroedinger_2}. Note that
\begin{equation}\label{Eq_General_unitary_matrix}
J=e^{i\phi}\mmatrix{\alpha}{-\bar\beta}{\beta}{\bar\alpha},
\end{equation}
with parameters $\phi\in[0,\pi)$ and $\alpha,\beta\in\mathbb{C}$ satisfying $|\alpha|^2+|\beta|^2=1$.
It is convenient to use
\begin{subequations}\label{Eq_eta}
\begin{align}
\eta^{(x,y)}&:=\frac{1}{\sqrt{1-\Re(\alpha)^2}}\left\{\begin{array}{ll} -\Im(\alpha), & \text{if }x,y>0, \\ -i\bar\beta, & \text{if }x>0,\,y<0, \\ i\beta, & \text{if }x<0,\,y>0, \\ \Im(\alpha), & \text{if }x,y<0, \end{array}\right. \qquad\text{if }|\Re(\alpha)|\neq 1, \label{Eq_eta1} \\
\eta^{(x,y)}&:= 0,\hspace{7.95cm}\text{if }|\Re(\alpha)|=1, \label{Eq_eta2}
\end{align}
\end{subequations}
the step function
$$\Theta(x)=\left\{\begin{array}{ll} 1, & \text{if }x>0, \\ 0, & \text{if }x<0, \end{array}\right.$$
and to distinguish the following three cases.

\medskip
\noindent
\textbf{Case I}: If $\Re(\alpha)\neq-\cos(\phi)$, then
\begin{equation*}
\omega_\pm=\frac{-\sin(\phi)\pm\sqrt{1-\Re(\alpha)^2}}{\cos(\phi)+\Re(\alpha)},\quad\mu_\pm^{(x,y)}=-\frac{\omega_\pm}{2}\big(\Theta(xy)\pm\eta^{(x,y)}\big),\quad\mu_0^{(x,y)}=\sgn(xy).
\end{equation*}

\noindent
\textbf{Case II}: If $\Re(\alpha)=-\cos(\phi)\neq-1$, then $\omega_-=\mu_-^{(x,y)}=0$ and
\begin{equation*}
\omega_+=\cot(\phi),\quad\mu_+^{(x,y)}=-\frac{\omega_+}{2}\big(\Theta(xy)+\eta^{(x,y)}\big),\quad\mu_0^{(x,y)}=\eta^{(x,y)}-\Theta(-xy).
\end{equation*}

\noindent
\textbf{Case III}: If $\Re(\alpha)=-\cos(\phi)=-1$, then $\omega_\pm=\mu_\pm^{(x,y)}=0$ and $\mu_0^{(x,y)}=-1$.

\medskip

These three cases correspond to the rank of the matrix $I+J$ on the right hand side of the jump conditions \eqref{Eq_Schroedinger_2} or
\eqref{Eq_Jump_condition_Greenfunction}.
More precisely, in Case~I we have $\rank(I+J)=2$, in Case~II we have $\rank(I+J)=1$, and finally, in Case~III we have $\rank(I+J)=0$.

\begin{satz}\label{satz_Green_function}
For every fixed $y\in\mathbb{R}\setminus\{0\}$ the Green's function \eqref{Eq_Greenfunction_decomposition} satisfies the differential equation
\begin{equation}\label{Eq_DGL_Greenfunction}
i\frac{\partial}{\partial t}G(t,x,y)=-\frac{\partial^2}{\partial x^2}G(t,x,y),\qquad t>0,\,x\in\mathbb{R}\setminus\{0\},
\end{equation}
as well as the jump condition
\begin{equation}\label{Eq_Jump_condition_Greenfunction}
(I-J)\vvect{G(t,0^+,y)}{G(t,0^-,y)}=i(I+J)\vvect{\frac{\partial}{\partial x}G(t,0^+,y)}{-\frac{\partial}{\partial x}G(t,0^-,y)},\qquad t>0.
\end{equation}
\end{satz}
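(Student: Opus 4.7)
The plan is to establish the two assertions separately. For the Schr\"odinger equation (\ref{Eq_DGL_Greenfunction}) the argument will be short: for fixed $y\in\mathbb{R}\setminus\{0\}$ the coefficients $\mu_\pm^{(x,y)}$ and $\mu_0^{(x,y)}$ depend on $x$ only through $\sgn(x)$ and are independent of $t$, so on each connected component of $\mathbb{R}\setminus\{0\}$ they are constant. Since each building block $G_0$, $G_1$ (with $\omega=\omega_\pm$), and $G_\textnormal{free}$ solves the free Schr\"odinger equation on $\mathbb{R}\setminus\{0\}$ by Lemma~\ref{lem_Part_Greenfunction_DGL}, the linear combination (\ref{Eq_Greenfunction_decomposition}) will inherit the property immediately.

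For the jump condition (\ref{Eq_Jump_condition_Greenfunction}) I would first substitute the explicit boundary values of $G$ and $\partial_xG$ at $x=0^\pm$ obtained from (\ref{Eq_Derivative_G0})--(\ref{Eq_Derivative_Gfree}). Since $|x|+|y|=|y|$ and $(x-y)^2=y^2$ at $x=0$, every resulting term carries the common factor $e^{-|y|^2/(4it)}$, which can be divided out of both sides. What remains is a linear relation among
\[
\Lambda\Big(\tfrac{|y|}{2\sqrt{it}}+\omega_+\sqrt{it}\Big),\qquad \Lambda\Big(\tfrac{|y|}{2\sqrt{it}}+\omega_-\sqrt{it}\Big),\qquad \tfrac{1}{\sqrt{i\pi t}},\qquad \tfrac{|y|}{4it\sqrt{i\pi t}},
\]
whose coefficients depend only on $\sgn(y)$ and on the entries $\phi$, $\alpha$, $\beta$ of $J$ via the parametrization (\ref{Eq_General_unitary_matrix}).

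The verification will then split into the three cases $\rank(I+J)\in\{0,1,2\}$ that are encoded in the definitions of $\omega_\pm$, $\mu_\pm^{(x,y)}$ and $\mu_0^{(x,y)}$. Case~III ($J=-I$) will be immediate: the jump condition reduces to the Dirichlet condition $G(t,0^\pm,y)=0$, and with $\omega_\pm=\mu_\pm=0$, $\mu_0=-1$ one checks at once that $G=-G_0+G_\textnormal{free}$ vanishes at $x=0$. In Cases~I and II I would write out both components of the vector equation $(I-J)\vvect{G(t,0^+,y)}{G(t,0^-,y)}=i(I+J)\vvect{\partial_xG(t,0^+,y)}{-\partial_xG(t,0^-,y)}$ using (\ref{Eq_General_unitary_matrix}), and separate the coefficients of the two $\Lambda$-terms from the remaining Gaussian-polynomial terms. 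Requiring the $\Lambda$-coefficients to vanish will force $\omega_\pm$ to be the roots of a quadratic in $\omega$ whose coefficients involve $\phi$ and $\Re(\alpha)$; solving that quadratic should yield exactly the values in Case~I, degenerating to the single root $\omega_+=\cot(\phi)$ in Case~II when $\Re(\alpha)=-\cos(\phi)\neq -1$. Matching the remaining coefficients will then pin down $\mu_\pm^{(x,y)}$ and $\mu_0^{(x,y)}$, and the four possible sign patterns of $(x,y)$ are precisely what produces the four-piece definition of $\eta^{(x,y)}$ in (\ref{Eq_eta}).

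The hard part will be purely bookkeeping: three rank cases, four sign combinations of $(\sgn(x),\sgn(y))$, two vector components of the jump condition, and four building blocks must all be tracked in parallel. Once the common Gaussian factor has been divided out, however, the problem should reduce to an elementary linear-algebra matching whose only nontrivial algebraic inputs are the identification of $\omega_\pm$ with the roots of the quadratic above and the unitarity relation $|\alpha|^2+|\beta|^2=1$.
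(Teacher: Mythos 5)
Your plan follows the paper's proof essentially step for step: the differential equation via Lemma~\ref{lem_Part_Greenfunction_DGL} and sign-constancy of the coefficients on each half-line, and the jump condition by evaluating $G$ and $\partial_x G$ at $x=0^\pm$, cancelling the common Gaussian $e^{-y^2/(4it)}$, matching coefficients of the independent terms $\Lambda\bigl(\tfrac{|y|}{2\sqrt{it}}+\omega_\pm\sqrt{it}\bigr)$, $\tfrac{1}{\sqrt{i\pi t}}$, and $\tfrac{|y|}{4it\sqrt{i\pi t}}$, and then checking the three cases determined by $\rank(I+J)$, with $\omega_\pm$ indeed arising as roots of the characteristic polynomial of the pencil $(I-J)-i\omega(I+J)$. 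The one organizational device the paper uses that you do not anticipate is to package the four sign-combinations of $(\sgn x,\sgn y)$ into the $2\times2$ matrices $M_j$ and $N$ of \eqref{Eq_Matrices}, so that the unitarity relation $|\alpha|^2+|\beta|^2=1$ becomes $N^2=I$ and what you rightly flag as ``the hard part'' of the bookkeeping collapses into a handful of matrix identities.
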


\begin{proof}
Note first that the coefficients $\mu_\pm^{(x,y)}$ and $\mu_0^{(x,y)}$ in the representation \eqref{Eq_Greenfunction_decomposition} of the function $G$
only depend on the signs
of $x$ and $y$. In particular, the coefficients are constant on the half lines $x>0$ and $x<0$, and hence it
follows from Lemma \ref{lem_Part_Greenfunction_DGL} that the function $G$ in \eqref{Eq_Greenfunction_decomposition} is a solution of the differential equation \eqref{Eq_DGL_Greenfunction}.

\medskip

In the following we will verify that the jump condition \eqref{Eq_Jump_condition_Greenfunction} is satisfied.
Using \eqref{Eq_Derivative_G0_2}, \eqref{Eq_Derivative_G1_2}, and \eqref{Eq_Derivative_Gfree_2} we find that the spatial derivative of the function $G$
is given by
\begin{align*}
\frac{\partial}{\partial x}G(t,x,y)&=\mu_+^{(x,y)}\sgn(x)\left(\omega_+\Lambda\left(\frac{|x|+|y|}{2\sqrt{it}}+\omega_+\sqrt{it}\right)-\frac{1}{\sqrt{i\pi t}}\right)e^{-\frac{(|x|+|y|)^2}{4it}} \\
&\quad+\mu_-^{(x,y)}\sgn(x)\left(\omega_-\Lambda\left(\frac{|x|+|y|}{2\sqrt{it}}+\omega_-\sqrt{it}\right)-\frac{1}{\sqrt{i\pi t}}\right)e^{-\frac{(|x|+|y|)^2}{4it}} \\
&\quad-\frac{1}{4it\sqrt{i\pi t}}\left(\mu_0^{(x,y)}\sgn(x)\big(|x|+|y|\big)e^{-\frac{(|x|+|y|)^2}{4it}}+(x-y)e^{-\frac{(x-y)^2}{4it}}\right).
\end{align*}
For the jump condition \eqref{Eq_Jump_condition_Greenfunction} we have to evaluate $G$ and $\frac{\partial}{\partial x}G$ at $x=0^\pm$.
As in \eqref{Eq_Jump_condition_Greenfunction} this will be done in a vector form, where the first entry is the limit $x=0^+$ and the second entry the limit $x=0^-$.
We have
\begin{align*}
\vvect{G(t,0^+,y)}{G(t,0^-,y)}&=\left(\vvect{\mu_+^{(0^+,y)}}{\mu_+^{(0^-,y)}}\Lambda\left(\frac{|y|}{2\sqrt{it}}+\omega_+\sqrt{it}\right)+\vvect{\mu_-^{(0^+,y)}}{\mu_-^{(0^-,y)}}\Lambda\left(\frac{|y|}{2\sqrt{it}}+\omega_-\sqrt{it}\right)\right. \\
&\quad\left.+\frac{1}{2\sqrt{i\pi t}}\vvect{\mu_0^{(0^+,y)}+1}{\mu_0^{(0^-,y)}+1}\right)e^{-\frac{y^2}{4it}}, \\
\vvect{\frac{\partial}{\partial x}G(t,0^+,y)}{-\frac{\partial}{\partial x}G(t,0^-,y)}
&=\left(\vvect{\mu_+^{(0^+,y)}}{\mu_+^{(0^-,y)}}\omega_+\Lambda\left(\frac{|y|}{2\sqrt{it}}+\omega_+\sqrt{it}\right)\right.\\
&\quad\left.+\vvect{\mu_-^{(0^+,y)}}{\mu_-^{(0^-,y)}}\omega_-\Lambda\left(\frac{|y|}{2\sqrt{it}}+\omega_-\sqrt{it}\right)\right. \\
&\quad\left.-\frac{1}{\sqrt{i\pi t}}\vvect{\mu_+^{(0^+,y)}+\mu_-^{(0^+,y)}}{\mu_+^{(0^-,y)}+\mu_-^{(0^-,y)}}-\frac{|y|}{4it\sqrt{i\pi t}}\vvect{\mu_0^{(0^+,y)}-\sgn(y)}{\mu_0^{(0^-,y)}+\sgn(y)}\right)e^{-\frac{y^2}{4it}},
\end{align*}
and since \eqref{Eq_Jump_condition_Greenfunction} has to be satisfied for all $y\in\mathbb{R}\setminus\{0\}$ it suffices to compare and match the coefficients
corresponding to the terms
\begin{equation*}
 \Lambda\left(\frac{|y|}{2\sqrt{it}}+\omega_\pm\sqrt{it}\right),\quad\frac{1}{2\sqrt{i\pi t}},\quad\text{and}\quad\frac{|y|}{4it\sqrt{i\pi t}},
\end{equation*}
which leads to the following four equations
\begin{subequations}
\begin{align*}
(\textrm{A}_\pm):&\;\;(I-J)\vvect{\mu_\pm^{(0^+,y)}}{\mu_\pm^{(0^-,y)}}=i\omega_\pm(I+J)\vvect{\mu_\pm^{(0^+,y)}}{\mu_\pm^{(0^-,y)}},  \\
(\textrm{B})\,\,:&\;\;(I-J)\vvect{\mu_0^{(0^+,y)}+1}{\mu_0^{(0^-,y)}+1}=-2i(I+J)\vvect{\mu_+^{(0^+,y)}+\mu_-^{(0^+,y)}}{\mu_+^{(0^-,y)}+\mu_-^{(0^-,y)}}, \\
(\textrm{C})\,\,:&\;\;\vvect{0}{0}=(I+J)\vvect{\mu_0^{(0^+,y)}-\sgn(y)}{\mu_0^{(0^-,y)}+\sgn(y)}.
\end{align*}
\end{subequations}
Since the variable $y$ only appears as $\sgn(y)$ each equation splits up in one for $y>0$ and one for $y<0$.
We will consider this by writing $(\textrm{A}_\pm)$, $(\textrm{B})$, and $(\textrm{C})$ as matrix equations, where the first column is for $y>0$ and the second column for $y<0$. For a shorter notation we will use the matrices
\begin{equation}\label{Eq_Matrices}
\mathbbm{1}:=\mmatrix{1}{1}{1}{1},\quad N:=\mmatrix{\eta^{(0^+,0^+)}}{\eta^{(0^+,0^-)}}{\eta^{(0^-,0^+)}}{\eta^{(0^-,0^-)}},\quad M_j:=\mmatrix{\mu_j^{(0^+,0^+)}}{\mu_j^{(0^+,0^-)}}{\mu_j^{(0^-,0^+)}}{\mu_j^{(0^-,0^-)}},
\end{equation}
where $j\in\{0,\pm\}$. Note that the matrix $N$ satisfies the identity
\begin{equation}\label{Eq_Eta_identity}
\sqrt{1-\Re(\alpha)^2}\,N=\mmatrix{-\Im(\alpha)}{-i\bar\beta}{i\beta}{\Im(\alpha)}
\end{equation}
by \eqref{Eq_eta1} for $|\Re(\alpha)|\neq 1$ and also for $|\Re(\alpha)|=1$, since then $\Im(\alpha)=\beta=0$ due to $|\alpha|^2+|\beta|^2=1$.
From \eqref{Eq_Eta_identity} and $|\alpha|^2+|\beta|^2=1$ it immediately follows that
\begin{equation*}
N^2=\frac{1}{1-\Re(\alpha)^2}\mmatrix{-\Im(\alpha)}{-i\bar\beta}{i\beta}{\Im(\alpha)}^2=\frac{\Im(\alpha)^2+|\beta|^2}{1-\Re(\alpha)^2}\;I=I,\qquad\text{if }|\Re(\alpha)|\neq 1,
\end{equation*}
and, consequently,
\begin{equation}\label{Eq_etasquare}
(N+I)(N-I)=N^2-N+N-I=N^2-I=0,\qquad\text{if }|\Re(\alpha)|\neq 1,
\end{equation}
to which we will refer throughout the proof. With the help of the matrices \eqref{Eq_Matrices} we now rewrite the equations $(\textrm{A}_\pm)$, $(\textrm{B})$,
and $(\textrm{C})$ above in the matrix form
\begin{align*}
(\textrm{A}_\pm):&\;\;(I-J)M_\pm=i\omega_\pm(I+J)M_\pm, \\
(\textrm{B})\,\,:&\;\;(I-J)(M_0+\mathbbm{1})=-2i(I+J)(M_++M_-), \\
(\textrm{C})\,\,:&\;\;0=(I+J)(M_0+\mathbbm{1}-2I).
\end{align*}
Plugging in the matrix $J$ from \eqref{Eq_General_unitary_matrix} and multiplying by $e^{-i\phi}$ these equations turn into
\begin{subequations}\label{Eq_ABC}
\begin{align}
(\textrm{A}_\pm):&\;\;\mmatrix{e^{-i\phi}-\alpha}{\bar\beta}{-\beta}{e^{-i\phi}-\bar\alpha}M_\pm=i\omega_\pm\mmatrix{e^{-i\phi}+\alpha}{-\bar\beta}{\beta}{e^{-i\phi}+\bar\alpha}M_\pm, \label{Eq_A} \\
(\textrm{B})\,\,\,:&\;\;\mmatrix{e^{-i\phi}-\alpha}{\bar\beta}{-\beta}{e^{-i\phi}-\bar\alpha}(M_0+\mathbbm{1})=-2i\mmatrix{e^{-i\phi}+\alpha}{-\bar\beta}{\beta}{e^{-i\phi}+\bar\alpha}(M_++M_-), \label{Eq_B} \\
(\textrm{C})\,\,\,:&\;\;0=\mmatrix{e^{-i\phi}+\alpha}{-\bar\beta}{\beta}{e^{-i\phi}+\bar\alpha}(M_0+\mathbbm{1}-2I). \label{Eq_C}
\end{align}
\end{subequations}
In the following we will discuss the three cases above Theorem~\ref{satz_Green_function} separately and verify that in each case with the proper choice of
the coefficients $\omega_\pm$ and $\mu_\pm,\mu_0$ the
equations $(\textrm{A}_\pm)$, $(\textrm{B})$,
and $(\textrm{C})$ are satisfied, that is, the jump condition \eqref{Eq_Jump_condition_Greenfunction} holds.

\medskip
\noindent
\textbf{Case I}. Observe first that the equation \eqref{Eq_C} is satisfied since $\mu_0^{(x,y)}=\sgn(xy)$ in this case, and hence we conclude
$M_0=2I-\mathbbm{1}$. Next we use $|\alpha|^2+|\beta|^2=1$ to compute
\begin{equation*}
\det\mmatrix{e^{-i\phi}+\alpha}{-\bar\beta}{\beta}{e^{-i\phi}+\bar\alpha}=2e^{-i\phi}\big(\cos(\phi)+\Re(\alpha)\big)\neq 0,
\end{equation*}
where we also used the assumption $\Re(\alpha)\neq-\cos(\phi)$ in Case I.
It follows that the matrix on the right hand side of $(\textrm{A}_\pm)$ and $(\textrm{B})$ is invertible with the inverse
\begin{equation*}
\mmatrix{e^{-i\phi}+\alpha}{-\bar\beta}{\beta}{e^{-i\phi}+\bar\alpha}^{-1}=\frac{e^{i\phi}}{2(\cos(\phi)+\Re(\alpha))}
\mmatrix{e^{-i\phi}+\bar\alpha}{\bar\beta}{-\beta}{e^{-i\phi}+\alpha},
\end{equation*}
and this leads to
\begin{align*}
&\mmatrix{e^{-i\phi}+\alpha}{-\bar\beta}{\beta}{e^{-i\phi}+\bar\alpha}^{-1}\mmatrix{e^{-i\phi}-\alpha}{\bar\beta}{-\beta}{e^{-i\phi}-\bar\alpha} \\
&\hspace{2cm}=\frac{-i}{\cos(\phi)+\Re(\alpha)}\mmatrix{\sin(\phi)+\Im(\alpha)}{i\bar\beta}{-i\beta}{\sin(\phi)-\Im(\alpha)} \\
&\hspace{2cm}=\frac{-i}{\cos(\phi)+\Re(\alpha)}\bigl(\sin(\phi)\,I-\sqrt{1-\Re(\alpha)^2}\,N\bigr),
\end{align*}
where in the last line we used the identity \eqref{Eq_Eta_identity}. Hence the equations \eqref{Eq_A} and \eqref{Eq_B} turn into
\begin{align*}
(\textrm{A}_\pm):&\;\;\frac{\sin(\phi)\,I-\sqrt{1-\Re(\alpha)^2}\,N}{\cos(\phi)+\Re(\alpha)}M_\pm=-\omega_\pm M_\pm, \\
(\textrm{B})\,\,:&\;\;\frac{\sin(\phi)\,I-\sqrt{1-\Re(\alpha)^2}\,N}{\cos(\phi)+\Re(\alpha)}(M_0+\mathbbm{1})=2(M_++M_-).
\end{align*}
Using the explicit form $\omega_\pm=\frac{-\sin(\phi)\pm\sqrt{1-\Re(\alpha)^2}}{\cos(\phi)+\Re(\alpha)}$ in $(\textrm{A}_\pm)$ and $M_0=2I-\mathbbm{1}$ in $(\textrm{B})$
these equations reduce to
\begin{align*}
(\textrm{A}_\pm):&\;\;\sqrt{1-\Re(\alpha)^2}(N\mp I)M_\pm=0, \\
(\textrm{B})\,\,:&\;\;\frac{\sin(\phi)\,I-\sqrt{1-\Re(\alpha)^2}\,N}{\cos(\phi)+\Re(\alpha)}=M_++M_-.
\end{align*}
Since we treat Case I we have $\mu_\pm^{(x,y)}=-\frac{\omega_\pm}{2}\big(\Theta(xy)\pm\eta^{(x,y)}\big)$ and from that we conclude
\begin{equation}\label{mmm}
M_\pm=-\frac{\omega_\pm}{2}(I\pm N).
\end{equation}
In particular, this yields
\begin{equation*}
M_++M_-=-\frac{(\omega_++\omega_-)I+(\omega_+-\omega_-)N}{2}=\frac{\sin(\phi)I-\sqrt{1-\Re(\alpha)^2}\,N}{\cos(\phi)+\Re(\alpha)},
\end{equation*}
which shows that equation $(\textrm{B})$ is valid.
It remains to check $(\textrm{A}_\pm)$. These equations are obviously valid if $|\Re(\alpha)|=1$ and if
$|\Re(\alpha)|\neq 1$ they follow from the identities \eqref{Eq_etasquare} and \eqref{mmm}.

\medskip
\noindent
\textbf{Case II}.  Here we assume $\Re(\alpha)=-\cos(\phi)\neq -1$, which implies, in particular, $\phi\neq 0$ and consequently
$\sin(\phi)\neq 0$. The matrices in the equations $(\textrm{A}_\pm)$, $(\textrm{B})$,
and $(\textrm{C})$ in \eqref{Eq_ABC} now have the form
\begin{align*}
\mmatrix{e^{-i\phi}-\alpha}{\bar\beta}{-\beta}{e^{-i\phi}-\bar\alpha}&=\big(2\cos(\phi)-i\sin(\phi)\big)I+i\mmatrix{-\Im(\alpha)}{-i\bar\beta}{i\beta}{\Im(\alpha)} \\
&=-i\sin(\phi)\big((2i\cot(\phi)+1)I-N\big), \\
\mmatrix{e^{-i\phi}+\alpha}{-\bar\beta}{\beta}{e^{-i\phi}+\bar\alpha}&=-i\sin(\phi)I-i\mmatrix{-\Im(\alpha)}{-i\bar\beta}{i\beta}{\Im(\alpha)} \\
&=-i\sin(\phi)(I+N),
\end{align*}
where in both cases we used \eqref{Eq_Eta_identity} and $\sqrt{1-\Re(\alpha)^2}=\sin(\phi)$, because $\Re(\alpha)=-\cos(\phi)$. Using this in \eqref{Eq_ABC} leads to
\begin{align*}
(\textrm{A}_\pm):&\;\;\big((2i\cot(\phi)+1)I-N\big)M_\pm=i\omega_\pm(I+N)M_\pm, \\
(\textrm{B})\,\,:&\;\;\big((2i\cot(\phi)+1)I-N\big)(M_0+\mathbbm{1})=-2i(I+N)(M_++M_-), \\
(\textrm{C})\,\,:&\;\;0=(I+N)(M_0+\mathbbm{1}-2I).
\end{align*}
Since in Case II we have $\mu_-^{(x,y)}=0$, that is, $M_-=0$, the equation $(\textrm{A}_-)$ is trivially satisfied. Furthermore,
with our choice $\omega_+=\cot(\phi)$ the equation $(\textrm{A}_+)$ reduces to
\begin{equation*}
(\textrm{A}_+):\;\;(i\cot(\phi)+1)(I-N)M_+=0.
\end{equation*}
By our choice of $\mu_+^{(x,y)}$ we have $M_+=-\frac{\omega_+}{2}(I+N)$ as in the previous case (cf. \eqref{mmm}) and hence we conclude together with
\eqref{Eq_etasquare} that equation $(\textrm{A}_+)$ is valid; note that we can apply \eqref{Eq_etasquare} since $\Re(\alpha)\neq -1$ by the assumption in Case II and also
$\Re(\alpha)=-\cos(\phi)\neq 1$ as $\phi\in[0,\pi)$.
Next, we observe that also equation $(\textrm{C})$ holds by \eqref{Eq_etasquare} and $\mu_0^{(x,y)}=\eta^{(x,y)}-\Theta(-xy)$, which gives $M_0=N-\mathbbm{1}+I$.
In order to check $(\textrm{B})$, we plug in the above values for $M_0$ and $M_\pm$ and obtain
\begin{equation*}
(\textrm{B}):\;\;\big(1+i\cot(\phi)\big)(I-N)(N+I)=0,
\end{equation*}
which holds by \eqref{Eq_etasquare}.

\medskip
\noindent
\textbf{Case III}.
Here we assume $\Re(\alpha)=-\cos(\phi)=-1$ and hence $\Im(\alpha)=\beta=\phi=0$ follows from the condition $|\alpha|^2+|\beta|^2=1$.
Therefore, the equations $(\textrm{A}_\pm)$, $(\textrm{B})$,
and $(\textrm{C})$ in \eqref{Eq_ABC} have the particularly simple form
\begin{align*}
(\textrm{A}_\pm):&\;\;2M_\pm=0, \\
(\textrm{B})\,:&\;\;M_0+\mathbbm{1}=0, \\
(\textrm{C})\,:&\;\;0=0,
\end{align*}
and are all obviously satisfied by the definition of the coefficients in Case III.
\end{proof}

\section{Special cases of generalized point interactions and their Green's functions}\label{Special cases }

In this section we consider some particular generalized point interactions and derive the explicit form of the Green's function in these situations.
As an almost trivial case we start with the free particle in Example~\ref{bsp_Free}, discuss the well-known $\delta$ and $\delta'$-interactions afterwards
in Example~\ref{bsp_Delta} and Example~\ref{bsp_Deltaprime}, respectively, and in Examples~\ref{bsp_Dirichlet}--\ref{bsp_Robin} we treat decoupled systems with Dirichlet,
Neumann, and Robin boundary conditions at the origin.
In each of the examples we first provide the corresponding matrix $J$ for the interface conditions \eqref{Eq_Schroedinger_2} with parameters $\phi,\alpha,\beta$
as in \eqref{Eq_General_unitary_matrix},
then we determine which of the Cases I--III above Theorem~\ref{satz_Green_function} appears, and finally we compute the coefficients in the
Green's function \eqref{Eq_Greenfunction} or \eqref{Eq_Greenfunction_decomposition}.
The special Green's functions in this section are known from the mathematical and physical literature.

\begin{bsp}[Free particle]\label{bsp_Free}
The wave function corresponding to
a free particle is continuous with continuous first derivative and hence at the point $x=0$ we have
\begin{equation*}
\Psi(t,0^-)=\Psi(t,0^+)\qquad\text{and}\qquad\frac{\partial}{\partial x}\Psi(t,0^-)=\frac{\partial}{\partial x}\Psi(t,0^+),\qquad t>0.
\end{equation*}
These continuity conditions are described in \eqref{Eq_Schroedinger_2} if we consider the matrix
\begin{equation*}
J=\mmatrix{0}{1}{1}{0}.
\end{equation*}
This matrix is of the form \eqref{Eq_General_unitary_matrix} with $\alpha=0$, $\beta=-i$, and $\phi=\frac{\pi}{2}$.
In this situation the coefficient $\eta^{(x,y)}$ in \eqref{Eq_eta1} is
\begin{equation*}
\eta^{(x,y)}=\left\{\begin{array}{ll} 0, & \textnormal{if }x,y>0, \\ 1, & \textnormal{if }x>0,\,y<0, \\ 1, & \textnormal{if }x<0,\,y>0, \\ 0, & \textnormal{if }x,y<0,
\end{array}\right.=\Theta(-xy).
\end{equation*}
Since we are in Case II the coefficients of the corresponding Green function in \eqref{Eq_Greenfunction} have the explicit form
\begin{align*}
\omega_-=0,\hspace{2.55cm}\mu_-^{(x,y)}&=0, \\
\omega_+=\cot\Big(\frac{\pi}{2}\Big)=0,\qquad\mu_+^{(x,y)}&=-\frac{\omega_+}{2}\big(\Theta(xy)+\eta^{(x,y)}\big)=0, \\
\mu_0^{(x,y)}&=\eta^{(x,y)}-\Theta(-xy)=0.
\end{align*}
Therefore, the Green's function of the free particle is given by
\begin{equation*}
G(t,x,y)=\frac{1}{2\sqrt{i\pi t}}e^{-\frac{(x-y)^2}{4it}}.
\end{equation*}
\end{bsp}

In the next example we treat the classical $\delta$-point interaction located at the origin. Such singular potentials
were studied intensively in the mathematical and physical literature; we refer the interested reader to the standard monograph \cite{ALB} for
a detailed treatment and further references. The particular Green's function that appears below can also be found (sometimes in a slightly different form)
in the papers \cite{C09,GaSc86,Ma89}.

\begin{bsp}[$\delta$-potential]\label{bsp_Delta}
We consider the standard $\delta$-interaction of strength $2c\in\mathbb{R}\setminus\{0\}$ located at the point $x=0$.
This situation is described by the formal Schr\"odinger equation
\begin{equation*}
i\frac{\partial}{\partial t}\Psi(t,x)=\Big(-\frac{\partial^2}{\partial x^2}+2c\delta(x)\Big)\Psi(t,x),\qquad t>0,\,x\in\mathbb{R},
\end{equation*}
and is made mathematically rigorous in the form
\begin{subequations}\label{Eq_Schroedinger_delta}
\begin{align}
i\frac{\partial}{\partial t}\Psi(t,x)&=-\frac{\partial^2}{\partial x^2}\Psi(t,x),\qquad t>0,\,x\in\mathbb{R}\setminus\{0\}, \notag \\
\Psi(t,0^+)&=\Psi(t,0^-),\hspace{1.6cm} t>0, \label{Eq_Schroedinger_delta1} \\
\frac{\partial}{\partial x}\Psi(t,0^+)-\frac{\partial}{\partial x}\Psi(t,0^-)&=2c\,\Psi(t,0^\pm),\hspace{1.2cm} t>0. \label{Eq_Schroedinger_delta2}
\end{align}
\end{subequations}
The jump condition \eqref{Eq_Schroedinger_delta1}--\eqref{Eq_Schroedinger_delta2} is realized in
\eqref{Eq_Schroedinger_2} by using the matrix
\begin{equation*}
J=\frac{1}{i-c}\mmatrix{c}{i}{i}{c}.
\end{equation*}
In fact, with this choice of $J$ and multiplication by $(c-i)$ the condition \eqref{Eq_Schroedinger_2} reads as
\begin{equation*}
\mmatrix{2c-i}{i}{i}{2c-i}\vvect{\Psi(t,0^+)}{\Psi(t,0^-)}=\mmatrix{1}{1}{1}{1}\vvect{\frac{\partial}{\partial x}\Psi(t,0^+)}{-\frac{\partial}{\partial x}\Psi(t,0^-)},
\end{equation*}
or, more explicitely, we have the two equations
\begin{equation*}
 \begin{split}
  (2c-i)\Psi(t,0^+) + i\Psi(t,0^-)  &= \frac{\partial}{\partial x}\Psi(t,0^+) - \frac{\partial}{\partial x}\Psi(t,0^-), \\
  i\Psi(t,0^+) +  (2c-i)\Psi(t,0^-) &= \frac{\partial}{\partial x}\Psi(t,0^+) - \frac{\partial}{\partial x}\Psi(t,0^-).
 \end{split}
\end{equation*}
By subtracting these equations from each other we first conclude \eqref{Eq_Schroedinger_delta1} and adding the equations leads to \eqref{Eq_Schroedinger_delta2}.
In order to write the matrix $J$ in the form \eqref{Eq_General_unitary_matrix} we choose $\phi\in(0,\pi)$ such that $\cot(\phi)=c$.
Next we set $\alpha=-\cos(\phi)$ and $\beta=-i\sin(\phi)$.  It follows, in particular, that
\begin{equation*}
\cos(\phi)=\frac{c}{\sqrt{1+c^2}}\qquad\text{and}\qquad\sin(\phi)=\frac{1}{\sqrt{1+c^2}},
\end{equation*}
and therefore
\begin{equation*}
e^{i\phi}\mmatrix{\alpha}{-\bar\beta}{\beta}{\bar\alpha}=\frac{1}{i-c}\mmatrix{c}{i}{i}{c}=J.
\end{equation*}
Plugging these values in \eqref{Eq_eta1} gives
\begin{equation*}
\eta^{(x,y)}=\left\{\begin{array}{ll} 0, & \text{if }x,y>0, \\ 1, & \text{if }x>0,\,y<0, \\ 1, & \text{if }x<0,\,y>0, \\ 0, & \text{if }x,y<0, \end{array}\right.=\Theta(-xy),
\end{equation*}
and since we are in Case II the coefficients of the Green's function are
\begin{align*}
\omega_-=0,\hspace{2.3cm}\mu_-^{(x,y)}&=0, \\
\omega_+=\cot(\phi)=c,\qquad\mu_+^{(x,y)}&=-\frac{c}{2}\big(\Theta(xy)+\Theta(-xy)\big)=-\frac{c}{2}, \\
\mu_0^{(x,y)}&=\Theta(-xy)-\Theta(-xy)=0.
\end{align*}
With these quantities we conclude from \eqref{Eq_Greenfunction} that the Green's function of the $\delta$-potential is given by
\begin{equation*}
G(t,x,y)=-\frac{c}{2}\Lambda\left(\frac{|x|+|y|}{2\sqrt{it}}+c\sqrt{it}\right)e^{-\frac{(|x|+|y|)^2}{4it}}+\frac{1}{2\sqrt{i\pi t}}e^{-\frac{(x-y)^2}{4it}}.
\end{equation*}
\end{bsp}

The $\delta'$-interaction in the next example is another popular singular potential that appears in various situations. 

\begin{bsp}[$\delta'$-potential]\label{bsp_Deltaprime}
Now consider the $\delta'$-interaction of strength $\frac{2}{c}\in\mathbb{R}\setminus\{0\}$ located at the point $x=0$. Formally one then deals with
the Schr\"odinger equation
\begin{equation*}
i\frac{\partial}{\partial t}\Psi(t,x)=\Big(-\frac{\partial^2}{\partial x^2}+\frac{2}{c}\delta'(x)\Big)\Psi(t,x),\qquad t>0,\,x\in\mathbb{R},
\end{equation*}
which in a mathematically rigorous form reads as
\begin{align*}
i\frac{\partial}{\partial t}\Psi(t,x)&=-\frac{\partial^2}{\partial x^2}\Psi(t,x),\qquad t>0,\,x\in\mathbb{R}\setminus\{0\}, \\
\frac{\partial}{\partial x}\Psi(t,0^+)&=\frac{\partial}{\partial x}\Psi(t,0^-), \hspace{1.05cm} t>0, \\
\Psi(t,0^+)-\Psi(t,0^-)&=\frac{2}{c}\frac{\partial}{\partial x}\Psi(t,0),\hspace{1.05cm} t>0.
\end{align*}
One verifies in a similar way as in in the previous example that the jump conditions are realized in \eqref{Eq_Schroedinger_2} by using the matrix
\begin{equation*}
J=\frac{1}{i-c}\mmatrix{i}{-c}{-c}{i}.
\end{equation*}
This matrix is of the form \eqref{Eq_General_unitary_matrix} if we choose $\phi\in(0,\pi)\setminus\{\frac{\pi}{2}\}$ such that $\tan(\phi)=-c$
and set $\alpha=\cos(\phi)$ and $\beta=-i\sin(\phi)$. The coefficient $\eta^{(x,y)}$ in \eqref{Eq_eta1} then becomes
\begin{equation*}
\eta^{(x,y)}=\left\{\begin{array}{ll} 0, & \text{if }x,y>0, \\ 1, & \text{if }x>0,\,y<0, \\ 1, & \text{if }x<0,\,y>0, \\ 0, & \text{if }x,y<0, \end{array}\right.=\Theta(-xy),
\end{equation*}
and since we are in Case I the coefficients of the Green's function are
\begin{align*}
\omega_-=-\tan(\phi)=c,\qquad\mu_-^{(x,y)}&=-\frac{c}{2}\big(\Theta(xy)-\Theta(-xy)\big)=-\frac{c\,\sgn(xy)}{2}, \\
\omega_+=0,\hspace{2.6cm}\mu_+^{(x,y)}&=0, \\
\mu_0^{(x,y)}&=\sgn(xy).
\end{align*}
It follows that the Green's function of the $\delta'$-potential is given by
\begin{align*}
G(t,x,y)=&-\frac{c\,\sgn(xy)}{2}\Lambda\left(\frac{|x|+|y|}{2\sqrt{it}}+c\sqrt{it}\right)e^{-\frac{(|x|+|y|)^2}{4it}} \\
&+\frac{1}{2\sqrt{i\pi t}}\left(\sgn(xy)e^{-\frac{(|x|+|y|)^2}{4it}}+e^{-\frac{(x-y)^2}{4it}}\right).
\end{align*}
\end{bsp}

Now we turn to generalized point interactions that lead to decoupled systems. In the following examples we discuss
Dirichlet, Neumann, and Robin boundary conditions at the origin.

\begin{bsp}[Dirichlet boundary conditions]\label{bsp_Dirichlet}
We consider the free Schrödinger equation on the two half lines $\mathbb{R}\setminus\{0\}$ with Dirichlet boundary conditions
\begin{align*}
i\frac{\partial}{\partial t}\Psi(t,x)&=-\frac{\partial^2}{\partial x^2}\Psi(t,x),\qquad t>0,\,x\in\mathbb{R}\setminus\{0\}, \\
\Psi(t,0^+)&=\Psi(t,0^-)=0,\hspace{0.85cm} t>0.
\end{align*}
These boundary conditions are realized in \eqref{Eq_Schroedinger_2} by using the matrix
\begin{equation}\label{jdir}
J=\mmatrix{-1}{0}{0}{-1},
\end{equation}
that is, we have $\phi=0$, $\alpha=-1$, and $\beta=0$ in \eqref{Eq_General_unitary_matrix}, and hence Case III applies.
The coefficients of the Green's function are given by
\begin{equation*}
\omega_\pm=0,\qquad\mu_\pm^{(x,y)}=0,\quad\text{and}\quad\mu_0^{(x,y)}=-1,
\end{equation*}
and lead to
\begin{equation}\label{Eq_Greenfunction_Dirichlet}
G(t,x,y)=\frac{1}{2\sqrt{i\pi t}}\left(e^{-\frac{(x-y)^2}{4it}}-e^{-\frac{(|x|+|y|)^2}{4it}}\right).
\end{equation}
\end{bsp}

\begin{bsp}[Neumann boundary conditions]\label{bsp_Neumann}
We consider the free Schr\"odinger equation on the two half lines $\mathbb{R}\setminus\{0\}$ with Neumann boundary conditions
\begin{align*}
i\frac{\partial}{\partial t}\Psi(t,x)&=-\frac{\partial^2}{\partial x^2}\Psi(t,x),\hspace{1.25cm} t>0,\,x\in\mathbb{R}\setminus\{0\}, \\
\frac{\partial}{\partial x}\Psi(t,0^+)&=\frac{\partial}{\partial x}\Psi(t,0^-)=0,\qquad t>0.
\end{align*}
These boundary conditions are realized in \eqref{Eq_Schroedinger_2} by using the matrix
\begin{equation*}
J=\mmatrix{1}{0}{0}{1},
\end{equation*}
that is, we have $\phi=0$, $\alpha=1$, and $\beta=0$ in \eqref{Eq_General_unitary_matrix}, and hence Case I applies.
The coefficients of the Green's function are given by
\begin{equation*}
\omega_\pm=0,\qquad\mu_\pm^{(x,y)}=0,\quad\text{and}\quad\mu_0^{(x,y)}=\sgn(xy),
\end{equation*}
and lead to
\begin{equation*}
G(t,x,y)=\frac{1}{2\sqrt{i\pi t}}\left(e^{-\frac{(x-y)^2}{4it}}+\sgn(xy)e^{-\frac{(|x|+|y|)^2}{4it}}\right).
\end{equation*}
\end{bsp}

In the next example we consider Robin boundary conditions at the origin. The Neumann boundary conditions in Example~\ref{bsp_Neumann} are contained as
a special case and the Dirichlet boundary conditions in Example~\ref{bsp_Dirichlet} formally appear as a limit; cf. Remark~\ref{remjussi}.

\begin{bsp}[Robin boundary conditions]\label{bsp_Robin}
We consider the free Schr\"odinger equation on the two half lines $\mathbb{R}\setminus\{0\}$ with Robin boundary conditions
\begin{align*}
i\frac{\partial}{\partial t}\Psi(t,x)&=-\frac{\partial^2}{\partial x^2}\Psi(t,x),\qquad t>0,\,x\in\mathbb{R}\setminus\{0\}, \\
\frac{\partial}{\partial x}\Psi(t,0^+)&=a\,\Psi(t,0^+),\hspace{1.3cm} t>0, \\
\frac{\partial}{\partial x}\Psi(t,0^-)&=b\,\Psi(t,0^-),\hspace{1.35cm} t>0,
\end{align*}
for some $a,b\in\mathbb{R}$; note that the minus sign for the derivative at $x=0^-$ on the right hand side of \eqref{Eq_Schroedinger_2} is omitted here.
These boundary conditions are realized in \eqref{Eq_Schroedinger_2} by using the matrix
\begin{equation}\label{jrobin}
J=\mmatrix{\frac{i+a}{i-a}}{0}{0}{\frac{i-b}{i+b}},
\end{equation}
which is of the form \eqref{Eq_General_unitary_matrix} with
\begin{equation*}
 \alpha=\sgn(b-a)\frac{(1-ia)(1-ib)}{\sqrt{1+a^2}\sqrt{1+b^2}},\qquad \beta=0,
\end{equation*}
and $\phi\in [0,\pi)$ chosen such that
\begin{equation*}
e^{i\phi}= \sgn(b-a)\frac{(1-ia)(1+ib)}{\sqrt{1+a^2}\sqrt{1+b^2}},
\end{equation*}
where we use $\sgn(0)=1$. One verifies that Case I applies and a (more technical) computation
finally leads to the Green's function
\begin{equation}\label{greenrobin}
\begin{split}
G(t,x,y)=&\left(-a\,\Theta(x)\Theta(y)\Lambda\left(\frac{|x|+|y|}{2\sqrt{it}}+a\sqrt{it}\right)\right. \\
&\quad\left.+b\,\Theta(-x)\Theta(-y)\Lambda\left(\frac{|x|+|y|}{2\sqrt{it}}-b\sqrt{it}\right)\right)e^{-\frac{(|x|+|y|)^2}{4it}} \\
&+\frac{1}{2\sqrt{i\pi t}}\left(\sgn(xy)e^{-\frac{(|x|+|y|)^2}{4it}}+e^{-\frac{(x-y)^2}{4it}}\right).
\end{split}
\end{equation}
\end{bsp}

\begin{bem}\label{remjussi}
It is clear that for $a=b=0$ the boundary condition and Green's function in Example~\ref{bsp_Robin} reduces to those in Example~\ref{bsp_Neumann}.
Moreover, also the boundary condition and Green's function for the Dirichlet decoupling in Example~\ref{bsp_Dirichlet}
can be recovered from Example~\ref{bsp_Robin}. In fact, for
$a\to\infty$ and $b\to-\infty$ the matrix $J$ in \eqref{jrobin} tends to the one in \eqref{jdir} and using Lemma~\ref{lem_Lambda_Properties}~(iv) one obtains
the asymptotics
\begin{equation*}
\Lambda\left(\frac{|x|+|y|}{2\sqrt{it}}+a\sqrt{it}\right)\sim\frac{1}{a\sqrt{i\pi t}}\qquad\text{and}
\qquad\Lambda\left(\frac{|x|+|y|}{2\sqrt{it}}-b\sqrt{it}\right)\sim\frac{1}{-b\sqrt{i\pi t}}
\end{equation*}
in \eqref{greenrobin}, which then lead to the Green's function \eqref{Eq_Greenfunction_Dirichlet}.
\end{bem}

\section{Solution of the Schr\"odinger equation with a generalized point interaction}\label{sec_Solution_of_the_Schroedinger_equation}

In this section we continue the theme from Section \ref{sec_Greens_function_of_singular_interactions}, where in Theorem~\ref{satz_Green_function}
it was already shown that the Green's function \eqref{Eq_Greenfunction} satisfies the Schr\"odinger equation \eqref{Eq_DGL_Greenfunction} and the jump
condition \eqref{Eq_Jump_condition_Greenfunction} that represents the generalized point interaction at the origin. Now we turn our attention to the
initial value \eqref{Eq_Schroedinger_3}. This missing part will be provided in Theorem \ref{satz_Wave_function} below. However, the main technical
issue here is to make sense of the integral \eqref{Eq_WavefunctionHH}. Since we want to consider, e.g., plane waves $F(x)=e^{ikx}$ as initial conditions,
we have to deal with integrands that are not absolutely integrable. For this purpose the so-called Fresnel integral, discussed in Lemma \ref{lem_Fresnel_integral},
will be useful. The resulting representation of the integral then also ensures, in a mathematical rigorous way, that the properties \eqref{Eq_DGL_Greenfunction}
and \eqref{Eq_Jump_condition_Greenfunction} of the Green's function $G$ carry over to the respective properties \eqref{Eq_Schroedinger_1}
and \eqref{Eq_Schroedinger_2} of the wave function $\Psi$.

\begin{lem}[Fresnel integral]\label{lem_Fresnel_integral}
Let $f:\Omega\to\mathbb{C}$ be holomorphic on an open set $\Omega\subseteq\mathbb{C}$ which contains the sector
\begin{equation}\label{Eq_Salpha}
S_\alpha:=\Set{z\in\mathbb{C} : 0\leq\Arg(z)\leq\alpha}
\end{equation}
for some $\alpha\in(0,\frac{\pi}{2})$, and assume that $f$ satisfies the estimate
\begin{equation}\label{Eq_Fresnel_boundedness}
|f(z)|\leq A\,e^{-\varepsilon\Im(z^2)},\qquad z\in S_\alpha,
\end{equation}
for some $A\geq 0$ and $\varepsilon>0$. Then we get
\begin{equation}\label{Eq_Fresnel_integral}
\lim\limits_{R\to\infty}\int_0^Rf(y)dy=e^{i\alpha}\int_0^\infty f\big(ye^{i\alpha}\big)dy,
\end{equation}
where the integral on the right hand side is absolutely convergent.
\end{lem}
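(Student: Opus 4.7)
The plan is a standard Cauchy-theorem / pie-slice contour argument, tailored to the Gaussian-type decay of $f$ on the sector $S_\alpha$. For fixed $R>0$ I would consider the closed contour $\gamma_R$ consisting of the real segment $[0,R]$, the circular arc $\{Re^{i\theta}:\theta\in[0,\alpha]\}$, and the radial segment $\{re^{i\alpha}:r\in[0,R]\}$ traversed back towards the origin. Since $f$ is holomorphic on an open neighborhood of $S_\alpha$, Cauchy's integral theorem gives
\begin{equation*}
\int_0^R f(y)\,dy \;+\; \int_0^{\alpha} f\bigl(Re^{i\theta}\bigr)\,iRe^{i\theta}\,d\theta \;-\; e^{i\alpha}\int_0^R f\bigl(re^{i\alpha}\bigr)\,dr \;=\; 0.
\end{equation*}
The identity \eqref{Eq_Fresnel_integral} will then follow as soon as I show that the arc integral tends to $0$ as $R\to\infty$ and that the rotated integral on the right converges absolutely.

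Absolute convergence of the right-hand side is the easy step. On the ray $z=re^{i\alpha}$ one has $\Im(z^2)=r^2\sin(2\alpha)$, and since $\alpha\in(0,\tfrac{\pi}{2})$ the factor $\sin(2\alpha)$ is strictly positive. The bound \eqref{Eq_Fresnel_boundedness} then yields $|f(re^{i\alpha})|\le A\,e^{-\varepsilon\sin(2\alpha)\,r^2}$, which is integrable over $r\in[0,\infty)$.

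The main obstacle, and the only genuinely technical step, is showing that the arc integral vanishes in the limit. Parametrizing $z=Re^{i\theta}$ and using \eqref{Eq_Fresnel_boundedness} I obtain the bound
\begin{equation*}
\left|\int_0^{\alpha} f\bigl(Re^{i\theta}\bigr)\,iRe^{i\theta}\,d\theta\right|\;\le\;AR\int_0^{\alpha} e^{-\varepsilon R^2\sin(2\theta)}\,d\theta.
\end{equation*}
Since $2\alpha<\pi$, we have $\sin(2\theta)>0$ on $(0,\alpha]$, and one can control this integral by splitting the interval. On $[0,\min(\alpha,\tfrac{\pi}{4})]$ I would invoke Jordan's inequality $\sin(2\theta)\ge \tfrac{4\theta}{\pi}$ to estimate the integrand by $e^{-\frac{4\varepsilon R^2}{\pi}\theta}$, whose integral is $O(1/R^2)$; on the remaining part $[\tfrac{\pi}{4},\alpha]$ (if non-empty) one uses $\sin(2\theta)\ge \sin(2\alpha)>0$, so that the integrand is bounded by $e^{-\varepsilon R^2\sin(2\alpha)}$ which decays super-polynomially. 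In both cases the prefactor $AR$ is dominated, and the arc integral tends to $0$ as $R\to\infty$.

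Combining Cauchy's theorem with the vanishing of the arc integral and the established absolute convergence on the tilted ray yields
\begin{equation*}
\lim_{R\to\infty}\int_0^R f(y)\,dy \;=\; e^{i\alpha}\lim_{R\to\infty}\int_0^R f\bigl(re^{i\alpha}\bigr)\,dr \;=\; e^{i\alpha}\int_0^\infty f\bigl(ye^{i\alpha}\bigr)\,dy,
\end{equation*}
which is exactly \eqref{Eq_Fresnel_integral}.
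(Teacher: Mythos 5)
Your proof is correct, and it differs from the paper's in a small but noteworthy way: the choice of the third side of the contour. You use the classical pie-slice contour whose outer boundary is the circular arc $\{Re^{i\theta}:0\le\theta\le\alpha\}$, and you control the arc integral via a Jordan-type estimate, splitting at $\theta=\pi/4$ and invoking $\sin(2\theta)\ge\tfrac{4\theta}{\pi}$ on the initial piece. The paper instead closes the contour with the \emph{vertical} segment $\gamma_2=\{R+iy:0\le y\le R\tan\alpha\}$, forming a triangle with vertices $0$, $R$, and $R(1+i\tan\alpha)=R\sqrt{1+\tan^2\alpha}\,e^{i\alpha}$. On $\gamma_2$ one has the exact identity $\Im\bigl((R+iy)^2\bigr)=2Ry$, so the hypothesis \eqref{Eq_Fresnel_boundedness} immediately gives $|f(R+iy)|\le A e^{-2\varepsilon Ry}$ and hence
\begin{equation*}
\left|\int_0^{R\tan\alpha} f(R+iy)\,dy\right|\le A\int_0^{\infty}e^{-2\varepsilon Ry}\,dy=\frac{A}{2\varepsilon R}\longrightarrow 0,
\end{equation*}
with no case distinction and no Jordan inequality. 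Both routes are entirely rigorous and each requires only Cauchy's theorem plus the Gaussian-type decay; the paper's vertical segment buys a one-line estimate by exploiting the linear behaviour of $\Im(z^2)$ along $\Re(z)=R$, whereas your arc version is the more familiar textbook contour at the cost of the extra splitting step. Your absolute-convergence argument on the tilted ray is identical in substance to the paper's.
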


\begin{proof}
For simplicity we will write $k=\tan(\alpha)>0$. For every $R>0$ we consider the integration path

\begin{minipage}{0.5\textwidth}
\begin{align*}
\gamma_1&:=\Set{y : 0\leq y\leq R}, \\
\gamma_2&:=\Set{R+iy : 0\leq y\leq kR}, \\
\gamma_3&:=\Set{ye^{i\alpha} : 0\leq y\leq R\sqrt{1+k^2}}.
\end{align*}
\end{minipage}
\hspace{1cm}
\begin{minipage}{0.39\textwidth}
\begin{tikzpicture}
\draw[thick,->] (0,0)--(5,0);
\draw[thick,->] (0,0)--(0,3);
\draw[ultra thick,->] (0,0)--(2.5,0);
\draw[ultra thick,-] (0,0)--(4,0);
\draw[ultra thick,->] (4,0)--(4,1.6);
\draw[ultra thick,-] (4,0)--(4,3);
\draw[ultra thick,->] (0,0)--(2,1.5);
\draw[ultra thick,-] (0,0)--(4,3);
\draw[thick,-] (1,0)arc(0:37:1);
\filldraw[black] (2.5,0) node[anchor=north east] {$\gamma_1$};
\filldraw[black] (4,1.6) node[anchor=north west] {$\gamma_2$};
\filldraw[black] (2,1.5) node[anchor=south east] {$\gamma_3$};
\filldraw[black] (5,0) node[anchor=south] {\tiny{$\Re(z)$}};
\filldraw[black] (0,3) node[anchor=west] {\tiny{$\Im(z)$}};
\filldraw[black] (0.4,0.2) node[anchor=west] {\tiny{$\alpha$}};
\filldraw[black] (4,0) node[anchor=north] {\tiny{$R$}};
\end{tikzpicture}
\end{minipage}
Since $f$ is holomorphic, Cauchy's theorem yields
\begin{equation}\label{Eq_Fresnel_integral1}
\begin{split}
\int_0^Rf(y)dy=\int_{\gamma_1}f(z)dz&=-\int_{\gamma_2}f(z)dz+\int_{\gamma_3}f(z)dz\\
&=-i\int_0^{kR}f(R+iy)dy + e^{i\alpha}\int_0^{R\sqrt{1+k^2}}f\big(ye^{i\alpha}\big)dy.
\end{split}
\end{equation}
From the estimate \eqref{Eq_Fresnel_boundedness} we obtain
\begin{equation*}
\left|-i\int_0^{kR}f(R+iy)dy\right|\leq A\int_0^\infty e^{-2\varepsilon Ry}dy=\frac{A}{2\varepsilon R}\rightarrow 0,\quad R\rightarrow\infty,
\end{equation*}
and thus in the limit $R\rightarrow\infty$ we conclude from \eqref{Eq_Fresnel_integral1}
\begin{equation*}
\lim\limits_{R\to\infty}\int_0^Rf(y)dy=e^{i\alpha}\lim\limits_{R\to\infty}\int_0^Rf\big(ye^{i\alpha}\big)dy.
\end{equation*}
The estimate $|f(ye^{i\alpha})|\leq Ae^{-\varepsilon\sin(2\alpha)y^2}$, $y>0$, implies that
the integral on the right hand side is absolutely convergent and hence the identity \eqref{Eq_Fresnel_integral} follows.
\end{proof}

In the next lemma we define functions $\Psi_0$, $\Psi_1$, and $\Psi_\text{free}$ that are closely related to
the functions $G_0$, $G_1$, and $G_\text{free}$ in \eqref{Eq_Part_Greenfunction}, which will then lead to a solution of the Schr\"odinger equation \eqref{Eq_Schroedinger}
in Theorem~\ref{satz_Wave_function} below.

\begin{lem}\label{lem_Psi}
Let $F:\Omega\to\mathbb{C}$ be holomorphic on an open set $\Omega\subseteq\mathbb{C}$ which contains the sector $S_\alpha$ from \eqref{Eq_Salpha}
for some $\alpha\in(0,\frac{\pi}{2})$, and assume that $F$ satisfies the estimate
\begin{equation}\label{Eq_Exponential_boundedness}
|F(z)|\leq A\,e^{B\Im(z)},\qquad z\in S_\alpha,
\end{equation}
for some $A,B\geq 0$. For every fixed $t>0$, $x\in\mathbb{R}\setminus\{0\}$ we consider the function
\begin{equation}\label{Eq_Psi}
\Psi_j(t,x;F)=\int_0^\infty G_j(t,x,y)F(y)dy,\qquad j\in\{0,1,\textnormal{free}\}.
\end{equation}
Then the following assertions hold:

\begin{itemize}
\item[{\rm (i)}] The integral on the right hand side in \eqref{Eq_Psi} exists as the improper Riemann integral
\begin{equation}\label{Eq_Psi2}
\int_0^\infty G_j(t,x,y)F(y)dy:=\lim_{R\rightarrow\infty}\int_0^RG_j(t,x,y)F(y)dy,
\end{equation}
and the function $\Psi_j$ admits the absolute integrable representation
\begin{equation}\label{Eq_Psi3}
\Psi_j(t,x;F)=e^{i\alpha}\int_0^\infty G_j(t,x,ye^{i\alpha})F(ye^{i\alpha})dy,\qquad j\in\{0,1,\textnormal{free}\}.
\end{equation}

\item [{\rm (ii)}] The functions $\Psi_j$, $j\in\{0,1,\textnormal{free}\}$, in \eqref{Eq_Psi} are solutions of the differential equation
\begin{equation}\label{Eq_Psi_DGL}
i\frac{\partial}{\partial t}\Psi_j(t,x;F)=-\frac{\partial^2}{\partial x^2}\Psi_j(t,x;F),\qquad t>0,\,x\in\mathbb{R}\setminus\{0\}.
\end{equation}
\item [{\rm (iii)}] The functions $\Psi_j$, $ j\in\{0,1,\textnormal{free}\}$, in \eqref{Eq_Psi} admit the initial values
\begin{equation}\label{Eq_Psi_initial_condition1}
\Psi_0(0^+,x;F)=\Psi_1(0^+,x;F)=0,\qquad x\in\mathbb{R}\setminus\{0\},
\end{equation}
and
\begin{equation}\label{Eq_Psi_initial_condition2}
\Psi_\textnormal{free}(0^+,x;F)=\left\{\begin{array}{ll} F(x), & \textnormal{if }x>0, \\ 0, & \textnormal{if }x<0. \end{array}\right.
\end{equation}
\end{itemize}
\end{lem}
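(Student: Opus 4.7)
The plan is to use the Fresnel representation~\eqref{Eq_Psi3} as the working form throughout. \emph{Part~(i)} is a direct application of Lemma~\ref{lem_Fresnel_integral} to $f(z)=G_j(t,x,z)F(z)$ on the sector $S_\alpha$. Combining the bound from Lemma~\ref{cor_G_Estimate} with the growth hypothesis~\eqref{Eq_Exponential_boundedness} gives
\begin{equation*}
|G_j(t,x,z)F(z)|\leq A\,c_j(t)\exp\bigl(-\Im(z^2)/(4t)+C\Im(z)\bigr)
\end{equation*}
for some $C\in\mathbb R$ depending on $B$, $x$ and $t$. Since on $S_\alpha$ one has $\Im(z)^2\leq\tfrac12\tan(\alpha)\,\Im(z^2)$ (a direct check via $z=re^{i\theta}$, $\theta\in[0,\alpha]$), a Young-type absorption of the linear term into the quadratic one yields $|G_j F|\leq A'e^{-\varepsilon\Im(z^2)}$ for some $\varepsilon>0$, which is precisely~\eqref{Eq_Fresnel_boundedness}. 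Lemma~\ref{lem_Fresnel_integral} then furnishes both the improper integral~\eqref{Eq_Psi2} and the absolutely convergent representation~\eqref{Eq_Psi3}.

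\emph{Part~(ii)} follows from~\eqref{Eq_Psi3} by differentiating under the integral sign. The explicit derivative formulas~\eqref{Eq_Derivative_G0}--\eqref{Eq_Derivative_Gfree} show that $\partial_tG_j$ and $\partial_x^2G_j$ inherit the same Gaussian factor as $G_j$, multiplied by polynomial or bounded prefactors; the absorption argument from~(i) produces integrable majorants locally uniformly on compact subsets of $(0,\infty)\times(\mathbb R\setminus\{0\})$. Since Lemma~\ref{lem_Part_Greenfunction_DGL} gives $i\partial_tG_j=-\partial_x^2G_j$ pointwise, equation~\eqref{Eq_Psi_DGL} follows at once.

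\emph{Part~(iii)} splits according to the sign of the coefficient of $\Im(z)$ in the bounds~\eqref{Eq_Estimate_2} and~\eqref{Eq_Estimate_3}. For $\Psi_0$, $\Psi_1$ (at any $x\neq 0$) and for $\Psi_\text{free}$ with $x<0$, these bounds contain the favourable factor $\exp(-|x|\Im(z)/(2t))$. Rescaling $y=\sqrt{t}\,s$ in~\eqref{Eq_Psi3} turns the exponent into
\begin{equation*}
-s^2\sin(2\alpha)/4-|x|s\sin\alpha/(2\sqrt{t})+B\sqrt{t}\,s\sin\alpha,
\end{equation*}
which for every fixed $s>0$ is driven to $-\infty$ by the middle term, since $|x|>0$. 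A $t$-uniform majorant of the form $A\,e^{-s^2\sin(2\alpha)/4+B\sqrt{T}\,s\sin\alpha}$ (valid on $t\in(0,T]$) enables dominated convergence and yields~\eqref{Eq_Psi_initial_condition1} together with the $x<0$ half of~\eqref{Eq_Psi_initial_condition2}. The remaining case is $\Psi_\text{free}(0^+,x;F)=F(x)$ for $x>0$, where the weaker bound~\eqref{Eq_Estimate_3} forbids the simple DCT above. Here I substitute $u=(z-x)/(2\sqrt{it})$ in the rotated-contour representation to arrive at
\begin{equation*}
\Psi_\text{free}(t,x;F)=\frac{1}{\sqrt{\pi}}\int_{\Gamma_t}e^{-u^2}F\bigl(x+2\sqrt{it}\,u\bigr)du,
\end{equation*}
with $\Gamma_t$ the half-line starting at $u_0(t):=-x/(2\sqrt{it})$ and going in direction $e^{i(\alpha-\pi/4)}$. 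Cauchy's theorem (closed off with a vertical arc at large $|u|$, whose contribution vanishes thanks to the $e^{-u^2}$ decay) deforms $\Gamma_t$ into the broken path $[u_0(t),0]\cup[0,\infty)$, whose image under $u\mapsto x+2\sqrt{it}\,u$ stays in $S_\alpha$ so that $F$ remains evaluable. On $[0,\infty)$ dominated convergence delivers the contribution $F(x)/2$. On $[u_0(t),0]$, parameterized by $w(\tau)=(1-\tau)u_0(t)$, $\tau\in[0,1]$, the algebraic identity $x+2\sqrt{it}\,w(\tau)=\tau x$ reduces $F$ to the bounded continuous function $F(\tau x)$, while $e^{-w^2}$ supplies the rapidly oscillating factor $e^{i(1-\tau)^2 x^2/(4t)}$; a boundary stationary-phase evaluation at $\tau=1$ with $\psi''(1)=x^2/2>0$ yields the remaining $F(x)/2$, so that the two contributions combine to $F(x)$.

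\emph{Main obstacle.} The delicate step is the $\Psi_\text{free}$ initial value for $x>0$: the integrand does not decay as $t\to 0^+$, and the limit $F(x)$ emerges as the sum of a classical half-Gaussian contribution and a boundary Fresnel-type stationary-phase term, precisely balanced through the algebraic cancellation $x+2\sqrt{it}\,u_0(t)=0$ that also ensures the deformed $z$-contour stays inside the holomorphy sector of~$F$.
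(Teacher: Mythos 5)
Parts~(i) and~(ii) follow the same route as the paper: for~(i) you combine Lemma~\ref{cor_G_Estimate} with the growth bound on $F$, use the elementary inequality $\Im(z)^2\le\tfrac12\tan(\alpha)\Im(z^2)$ on $S_\alpha$ (equivalent to $\Im(z^2)\ge\frac{2}{\tan\alpha}\Im(z)^2$, which is what the paper uses) and then absorb the linear term into the quadratic one; for~(ii) you differentiate under the integral sign in~\eqref{Eq_Psi3} using locally uniform majorants. For the easy half of~(iii) --- $\Psi_0$, $\Psi_1$, and $\Psi_\text{free}$ with $x<0$ --- your rescaling-plus-dominated-convergence argument is a minor repackaging of the paper's direct computation via~\eqref{Eq_Lambda_Integral}, and is fine.

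The genuinely different and delicate step is $\Psi_\text{free}(0^+,x;F)=F(x)$ for $x>0$, and there your argument has a real gap. You deform the $u$-contour $\Gamma_t$ to $[u_0(t),0]\cup[0,\infty)$ and assert that its image under $u\mapsto x+2\sqrt{it}\,u$ lies in $S_\alpha$. That is false in general: for $u\ge 0$ the image is $\{x+2\sqrt{t}\,e^{i\pi/4}u:u\ge 0\}$, a ray emanating from $x$ in direction $e^{i\pi/4}$, whose argument increases to $\pi/4$ as $u\to\infty$. When $\alpha<\pi/4$ (which the hypotheses allow) this ray eventually exits $S_\alpha$, so $F$ need not be defined there; moreover in $z$-space the rays $R_\alpha=\{re^{i\alpha}\}$ and $\{x+se^{i\pi/4}\}$ actually cross at $r=x/(\cos\alpha-\sin\alpha)$, so the region swept by the deformation also leaves $S_\alpha$, and Cauchy's theorem cannot be invoked for a holomorphy domain $\Omega$ that is only assumed to contain $S_\alpha$. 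The repair is to replace $[0,\infty)$ by a ray $\{se^{i\beta}:s\ge 0\}$ with $\beta\in(-\pi/4,\alpha-\pi/4)$, so that the image ray from $x$ has direction $e^{i(\pi/4+\beta)}$ with $\pi/4+\beta<\alpha$; with that choice the same computation still gives the two contributions $F(x)/2$ (once the boundary stationary-phase / Fresnel-integral step is justified, e.g.\ by the substitution $s=(1-\tau)x/(2\sqrt t)$ and a standard oscillatory-integral argument, since the resulting integral is not absolutely convergent). By contrast, the paper sidesteps contour deformation altogether by splitting $\int_0^\infty=\int_0^{2x}+\int_{2x}^\infty$, integrating by parts against $\frac{d}{dy}\erf\big(\frac{x-y}{2\sqrt{it}}\big)$ in the first integral and reducing the second, after the shift $y\mapsto y+2x$, to the already-settled $\Psi_0$ case with initial datum $F(\cdot+2x)$ --- an approach that works for every $\alpha\in(0,\pi/2)$ without further ado.
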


\begin{proof}
(i) This assertion is a direct consequence of Lemma~\ref{lem_Fresnel_integral} if we verify that the functions $y\mapsto G_j(t,x,y)F(y)$, $ j\in\{0,1,\textnormal{free}\}$, satisfy an estimate of the form \eqref{Eq_Fresnel_boundedness}. In fact, the estimate \eqref{Eq_Estimate_1} together with the assumption \eqref{Eq_Exponential_boundedness} leads to the bound
\begin{equation}\label{Eq_Estimate_integrand}
\big|G_j(t,x,z)F(z)\big|\leq Ac_j(t)e^{-\frac{\Im(z^2)}{4t}+(B+\frac{|x|}{2t})\Im(z)},\qquad z\in S_\alpha.
\end{equation}
Since for every $z\in S_\alpha$ we have $\Im(z)\leq\tan(\alpha)\Re(z)$, and hence $\Im(z^2)\geq\frac{2}{\tan(\alpha)}\Im(z)^2$,
the exponent in \eqref{Eq_Estimate_integrand} can be further estimated by
\begin{equation*}
-\frac{\Im(z^2)}{4t}+\Big(B+\frac{|x|}{2t}\Big)\Im(z)\leq-\frac{\Im(z^2)}{8t}-\frac{\Im(z)^2}{4t\tan(\alpha)}+\Big(B+\frac{|x|}{2t}\Big)\Im(z).
\end{equation*}
Taking into account that a polynomial of the form $-a\Im(z)^2+b\Im(z)$ with $a>0$, $b\in\mathbb{R}$, is bounded by $\frac{b^2}{4a}$, we
find
\begin{equation}\label{Eq_Estimate_Exponent2}
-\frac{\Im(z^2)}{4t}+\Big(B+\frac{|x|}{2t}\Big)\Im(z)\leq-\frac{\Im(z^2)}{8t}+t\Big(B+\frac{|x|}{2t}\Big)^2\tan(\alpha),
\end{equation}
and thus \eqref{Eq_Estimate_integrand} can be estimated by
\begin{equation*}
\big|G_j(t,x,z)F(z)\big|\leq Ac_j(t)e^{t(B+\frac{|x|}{2t})^2\tan(\alpha)}e^{-\frac{\Im(z^2)}{8t}},\qquad z\in S_\alpha.
\end{equation*}
This shows that \eqref{Eq_Fresnel_boundedness} indeed holds in the present context and the integral \eqref{Eq_Psi} exists in the form \eqref{Eq_Psi2} and admits the absolute integrable representation \eqref{Eq_Psi3}.

\medskip

\noindent (ii) Now we show that the functions $\Psi_0$, $\Psi_1$, and $\Psi_\text{free}$ satisfy the differential equation \eqref{Eq_Psi_DGL}. Since we have already
shown in Lemma \ref{lem_Part_Greenfunction_DGL} that $G_0$, $G_1$, and $G_\text{free}$ solve \eqref{Eq_Part_Greenfunction_DGL},
it remains to interchange the integral and the derivatives in the representation \eqref{Eq_Psi3}. We verify this property for the
time derivative of $G_0$ and leave the analog arguments for the spatial derivatives and the functions
$G_1$ and $G_\text{free}$ to the reader.
Note that from \eqref{Eq_Derivative_G0_1} with $z=ye^{i\alpha}$ one obtains
\begin{equation*}
\Big|\frac{\partial}{\partial t}G_0\big(t,x,ye^{i\alpha}\big)\Big|\leq\frac{1}{4t\sqrt{\pi t}}\Big(1+\frac{(|x|+|y|)^2}{2t}\Big)e^{-\frac{y^2\sin(2\alpha)}{4t}-\frac{|x|y\sin(\alpha)}{2t}}
\end{equation*}
and hence together with \eqref{Eq_Exponential_boundedness}
\begin{equation*}
\Big|\frac{\partial}{\partial t}G_0(t,x,ye^{i\alpha})F(ye^{i\alpha})\Big|\leq\frac{A}{4t\sqrt{\pi t}}\Big(1+\frac{(|x|+|y|)^2}{2t}\Big)
e^{-\frac{y^2\sin(2\alpha)}{4t}-\frac{|x|y\sin(\alpha)}{2t}+By\sin (\alpha)}.
\end{equation*}
The term $e^{-\frac{y^2\sin(2\alpha)}{4t}}$ now ensures the integrability of the right hand side. Since all terms are continuous functions in $t$, we can also choose
an integrable upper bound, which is locally uniform in $t$. Hence, by classical theorems for Lebesgue integral (see, e.g.,
\cite{Royden})
 the time derivative of $\Psi_0(t,x;F)$ exists and is given by
\begin{equation*}
\frac{\partial}{\partial t}\Psi_0(t,x;F)=e^{i\alpha}\int_0^\infty\frac{\partial}{\partial t}G_0(t,x,ye^{i\alpha})F(ye^{i\alpha})dy.
\end{equation*}
Similar arguments also apply to the other eight derivatives in \eqref{Eq_Derivative_G0}, \eqref{Eq_Derivative_G1}, and \eqref{Eq_Derivative_Gfree},
and we conclude for every $j\in\{0,1,\text{free}\}$
\begin{subequations}\label{Eq_Derivative_Psi}
\begin{align}
\frac{\partial}{\partial t}\Psi_j(t,x;F)&=e^{i\alpha}\int_0^\infty\frac{\partial}{\partial t}G_j\big(t,x,ye^{i\alpha}\big)F\big(ye^{i\alpha}\big)dy, \label{Eq_Derivative_Psi_t} \\
\frac{\partial}{\partial x}\Psi_j(t,x;F)&=e^{i\alpha}\int_0^\infty\frac{\partial}{\partial x}G_j\big(t,x,ye^{i\alpha}\big)F\big(ye^{i\alpha}\big)dy, \label{Eq_Derivative_Psi_x} \\
\frac{\partial^2}{\partial x^2}\Psi_j(t,x;F)&=e^{i\alpha}\int_0^\infty\frac{\partial^2}{\partial x^2}G_j\big(t,x,ye^{i\alpha}\big)F\big(ye^{i\alpha}\big)dy.\label{Eq_Derivative_Psi_xx}
\end{align}
\end{subequations}
As was already mentioned the functions $G_j$ solve \eqref{Eq_Part_Greenfunction_DGL} and hence it follows that
the functions $\Psi_j$ satisfy \eqref{Eq_Psi_DGL}.

\medskip

\noindent (iii) To check the initial conditions \eqref{Eq_Psi_initial_condition1} for $\Psi_0$ and $\Psi_1$, we plug in the estimates \eqref{Eq_Estimate_2} and \eqref{Eq_Exponential_boundedness} into the representation \eqref{Eq_Psi3}. This yields
\begin{equation}\label{Eq_Initial_value_Psi0}
\begin{split}
\big|\Psi_j(t,x;F)\big|&\leq Ac_j(t)\int_0^\infty e^{-\frac{y^2\sin(2\alpha)}{4t}+(B-\frac{|x|}{2t})y\sin(\alpha)}dy \\
&=\frac{Ac_j(t)\sqrt{\pi t}}{\sqrt{\sin(2\alpha)}}\Lambda\left(\left(\frac{|x|}{2\sqrt{t}}-B\sqrt{t}\right)\sqrt{\frac{\tan(\alpha)}{2}}\right)
\rightarrow 0,\qquad t\rightarrow 0^+,
\end{split}
\end{equation}
for $j\in\{0,1\}$,
where we have used the integral
 \eqref{Eq_Lambda_Integral} in the second line; the convergence follows from the asymptotics \eqref{Eq_Lambda_Asymptotic}
and the fact that $c_j(t)\sqrt{t}$ is bounded (for the precise form of the constants see Lemma \ref{cor_G_Estimate}).
For the initial value of $\Psi_\text{free}$ we distinguish two cases. For $x<0$ we use the estimate \eqref{Eq_Estimate_3} and
get the same convergence as in \eqref{Eq_Initial_value_Psi0}. The remaining case $x>0$ is more involved. Here we split up the integral \eqref{Eq_Psi} into
\begin{equation}\label{Eq_Psifree_split}
\Psi_\text{free}(t,x;F)=\frac{1}{2\sqrt{i\pi t}}\left(\int_0^{2x}e^{-\frac{(x-y)^2}{4it}}F(y)dy+\int_{2x}^\infty e^{-\frac{(x-y)^2}{4it}}F(y)dy\right).
\end{equation}
In the first integral we use the derivative $\frac{d}{dz}\erf(z)=\frac{2}{\sqrt{\pi}}e^{-z^2}$ of the error function, as well as integration by parts, to get
\begin{align*}
&\frac{1}{2\sqrt{i\pi t}}\int_0^{2x}e^{-\frac{(x-y)^2}{4it}}F(y)dy=-\frac{1}{2}\int_0^{2x}\frac{d}{dy}\erf\left(\frac{x-y}{2\sqrt{it}}\right)F(y)dy \\
&\hspace{1cm}=\frac{1}{2}\left(\erf\left(\frac{x}{2\sqrt{it}}\right)F(0)-\erf\left(\frac{-x}{2\sqrt{it}}\right)F(2x)+\int_0^{2x}
\erf\left(\frac{x-y}{2\sqrt{it}}\right)F'(y)dy\right).
\end{align*}
Using $\lim_{t\to 0^+}\erf\big(\frac{\xi}{2\sqrt{it}}\big)=\sgn(\xi)$, $\xi\in\mathbb{R}$, and dominated convergence we get
\begin{equation*}
\lim\limits_{t\to 0^+}\frac{1}{2\sqrt{i\pi t}}\int_0^{2x}e^{-\frac{(x-y)^2}{4it}}F(y)dy=\frac{1}{2}\left(F(0)+F(2x)+\int_0^{2x}\sgn(x-y)F'(y)dy\right)=F(x).
\end{equation*}
In the second integral in \eqref{Eq_Psifree_split} we substitute $y\to y+2x$ and obtain
\begin{align*}
\frac{1}{2\sqrt{i\pi t}}\int_{2x}^\infty e^{-\frac{(x-y)^2}{4it}}F(y)dy=\frac{1}{2\sqrt{i\pi t}}\int_0^\infty e^{-\frac{(x+y)^2}{4it}}F(y+2x)dy.
\end{align*}
This is the same integral as the one for $\Psi_0$, with the initial function $F(\,\cdot\,+2x)$ instead $F$. Consequently,
this integral also vanishes in the limit $t\to 0^+$. Thus, we have also shown the initial condition \eqref{Eq_Psi_initial_condition2}
for $\Psi_\text{free}$.
\end{proof}

As the last preparatory statement we prove the following lemma about the representation of the
functions $\Psi_0$, $\Psi_1$, and $\Psi_\text{free}$ at the support of the singular interaction $x=0^\pm$.

\begin{lem}\label{lem_Psi_x0}
Let $F:\Omega\to\mathbb{C}$ be holomorphic on an open set $\Omega\subseteq\mathbb{C}$ which contains the sector $S_\alpha$ from \eqref{Eq_Salpha}
for some $\alpha\in(0,\frac{\pi}{2})$, and assume that $F$ satisfies the estimate
\begin{equation}\label{Eq_Exponential_boundedness2}
|F(z)|\leq A\,e^{B\Im(z)},\qquad z\in S_\alpha,
\end{equation}
for some $A,B\geq 0$. Then for the functions $\Psi_j$, $j\in\{0,1,\textnormal{free}\}$, from \eqref{Eq_Psi} and their spatial derivatives we are allowed to carry the limit $x\to 0^\pm$ inside the integral
\begin{subequations}\label{Eq_Psi_x0}
\begin{align}
\Psi_j(t,0^\pm;F)&=\int_0^\infty G_j(t,0^\pm,y)F(y)dy, \label{Eq_Psi_x0_1} \\
\frac{\partial}{\partial x}\Psi_j(t,0^\pm;F)&=\int_0^\infty\frac{\partial}{\partial x}G_j(t,0^\pm,y)F(y)dy, \label{Eq_Psi_x0_2}
\end{align}
\end{subequations}
where, similar to \eqref{Eq_Psi2}, the integrals exist as improper Riemann integrals
$\int_0^\infty \coloneqq\lim\limits_{R\to\infty}\int_0^R$.
\end{lem}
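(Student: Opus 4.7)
The plan is to pass through the rotated path representation \eqref{Eq_Psi3} of $\Psi_j$ (and its analog \eqref{Eq_Derivative_Psi_x} for the spatial derivative), exchange the limit $x\to 0^\pm$ with the integral by dominated convergence, and then apply Lemma~\ref{lem_Fresnel_integral} in reverse at $x=0^\pm$ to return to an improper Riemann integral along the positive real axis. In this way both the existence of the improper integrals in \eqref{Eq_Psi_x0} and the claimed identities are established simultaneously.

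For the dominated convergence step I would evaluate the bounds of Lemma~\ref{cor_G_Estimate} at $z=ye^{i\alpha}$, using $\Im\bigl((ye^{i\alpha})^2\bigr)=y^2\sin(2\alpha)$ and $\Im(ye^{i\alpha})=y\sin\alpha$. Combined with \eqref{Eq_Exponential_boundedness2} and the common estimate \eqref{Eq_Estimate_1} (which eliminates the need to distinguish signs of $x$), this produces, uniformly in $|x|\leq 1$,
\begin{equation*}
\bigl|G_j(t,x,ye^{i\alpha})F(ye^{i\alpha})\bigr|\leq A\,c_j(t)\,e^{-\frac{y^2\sin(2\alpha)}{4t}+(B+\frac{1}{2t})y\sin\alpha},\qquad j\in\{0,1,\textnormal{free}\},
\end{equation*}
which is integrable over $y\in(0,\infty)$ and independent of $x$. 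For the spatial derivatives, the expressions \eqref{Eq_Derivative_G0_2}, \eqref{Eq_Derivative_G1_2}, \eqref{Eq_Derivative_Gfree_2} carry the same exponential structure with an additional polynomial prefactor of order $|x|+y$, and in the $G_1$-case a $\Lambda$-term that is dominated by $\Lambda(\omega\sqrt{t}/\sqrt{2})$ via Lemma~\ref{lem_Lambda_Properties}(iii)--(iv). These extra factors only introduce a polynomial weight in $y$, which remains Gaussian-integrable. Dominated convergence then yields
\begin{equation*}
\lim_{x\to 0^\pm}\Psi_j(t,x;F)=e^{i\alpha}\int_0^\infty G_j(t,0^\pm,ye^{i\alpha})F(ye^{i\alpha})\,dy
\end{equation*}
and the analogous identity for $\partial_x\Psi_j(t,x;F)$.

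It then remains to reverse the Fresnel rotation at $x=0^\pm$. The functions $y\mapsto G_j(t,0^\pm,y)F(y)$ and $y\mapsto\partial_x G_j(t,0^\pm,y)F(y)$ themselves satisfy the exponential hypothesis \eqref{Eq_Fresnel_boundedness} of Lemma~\ref{lem_Fresnel_integral}: the polynomial prefactors appearing in the derivative case are absorbed into the Gaussian at the cost of shrinking $\varepsilon$ slightly. Applying Lemma~\ref{lem_Fresnel_integral} in the opposite direction therefore identifies the right-hand sides above with $\lim_{R\to\infty}\int_0^R G_j(t,0^\pm,y)F(y)\,dy$ and $\lim_{R\to\infty}\int_0^R\partial_x G_j(t,0^\pm,y)F(y)\,dy$, which is exactly \eqref{Eq_Psi_x0_1} and \eqref{Eq_Psi_x0_2}.

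The main technical point is controlling the spatial derivatives uniformly near $x=0^\pm$: the factor $\sgn(x)$ in \eqref{Eq_Derivative_G0_2} and \eqref{Eq_Derivative_G1_2} makes $\partial_x G_j$ discontinuous at the origin, so the two one-sided limits must be treated separately. This is not a real obstruction for the majorant argument, since $|\sgn(x)|=1$ and all other $x$-dependent quantities in the derivative expressions remain bounded as $x\to 0^\pm$; but it is precisely this sign-change that is responsible for the distinct values $G_j(t,0^+,y)$ and $G_j(t,0^-,y)$, as well as $\partial_x G_j(t,0^+,y)$ and $\partial_x G_j(t,0^-,y)$, appearing on the right-hand sides of \eqref{Eq_Psi_x0}.
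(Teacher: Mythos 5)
Your proposal is correct and follows essentially the same route as the paper's proof: pass to the rotated representation \eqref{Eq_Psi3}, apply dominated convergence with a majorant uniform in $x$ near the origin (the paper derives exactly your bound in \eqref{Eq_Estimate_x0}), and then apply Lemma~\ref{lem_Fresnel_integral} in reverse to recover the improper Riemann integral. Your additional remarks about absorbing polynomial prefactors into the Gaussian and the role of $\sgn(x)$ merely flesh out what the paper compresses into the one-sentence observation that the derivative formulas \eqref{Eq_Derivative_G0_2}, \eqref{Eq_Derivative_G1_2}, \eqref{Eq_Derivative_Gfree_2} lead to estimates of the same type.
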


\begin{proof}
For the function $\Psi_j$, in the representation \eqref{Eq_Psi3}, we have the estimate
\begin{equation}\label{Eq_Estimate_x0}
\big|G_j(t,x,ye^{i\alpha})F(ye^{i\alpha})\big|\leq Ac_j(t)e^{-\frac{y^2\sin(2\alpha)}{4t}+y(B+\frac{|x|}{2t})\sin(\alpha)},
\end{equation}
which follows from the assumption \eqref{Eq_Exponential_boundedness2} on $F$ and \eqref{Eq_Estimate_1}. Since this upper bound is continuous in $x$, we can choose it to be uniform for all $x$ in a neighborhood of $0$. Now we can use dominated convergence in \eqref{Eq_Psi3} to get the absolute integrable representation
\begin{equation}\label{Eq_Psij_limit_absolute_integrable}
\Psi_j(t,0^\pm;F)=e^{i\alpha}\int_0^\infty G_j(t,0^\pm,ye^{i\alpha})F(ye^{i\alpha})dy.
\end{equation}
Once more from \eqref{Eq_Exponential_boundedness2} and \eqref{Eq_Estimate_1} we get the estimate
\begin{equation*}
\big|G_j(t,0^\pm,z)F(z)\big|\leq Ac_j(t)\,e^{-\frac{\Im(z^2)}{4t}+B\Im(z)},\qquad z\in S_\alpha.
\end{equation*}
The estimate \eqref{Eq_Estimate_Exponent2} for $x=0$ allows to further estimate the integrand by
\begin{equation}\label{Eq_Estimate_x02}
\big|G_j(t,0^\pm,z)F(z)\big|\leq Ac_j(t)\,e^{-\frac{\Im(z^2)}{8t}+tB^2\tan(\alpha)},\qquad z\in S_\alpha.
\end{equation}
This estimate shows, in particular, that the assumption \eqref{Eq_Fresnel_boundedness} of Lemma \ref{lem_Fresnel_integral}
is satisfied and hence we can use \eqref{Eq_Fresnel_integral} to rewrite
the absolute integrable representation \eqref{Eq_Psij_limit_absolute_integrable} into the improper Riemann integral \eqref{Eq_Psi_x0_1}.

\medskip

The same argument applies also to the spatial derivative in \eqref{Eq_Derivative_Psi_x}. Here, the explicit representations \eqref{Eq_Derivative_G0_2}, \eqref{Eq_Derivative_G1_2}, and \eqref{Eq_Derivative_Gfree_2} lead to a similar estimate as in \eqref{Eq_Estimate_1}, and consequently also to estimates of the form \eqref{Eq_Estimate_x0} and \eqref{Eq_Estimate_x02}.
\end{proof}

The next theorem is the main result of this section, where a solution $\Psi$ of the Schr\"odinger equation \eqref{Eq_Schroedinger}
is obtained by assembling the components $\Psi_j$ from \eqref{Eq_Psi} based on the structure of the Green's function
in \eqref{Eq_Greenfunction_decomposition}. Besides the four parts of the Green's function we also have to consider
that now integrals over $\mathbb{R}$ appear, whereas the integrals in \eqref{Eq_Psi} are only over the positive half line $(0,\infty)$.

\begin{satz}\label{satz_Wave_function}
Let $F:\Omega\to\mathbb{C}$ be holomorphic on an open set $\Omega\subseteq\mathbb{C}$ which contains the double sector
\begin{equation}\label{Eq_Doublesector}
S_\alpha\cup(-S_\alpha)=\Set{z\in\mathbb{C} : \Arg(z)\in[0,\alpha]\cup[\pi,\pi+\alpha]}
\end{equation}
for some $\alpha\in(0,\frac{\pi}{2})$, and assume that $F$ satisfies the estimate
\begin{equation}\label{Eq_Exponential_boundedness_doublesector}
|F(z)|\leq A\,e^{B|\Im(z)|},\qquad z\in S_\alpha\cup(-S_\alpha),
\end{equation}
for some $A,B\geq 0$. Let $G$ be the Green's function in \eqref{Eq_Greenfunction} or \eqref{Eq_Greenfunction_decomposition}. Then the function
\begin{equation}\label{Eq_Psiqq}
\Psi(t,x;F)=\int_\mathbb{R}G(t,x,y)F(y)dy,\qquad t>0,\,x\in\mathbb{R}\setminus\{0\},
\end{equation}
exists as an improper Riemann integral of the form
\begin{equation}\label{Eq_Psiq}
\int_\mathbb{R}G(t,x,y)F(y)dy:=\lim\limits_{R_1\to\infty}\int_{-R_1}^0G(t,x,y)F(y)dy+\lim\limits_{R_2\to\infty}\int_0^{R_2}G(t,x,y)F(y)dy
\end{equation}
and $\Psi$ is a solution of the Schr\"odinger equation \eqref{Eq_Schroedinger}.
\end{satz}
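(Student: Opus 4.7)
The plan is to reduce the full-line integral \eqref{Eq_Psiqq} to two half-line integrals of the type treated in Lemma~\ref{lem_Psi}, via the substitution $y\mapsto -y$ on $(-\infty,0)$. Setting $F^-(y):=F(-y)$, the hypothesis \eqref{Eq_Exponential_boundedness_doublesector} gives $|F(z)|\leq Ae^{B\Im(z)}$ for $z\in S_\alpha$ (where $\Im(z)\geq 0$) and $|F^-(z)|=|F(-z)|\leq Ae^{B|\Im(-z)|}=Ae^{B\Im(z)}$ for $z\in S_\alpha$ as well. Thus both $F$ and $F^-$ satisfy the hypothesis of Lemma~\ref{lem_Psi} on the sector $S_\alpha$, and assertion (i) of that lemma yields existence of each one-sided integral as an improper Riemann integral, which matches \eqref{Eq_Psiq}.

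Next I would expand $G$ according to \eqref{Eq_Greenfunction_decomposition} on each half-line. Since $\mu_\pm^{(x,y)}$ and $\mu_0^{(x,y)}$ depend only on $\sgn(y)$ for fixed $x$, they are constant in $y$ on each half-line and can be pulled out of the $y$-integrals. Using $G_j(t,x,|y|)=G_j(t,x,y)$ for $y>0$ together with the identity $G_\textnormal{free}(t,x,-y)=G_\textnormal{free}(t,-x,y)$, the negative-half contribution transforms into a linear combination of $\Psi_0(t,x;F^-)$, $\Psi_1(t,x;F^-;\omega_\pm)$, and $\Psi_\textnormal{free}(t,-x;F^-)$. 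Altogether $\Psi(t,x;F)$ becomes a finite linear combination of objects to which Lemma~\ref{lem_Psi} and Lemma~\ref{lem_Psi_x0} apply, with coefficients that are constant on each of the half-lines $\{x>0\}$ and $\{x<0\}$.

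The Schr\"odinger equation \eqref{Eq_Schroedinger_1} then follows at once: each $\Psi_j$ solves the free equation in its spatial argument by Lemma~\ref{lem_Psi}~(ii), the chain rule handles $\Psi_\textnormal{free}(t,-x;F^-)$, and the constancy of the $\mu$-coefficients on each half-line lets them pass through the derivatives. For the jump condition \eqref{Eq_Schroedinger_2}, Lemma~\ref{lem_Psi_x0} applied to each $\Psi_j$ (with $F$ and with $F^-$) allows me to move both the one-sided limits $x\to 0^\pm$ and the spatial derivatives at $x=0^\pm$ inside the integrals; the pointwise jump condition \eqref{Eq_Jump_condition_Greenfunction} for $G$ from Theorem~\ref{satz_Green_function} then survives integration against $F(y)dy$, yielding \eqref{Eq_Schroedinger_2} for $\Psi$.

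The initial condition \eqref{Eq_Schroedinger_3} is where the symmetric treatment of the two halves is essential, and I expect the main obstacle to be the bookkeeping here. For $x>0$, Lemma~\ref{lem_Psi}~(iii) gives $\Psi_\textnormal{free}(0^+,x;F)=F(x)$ while $\Psi_\textnormal{free}(0^+,-x;F^-)=0$ (since $-x<0$) and every $\Psi_0$, $\Psi_1$ contribution vanishes, so $\Psi(0^+,x;F)=F(x)$. For $x<0$ the roles swap: $\Psi_\textnormal{free}(0^+,x;F)=0$ while $\Psi_\textnormal{free}(0^+,-x;F^-)=F^-(-x)=F(x)$. The technical care lies in checking that, once the $\mu$-coefficients are pulled out and the identity for $G_\textnormal{free}$ is invoked, the negative-half contribution really does reorganize exactly into $\Psi_j$'s at the arguments $(t,x;F^-)$ and $(t,-x;F^-)$, so that no spurious terms remain in any of the three verifications (PDE, jump condition, and initial value).
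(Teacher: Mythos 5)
Your proposal reproduces the paper's proof essentially verbatim: the same substitution $y\mapsto -y$ with the mirrored function $\widetilde F(z)=F(-z)$ (your $F^-$), the same decomposition into $\Psi_j(t,x;F)$ and $\Psi_j$-type terms evaluated at $F^-$ (including the $\Psi_{\mathrm{free}}(t,-x;F^-)$ term via $G_{\mathrm{free}}(t,x,-y)=G_{\mathrm{free}}(t,-x,y)$), and the same appeal to Lemma~\ref{lem_Psi}~(i)--(iii) and Lemma~\ref{lem_Psi_x0} for existence, the PDE, the jump condition, and the initial value. Your explicit bookkeeping of the initial condition for $x>0$ versus $x<0$ is correct and simply spells out what the paper states more tersely.
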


\begin{proof}
For $y>0$ the Green's function \eqref{Eq_Greenfunction_decomposition} can be written as
\begin{equation*}
G(t,x,y)=\mu_+^{(x,0^+)}G_1(t,x,y;\omega_+)+\mu_-^{(x,0^+)}G_1(t,x,y;\omega_-)+\mu_0^{(x,0^+)}G_0(t,x,y)+G_\text{free}(t,x,y).
\end{equation*}
Hence we conclude from Lemma~\ref{lem_Psi}~(i) that the limit
\begin{equation*}
\begin{split}
\lim\limits_{R_2\to\infty}\int_0^{R_2}G(t,x,y)F(y)dy=&\mu_+^{(x,0^+)}\Psi_1(t,x;\omega_+,F)+\mu_-^{(x,0^+)}\Psi_1(t,x;\omega_-,F) \\
&+\mu_0^{(x,0^+)}\Psi_0(t,x;F)+\Psi_\text{free}(t,x;F)
\end{split}
\end{equation*}
exists. Moreover, for $y>0$ we also have
\begin{equation*}
G(t,x,-y)=\mu_+^{(x,0^-)}G_1(t,x,y;\omega_+)+\mu_-^{(x,0^-)}G_1(t,x,y;\omega_-)+\mu_0^{(x,0^-)}G_0(t,x,y)+G_\text{free}(t,-x,y),
\end{equation*}
where we used $G_\text{free}(t,x,-y)=G_\text{free}(t,-x,y)$, a direct consequence of \eqref{Eq_Part_Greenfunctionfree}. Again from Lemma \ref{lem_Psi}~(i) we conclude that also the limit
\begin{equation*}
\begin{split}
\lim\limits_{R_1\rightarrow\infty}\int_{-R_1}^0G(t,x,y)F(y)dy&=\lim\limits_{R_1\rightarrow\infty}\int_0^{R_1}G(t,x,-y)\widetilde{F}(y)dy \\
&=\mu_+^{(x,0^-)}\Psi_1(t,x;\omega_+,\widetilde{F})+\mu_-^{(x,0^-)}\Psi_1(t,x;\omega_-,\widetilde{F}) \\
&\quad+\mu_0^{(x,0^-)}\Psi_0(t,x;\widetilde{F})+\Psi_\text{free}(t,-x;\widetilde{F})
\end{split}
\end{equation*}
exists. Here we used the mirrored function $\widetilde{F}(z):= F(-z)$, which also satisfies the assumption \eqref{Eq_Exponential_boundedness},
since \eqref{Eq_Exponential_boundedness_doublesector} holds on the double sector $S_\alpha\cup(-S_\alpha)$.
This leads to the existence of the function $\Psi$ in \eqref{Eq_Psiqq} in the sense of \eqref{Eq_Psiq}, and also shows that it can be decomposed into
\begin{equation}\label{Eq_Psi_decomposition}
\begin{split}
\Psi(t,x;F)&=\mu_+^{(x,0^-)}\Psi_1(t,x;\omega_+,\widetilde{F})+\mu_+^{(x,0^+)}\Psi_1(t,x;\omega_+,F) \\
&\quad +\mu_-^{(x,0^-)}\Psi_1(t,x;\omega_-,\widetilde{F})+\mu_-^{(x,0^+)}\Psi_1(t,x;\omega_-,F) \\
&\quad +\mu_0^{(x,0^-)}\Psi_0(t,x;\widetilde{F})+\mu_0^{(x,0^+)}\Psi_0(t,x;F) \\
&\quad +\Psi_\text{free}(t,-x;\widetilde{F})+\Psi_\text{free}(t,x;F).
\end{split}
\end{equation}
Due to \eqref{Eq_Psi_DGL} the functions $\Psi_0$, $\Psi_1$, and $\Psi_\text{free}$ are solutions of the differential equation, and so is its linear combination $\Psi$ a solution of \eqref{Eq_Schroedinger_1}. Note, that the coefficients $\mu_\pm$ and $\mu_0$ only depend on the sign of $x$ and hence do not influence the differential equation. Moreover, although the term $\Psi_\text{free}(t,-x,\widetilde{F})$ depends on the variable $-x$, this function also solves \eqref{Eq_Schroedinger_1} since the $x$-derivative is of second order.

\medskip

In order to check the jump condition \eqref{Eq_Schroedinger_2} we notice that by Lemma \ref{lem_Psi_x0} we are allowed
to carry the limit $x\to 0^\pm$ inside the integral. Hence we get the representations
\begin{align*}
\Psi(t,0^\pm;F)&=\int_\mathbb{R}G(t,0^\pm,y)F(y)dy, \\
\frac{\partial}{\partial x}\Psi(t,0^\pm;F)&=\int_\mathbb{R}\frac{\partial}{\partial x}G(t,0^\pm,y)F(y)dy,
\end{align*}
also for the linear combination. Again, note that the negative $x$ argument of $\Psi_\text{free}(t,-x;\widetilde{F})$ does not matter, since $G_\text{free}(t,0^+,y)=G_\text{free}(t,0^-,y)$
by definition \eqref{Eq_Part_Greenfunctionfree}. Since $G$ satisfies the jump condition \eqref{Eq_Jump_condition_Greenfunction},
the function $\Psi$ satisfies the jump condition \eqref{Eq_Schroedinger_2}.
Finally, the initial values \eqref{Eq_Psi_initial_condition1} and \eqref{Eq_Psi_initial_condition2} imply the initial condition \eqref{Eq_Schroedinger_3}
of the wave function $\Psi$.
\end{proof}

In preparation for the analysis of superoscillations in the next section
we will now briefly discuss convergent sequences of initial conditions $(F_n)_n$ and the
convergence of the corresponding solutions $(\Psi(t,x;F_n))_n$ of the Schr\"odinger equation \eqref{Eq_Schroedinger}.
As before we shall first deal with the functions $\Psi_j$, $j\in\{0,1,\text{free}\}$, in \eqref{Eq_Psi} and assemble these
components afterwards to the whole wave function $\Psi$; cf. \eqref{Eq_Psi_decomposition} in the proof of Theorem~\ref{satz_Wave_function}.

\begin{lem}\label{lem_Psij_convergence}
Let $F,F_n:\Omega\rightarrow\mathbb{C}$, $n\in\mathbb{N}_0$, be holomorphic on an open set $\Omega\subseteq\mathbb{C}$
which contains the sector $S_\alpha$ from \eqref{Eq_Salpha} for some $\alpha\in(0,\frac{\pi}{2})$, and assume that
for some $A,B\geq 0$ and $A_n,B_n\geq 0$, $n\in\mathbb{N}_0$, the exponential bounds \eqref{Eq_Exponential_boundedness} hold. If the sequence $(F_n)_n$ converges as
\begin{equation}\label{Eq_Salpha_convergence}
\lim\limits_{n\rightarrow\infty}\sup\limits_{z\in S_\alpha}\big|F_n(z)-F(z)\big|e^{-C|z|}=0
\end{equation}
for some $C\geq 0$, then also the corresponding wave functions $\Psi_j$, $j\in\{0,1,\textnormal{free}\}$, in \eqref{Eq_Psi} converge as
\begin{equation}\label{Eq_Psij_convergence}
\lim\limits_{n\rightarrow\infty}\Psi_j(t,x;F_n)=\Psi_j(t,x;F),\qquad j\in\{0,1,\textnormal{free}\},
\end{equation}
uniformly on compact subsets of $(0,\infty)\times\mathbb{R}$.
\end{lem}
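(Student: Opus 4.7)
The plan is to reduce the estimate to the absolutely convergent Fresnel representation provided by Lemma~\ref{lem_Psi}~(i). Since both $F$ and each $F_n$ satisfy an exponential bound of the form \eqref{Eq_Exponential_boundedness} on the sector $S_\alpha$, that lemma allows us to write, for every $j\in\{0,1,\text{free}\}$,
\[
\Psi_j(t,x;F_n)-\Psi_j(t,x;F)=e^{i\alpha}\int_0^\infty G_j\bigl(t,x,ye^{i\alpha}\bigr)\bigl[F_n(ye^{i\alpha})-F(ye^{i\alpha})\bigr]\,dy,
\]
and the individual constants $A_n,B_n$ enter only to justify the representation for each fixed $n$, not the size of the difference.

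Next I would set $\varepsilon_n:=\sup_{z\in S_\alpha}|F_n(z)-F(z)|e^{-C|z|}$, so that hypothesis \eqref{Eq_Salpha_convergence} reads $\varepsilon_n\to 0$. On the ray $z=ye^{i\alpha}$ we have $|z|=y$, hence
\[
\bigl|F_n(ye^{i\alpha})-F(ye^{i\alpha})\bigr|\leq\varepsilon_n\,e^{Cy}.
\]
Combining this with the estimate \eqref{Eq_Estimate_1} from Lemma~\ref{cor_G_Estimate}, which on the same ray reduces to $|G_j(t,x,ye^{i\alpha})|\leq c_j(t)\,e^{-y^2\sin(2\alpha)/(4t)+|x|y\sin(\alpha)/(2t)}$, gives the key bound
\[
\bigl|\Psi_j(t,x;F_n)-\Psi_j(t,x;F)\bigr|\leq\varepsilon_n\,c_j(t)\int_0^\infty e^{-\frac{y^2\sin(2\alpha)}{4t}+y\bigl(C+\frac{|x|\sin(\alpha)}{2t}\bigr)}\,dy.
\]

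The Gaussian integral on the right is finite for every $t>0$ since $\sin(2\alpha)>0$, and by the explicit identity \eqref{Eq_Lambda_Integral} it equals
\[
\sqrt{\frac{\pi t}{\sin(2\alpha)}}\,\Lambda\!\left(-\Bigl(C+\tfrac{|x|\sin(\alpha)}{2t}\Bigr)\sqrt{\tfrac{t}{\sin(2\alpha)}}\right),
\]
which, together with $c_j(t)$, is a continuous function of $(t,x)\in(0,\infty)\times\mathbb R$. Hence, on any compact subset $K\subset(0,\infty)\times\mathbb R$ the whole right-hand side is bounded by $\varepsilon_n M_K$ for some constant $M_K<\infty$, and letting $n\to\infty$ yields the asserted uniform convergence \eqref{Eq_Psij_convergence} on $K$.

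I do not anticipate any serious obstacle: all the pointwise ingredients (the Fresnel rotation representation, the $\Lambda$-type bound on $G_j$ along the rotated ray, and the growth control on $F_n-F$) are already in place, and the argument is a clean ``$\varepsilon_n\times$(locally bounded constant)'' estimate. The only mildly delicate point is that the hypothesis controls $F_n-F$ only through the common weight $e^{C|z|}$ without any reference to the individual bounds $A_n,B_n$, but this is precisely what makes the bound uniform in $n$ once the Gaussian decay of $G_j$ along $e^{i\alpha}$ is brought in.
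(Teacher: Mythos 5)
Your proof is correct and coincides with the paper's argument: both pass to the Fresnel‑rotated absolutely convergent representation from Lemma~\ref{lem_Psi}~(i), bound $|F_n-F|$ by $\varepsilon_n e^{C|z|}$, combine with the Green's‑function estimate \eqref{Eq_Estimate_1}, evaluate the resulting Gaussian integral via \eqref{Eq_Lambda_Integral}, and conclude from the joint continuity in $(t,x)$ of the resulting bound. The only differences are notational (your $\varepsilon_n$ is the paper's $C_n$, and the $\Lambda$-argument is written slightly differently but is algebraically identical).
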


\begin{proof}
First of all, we have the estimate
\begin{equation*}
|F_n(z)-F(z)|\leq C_ne^{C|z|},\qquad z\in S_\alpha,
\end{equation*}
where $C_n:=\sup_{z\in S_\alpha}|F_n(z)-F(z)|e^{-C|z|}$. Using the representation \eqref{Eq_Psi3},
this inequality together with the estimate \eqref{Eq_Estimate_1} of the
Green's function, leads to
\begin{align*}
\big|\Psi_j(t,x;F_n)-\Psi_j(t,x;F)\big|&=\Big|\int_0^\infty G_j(t,x,ye^{i\alpha})\big(F_n(ye^{i\alpha})-F(ye^{i\alpha})\big)dy\Big| \\
&\leq C_nc_j(t)\int_0^\infty e^{-\frac{y^2\sin(2\alpha)}{4t}+\frac{|x|y\sin(\alpha)}{2t}}e^{Cy}dy \\
&=C_n\frac{c_j(t)\sqrt{\pi t}}{\sqrt{\sin(2\alpha)}}\Lambda\Big(-\frac{|x|\sqrt{\tan(\alpha)}}{2\sqrt{2t}}-\frac{C\sqrt{t}}{\sqrt{\sin(2\alpha)}}\Big),
\end{align*}
where in the last line we used the integral \eqref{Eq_Lambda_Integral}. Since the right hand side of this inequality is
continuous in $t\in (0,\infty)$ and $x\in\mathbb R$, and we have $C_n\to 0$ by the assumption \eqref{Eq_Salpha_convergence},
the uniform convergence \eqref{Eq_Psij_convergence} on compact subsets of $(0,\infty)\times\mathbb{R}$ follows.
\end{proof}

Lemma~\ref{lem_Psij_convergence} now leads to the following theorem, which is an important ingredient in the next section.

\begin{satz}\label{satz_Psi_convergence}
Let $F,F_n:\Omega\rightarrow\mathbb{C}$, $n\in\mathbb N_0$, be holomorphic on an open set $\Omega\subseteq\mathbb{C}$
which contains the double sector $S_\alpha\cup(-S_\alpha)$ from \eqref{Eq_Doublesector} for some $\alpha\in(0,\frac{\pi}{2})$, and assume that
for some $A,B\geq 0$ and $A_n,B_n\geq 0$, $n\in\mathbb N_0$, the exponential bounds \eqref{Eq_Exponential_boundedness_doublesector} hold.
If the sequence $(F_n)_n$ converges as
\begin{equation}\label{Eq_Doublesector_convergence}
\lim\limits_{n\rightarrow\infty}\sup\limits_{z\in S_\alpha\cup(-S_\alpha)}\big|F_n(z)-F(z)\big|e^{-C|z|}=0
\end{equation}
for some $C\geq 0$, then also the corresponding wave functions $\Psi$ in \eqref{Eq_Psiqq} converge as
\begin{equation}\label{Eq_Psi_convergence}
\lim\limits_{n\rightarrow\infty}\Psi(t,x;F_n)=\Psi(t,x;F),
\end{equation}
uniformly on compact subsets of $(0,\infty)\times\mathbb{R}$.
\end{satz}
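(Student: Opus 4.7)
The plan is to reduce the claim to Lemma \ref{lem_Psij_convergence} via the decomposition \eqref{Eq_Psi_decomposition} established in the proof of Theorem \ref{satz_Wave_function}. That identity writes $\Psi(t,x;H)$, for any admissible initial datum $H$, as a finite linear combination of functions of the form $\Psi_j(t,\pm x;\cdot)$, $j\in\{0,1,\textnormal{free}\}$, evaluated at $H$ and at the reflected function $\widetilde H(z):=H(-z)$; the coefficients depend only on $\mathrm{sgn}(x)$ and are therefore uniformly bounded. Applying this to $H=F_n$ and to $H=F$ and subtracting, the difference $\Psi(t,x;F_n)-\Psi(t,x;F)$ becomes a finite sum of terms of the form $\mu\cdot\bigl(\Psi_j(t,\pm x;K_n)-\Psi_j(t,\pm x;K)\bigr)$ with $(K_n,K)$ equal to either $(F_n,F)$ or $(\widetilde{F_n},\widetilde F)$, so it suffices to show that each such $\Psi_j$-difference tends to $0$ uniformly on compact subsets of $(0,\infty)\times\mathbb{R}$.

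For the pair $(F_n,F)$ the required uniform convergence follows immediately from Lemma \ref{lem_Psij_convergence}, since the double-sector hypotheses \eqref{Eq_Exponential_boundedness_doublesector} and \eqref{Eq_Doublesector_convergence} restrict to the single-sector hypotheses \eqref{Eq_Exponential_boundedness} and \eqref{Eq_Salpha_convergence} on $S_\alpha$. For the pair $(\widetilde{F_n},\widetilde F)$ I would exploit the fact that the map $z\mapsto -z$ is a bijection between $S_\alpha$ and $-S_\alpha$ which preserves both $|\Im z|$ and $|z|$. Consequently, the bounds \eqref{Eq_Exponential_boundedness_doublesector} on $-S_\alpha$ yield $|\widetilde{F_n}(z)|\leq A_n e^{B_n|\Im z|}$ on $S_\alpha$, and analogously for $\widetilde F$, while
\[
\sup_{z\in S_\alpha}\bigl|\widetilde{F_n}(z)-\widetilde F(z)\bigr|e^{-C|z|}=\sup_{w\in -S_\alpha}\bigl|F_n(w)-F(w)\bigr|e^{-C|w|}\longrightarrow 0,
\]
so that $\widetilde{F_n}\to\widetilde F$ also satisfies the hypotheses of Lemma \ref{lem_Psij_convergence}.

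Combining these uniform convergences and using the boundedness of the piecewise-constant coefficients yields \eqref{Eq_Psi_convergence} uniformly on compact subsets of $(0,\infty)\times(\mathbb{R}\setminus\{0\})$; splitting an arbitrary compact subset of $(0,\infty)\times\mathbb{R}$ into its intersections with $\{x>0\}$ and $\{x<0\}$ completes the argument, recalling that $\Psi$ is only defined on $\mathbb{R}\setminus\{0\}$. I do not anticipate a genuine obstacle here: the analytic heart of the matter already lives in Lemma \ref{lem_Psij_convergence}, and the present statement is essentially a bookkeeping exercise that exploits the decomposition \eqref{Eq_Psi_decomposition} together with the symmetry $z\mapsto -z$. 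The only mildly delicate point is checking that the $-S_\alpha$-portion of the hypotheses transforms cleanly into $S_\alpha$-hypotheses for the reflected sequence, which is the short computation displayed above.
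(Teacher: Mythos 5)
Your proof follows exactly the paper's argument: decompose $\Psi$ via \eqref{Eq_Psi_decomposition}, observe that the double-sector hypotheses on $F_n,F$ restrict to single-sector hypotheses for both $(F_n,F)$ and the mirrored pair $(\widetilde{F_n},\widetilde F)$, and apply Lemma~\ref{lem_Psij_convergence} term by term. You merely spell out the short reflection computation that the paper dismisses as ``clear,'' and the closing remark about $x=0$ is an unnecessary complication since the dominating bound in Lemma~\ref{lem_Psij_convergence} is already continuous in $x\in\mathbb{R}$.
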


\begin{proof}
It is clear that the convergence \eqref{Eq_Doublesector_convergence} of the functions $F_n$, $n\in\mathbb N_0$,
implies the same convergence of the mirrored functions $\widetilde{F}_n(z)=F_n(-z)$, $n\in\mathbb N_0$, and $\widetilde{F}(z)=F(-z)$.
Since the function $\Psi$ can be decomposed in the form \eqref{Eq_Psi_decomposition},
the convergence \eqref{Eq_Psi_convergence} follows immediately from Lemma \ref{lem_Psij_convergence}.
\end{proof}

\section{Superoscillatory initial data and plane wave asymptotics}\label{sec_Time_evolution_of_superoscillations}

In this section we allow superoscillatory functions as initial data in the Schr\"odinger equation \eqref{Eq_Schroedinger} and we
show that the corresponding solutions converge uniformly on compact sets. To discuss the oscillatory properties of these solutions
we study the long time asymptotics of the plane
wave solution in Theorem~\ref{satz_Asymptotics} and Remark~\ref{lastrem}, where the expected oscillatory behaviour and also possible stationary terms
reflecting negative bound states of the singular potential are identified.

Superoscillating functions are band-limited functions that can oscillate faster than their fastest Fourier component. This is made precise in the next definition.

\begin{defi}[Superoscillations]\label{defi_Superoscillations}
A {\em generalized Fourier sequence} is a sequence of functions $(F_n)_n$, $n\in\mathbb{N}_0$, of the form
\begin{equation}\label{basic_sequence}
F_n(x)= \sum_{j=0}^n c_j(n)e^{ik_j(n)x},\qquad x\in\mathbb{R},
\end{equation}
with $k_j(n)\in\mathbb{R}$ and $c_j(n)\in\mathbb{C}$, $j\in\{0,\dots,n\}$.
A generalized Fourier sequence $(F_n)_n$ is said to be {\em superoscillating}, if:

\begin{enumerate}
\item[{\rm (i)}] There exists some $k\in\mathbb{R}$ such that $$\sup_{n\in\mathbb{N}_0,\, j\in\{0,\dots,n\}}|k_j(n)|<\vert k\vert.$$

\item[{\rm (ii)}] There exists a compact subset $K\subset \mathbb{R}$, called {\rm superoscillation set}, such that
\begin{equation}\label{Eq_Compact_convergence}
\lim\limits_{n\rightarrow\infty}\sup\limits_{x\in K}\big| F_n(x) - e^{ikx}\big|=0.
\end{equation}
\end{enumerate}
\end{defi}

In the next corollary, which is a simple consequence of Theorem~\ref{satz_Psi_convergence}, it will be shown that superoscillating initial data $(F_n)_n$
(with a slightly stronger convergence property) leads to solutions $(\Psi(t,x;F_n))_n$ that converge on compact subsets for all times $t>0$.
We mention that the characteristic superoscillatory behaviour of the functions $(F_n)_n$ is on a compact set $K$ in \eqref{Eq_Compact_convergence},
but this is not enough to ensure the same convergence
for the sequence of solutions $(\Psi(t,x;F_n))_n$. As the functions \eqref{basic_sequence}
admit entire extensions to the whole complex plane,
\begin{equation}\label{Eq_Entire_extension}
F_n(z)=\sum_{j=0}^n c_j(n)e^{ik_j(n)z},\qquad z\in\mathbb{C},
\end{equation}
a fact that is also important for several considerations related with the so-called supershift property, it is meaningful to assume the
stronger convergence
\begin{equation}\label{Eq_Superoscillations_convergence}
\lim\limits_{n\rightarrow\infty}\sup\limits_{z\in\mathbb{C}}\big|F_n(z)-e^{ikz}\big|e^{-C|z|}=0
\end{equation}
for some $C\geq 0$.
Note, that in our standard example of superoscillating functions in \eqref{Eq_Fn} one indeed has this kind of
uniform convergence; cf. \cite[Theorem 2.1]{CSSYgenfun} and \cite{acsst5} for more details.

\begin{cor}[Stability of superoscillations]\label{cor_Stability_of_superoscillations}
Let the sequence $(F_n)_n$, $n\in\mathbb{N}_0$,  be superoscillating in the sense of Definition \ref{defi_Superoscillations} and assume, in addition,
that their entire extensions \eqref{Eq_Entire_extension} converge as in \eqref{Eq_Superoscillations_convergence}
for some $C\geq 0$. Then also the corresponding solutions of \eqref{Eq_Schroedinger} converge as
\begin{equation*}
\lim\limits_{n\rightarrow\infty}\Psi(t,x;F_n)=\Psi(t,x;e^{ik\,\cdot\,}),
\end{equation*}
uniformly on compact subsets of $(0,\infty)\times\mathbb{R}$.
\end{cor}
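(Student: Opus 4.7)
The plan is to deduce this corollary as a direct application of Theorem~\ref{satz_Psi_convergence} with limit function $F(z)=e^{ikz}$ and the given sequence $(F_n)_n$. Accordingly, three items need to be verified: (a) holomorphy on an open set containing some double sector $S_\alpha\cup(-S_\alpha)$, (b) the exponential bound \eqref{Eq_Exponential_boundedness_doublesector}, and (c) the convergence \eqref{Eq_Doublesector_convergence}.

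For (a) there is nothing to do: the extensions \eqref{Eq_Entire_extension} and the plane wave $e^{ikz}$ are entire on $\mathbb{C}$, so one may take $\Omega=\mathbb{C}$ and any $\alpha\in(0,\frac{\pi}{2})$. For (b), the pointwise estimate $|e^{ikz}|=e^{-k\Im(z)}\leq e^{|k||\Im(z)|}$ supplies constants $A=1$, $B=|k|$ for the limit function, while the triangle inequality applied to \eqref{Eq_Entire_extension} gives
\[
|F_n(z)|\leq\Bigl(\sum_{j=0}^n|c_j(n)|\Bigr)e^{K_n|\Im(z)|},\qquad K_n:=\max_{0\leq j\leq n}|k_j(n)|,
\]
which furnishes the required constants $A_n,B_n\geq 0$. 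Note that condition~(i) of Definition~\ref{defi_Superoscillations} in fact yields the uniform bound $K_n<|k|$, but Theorem~\ref{satz_Psi_convergence} does not require this. For (c), the inclusion $S_\alpha\cup(-S_\alpha)\subseteq\mathbb{C}$ gives
\[
\sup_{z\in S_\alpha\cup(-S_\alpha)}\bigl|F_n(z)-e^{ikz}\bigr|e^{-C|z|}\leq\sup_{z\in\mathbb{C}}\bigl|F_n(z)-e^{ikz}\bigr|e^{-C|z|},
\]
and the right-hand side tends to zero as $n\to\infty$ by the hypothesis \eqref{Eq_Superoscillations_convergence}.

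With (a)--(c) in hand, Theorem~\ref{satz_Psi_convergence} delivers the claimed convergence $\Psi(t,x;F_n)\to\Psi(t,x;e^{ik\,\cdot\,})$ uniformly on compact subsets of $(0,\infty)\times\mathbb{R}$. No serious obstacle arises; the corollary is essentially a repackaging of the abstract stability theorem in the superoscillatory setting. The only conceptual point worth emphasizing is that the compact-set convergence \eqref{Eq_Compact_convergence} built into Definition~\ref{defi_Superoscillations} is by itself too weak to propagate through the Green's function, and it is precisely the strengthened, globally exponentially weighted convergence \eqref{Eq_Superoscillations_convergence} of the entire extensions that is needed to feed into the Fresnel-integral estimates underlying Theorem~\ref{satz_Psi_convergence}.
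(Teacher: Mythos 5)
Your proof is correct and takes essentially the same route as the paper: both verify that the entire extensions \eqref{Eq_Entire_extension} satisfy the exponential bound \eqref{Eq_Exponential_boundedness_doublesector} via the triangle inequality, and then feed the global convergence \eqref{Eq_Superoscillations_convergence} into Theorem~\ref{satz_Psi_convergence}. Your exposition is somewhat more explicit about checking each hypothesis separately, but the argument is identical in substance.
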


\begin{proof}
In order to apply Theorem \ref{satz_Psi_convergence}, we first note that \eqref{Eq_Entire_extension} implies the estimate
\begin{equation*}
|F_n(z)|\leq\sum\limits_{j=0}^n|c_j(n)|e^{\Re(ik_j(n)z)}\leq\sum\limits_{j=0}^n|c_j(n)|e^{|k_j(n)|\,|\Im(z)|},\qquad z\in\mathbb{C}.
\end{equation*}
Together with the convergence \eqref{Eq_Superoscillations_convergence} this means that the functions $F_n$
satisfy the assumptions of Theorem~\ref{satz_Psi_convergence} for any $\alpha\in(0,\frac{\pi}{2})$, and hence the statement follows.
\end{proof}

To analyse the oscillatory behaviour of the functions $\Psi(t,x;F_n)$ and $\Psi(t,x;e^{ik\,\cdot\,})$ in Corollary~\ref{cor_Stability_of_superoscillations}
it is useful compute the explicit form of the
plane wave solution $\Psi(t,x;e^{ik\,\cdot\,})$ and to provide its long time asymptotics.

\begin{prop}\label{prop_Plane_wave_solution}
For every $k\in\mathbb{R}$ the solution of the Schr\"odinger equation \eqref{Eq_Schroedinger} with initial condition $F(x)=e^{ikx}$ is given by
\begin{equation}\label{Eq_Plane_wave_solution}
\begin{split}
\Psi(t,x;e^{ik\,\cdot})=&\left(\frac{\mu_+^{(x,0^+)}}{\omega_++ik}+\frac{\mu_-^{(x,0^+)}}{\omega_-+ik}+\frac{\mu_0^{(x,0^+)}}{2}\right)e^{-\frac{x^2}{4it}}\Lambda\Big(\frac{|x|}{2\sqrt{it}}-ik\sqrt{it}\Big) \\
&+\left(\frac{\mu_+^{(x,0^-)}}{\omega_+-ik}+\frac{\mu_-^{(x,0^-)}}{\omega_--ik}+\frac{\mu_0^{(x,0^-)}}{2}\right)e^{-\frac{x^2}{4it}}\Lambda\Big(\frac{|x|}{2\sqrt{it}}+ik\sqrt{it}\Big) \\
&-\sum\limits_{j=\pm}\left(\frac{\mu_j^{(x,0^-)}}{\omega_j-ik}+\frac{\mu_j^{(x,0^+)}}{\omega_j+ik}\right)e^{-\frac{x^2}{4it}}\Lambda\Big(\frac{|x|}{2\sqrt{it}}+\omega_j\sqrt{it}\Big) \\
&+e^{ikx-ik^2t},
\end{split}
\end{equation}
using the coefficients $\mu_0$, $\mu_\pm$, and $\omega_\pm$ from Theorem \ref{satz_Green_function}.
In the special case $k=0$ this formula is understood in the sense that $\mu_j^{(x,0^\pm)}/\omega_j=0$, whenever $\omega_j=0$.
\end{prop}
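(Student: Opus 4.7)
The plan is to compute $\Psi(t,x;e^{ik\,\cdot\,})$ by invoking Theorem~\ref{satz_Wave_function} with $F(y)=e^{iky}$ (which satisfies \eqref{Eq_Exponential_boundedness_doublesector} with $A=1$, $B=|k|$, on any double sector $S_\alpha\cup(-S_\alpha)$ with $\alpha\in(0,\tfrac{\pi}{2})$, so the hypotheses are met) and then using the additive decomposition \eqref{Eq_Psi_decomposition} derived in the proof of that theorem. With $\widetilde F(y)=F(-y)=e^{-iky}$, this reduces the problem to computing the six building blocks $\Psi_j(t,x;\omega_\pm,e^{ik\,\cdot\,})$, $\Psi_j(t,x;\omega_\pm,e^{-ik\,\cdot\,})$, $\Psi_0(t,x;e^{\pm ik\,\cdot\,})$, plus the free terms $\Psi_\text{free}(t,x;e^{ik\,\cdot\,})$ and $\Psi_\text{free}(t,-x;e^{-ik\,\cdot\,})$, and assembling them.

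For each building block I would take the Fresnel representation \eqref{Eq_Psi3} and then apply the integral identities \eqref{Eq_Lambda_Integral} and \eqref{Eq_Lambda_Integral_2} from Lemma~\ref{lem_Lambda_Properties}~(v). Concretely, pulling the prefactor $e^{-x^2/(4it)}$ out of $G_0$ and $G_1$ and setting $a=1/(4it)$, $b=-ik+|x|/(2it)$, one gets $b/(2\sqrt{a})=|x|/(2\sqrt{it})-ik\sqrt{it}$, which explains the arguments of $\Lambda$ appearing in \eqref{Eq_Plane_wave_solution}. A short calculation yields
\begin{equation*}
\Psi_0(t,x;e^{ik\,\cdot\,})=\tfrac{1}{2}\,e^{-\frac{x^2}{4it}}\Lambda\!\left(\tfrac{|x|}{2\sqrt{it}}-ik\sqrt{it}\right),
\end{equation*}
and, using \eqref{Eq_Lambda_Integral_2} with $c=|x|/(2\sqrt{it})+\omega\sqrt{it}$, so that $c-b/(2\sqrt{a})=(\omega+ik)\sqrt{it}$,
\begin{equation*}
\Psi_1(t,x;\omega,e^{ik\,\cdot\,})=\frac{e^{-\frac{x^2}{4it}}}{\omega+ik}\!\left[\Lambda\!\left(\tfrac{|x|}{2\sqrt{it}}-ik\sqrt{it}\right)-\Lambda\!\left(\tfrac{|x|}{2\sqrt{it}}+\omega\sqrt{it}\right)\right].
\end{equation*}
Replacing $k\mapsto -k$ gives the corresponding expressions for the $\widetilde F$ inputs, producing the $\Lambda(|x|/(2\sqrt{it})+ik\sqrt{it})$ terms. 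For the free part, combining the two free contributions via the substitution $y\mapsto -y$ in $\Psi_\text{free}(t,-x;e^{-ik\,\cdot\,})$ merges the two half-line integrals into a single integral over $\mathbb{R}$ of the standard heat/Schr\"odinger kernel against a plane wave, yielding exactly $e^{ikx-ik^2 t}$.

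With these formulas in hand, the final step is bookkeeping: grouping the summands in \eqref{Eq_Psi_decomposition} by the $\Lambda$-argument they contain. The terms proportional to $\Lambda(|x|/(2\sqrt{it})-ik\sqrt{it})$ come from $\Psi_1(\cdot;\omega_\pm,F)$ and $\Psi_0(\cdot;F)$ and contribute the bracket $\mu_+^{(x,0^+)}/(\omega_++ik)+\mu_-^{(x,0^+)}/(\omega_-+ik)+\mu_0^{(x,0^+)}/2$; symmetrically for $\Lambda(|x|/(2\sqrt{it})+ik\sqrt{it})$ with the $0^-$ coefficients and $\omega_j-ik$ in the denominators. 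The $\Lambda(|x|/(2\sqrt{it})+\omega_j\sqrt{it})$ terms collect only from the $\Psi_1$ pieces and assemble into the displayed subtracted sum over $j=\pm$. Adding $e^{ikx-ik^2t}$ completes the identification with \eqref{Eq_Plane_wave_solution}.

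The only genuine technical point is that the identities \eqref{Eq_Lambda_Integral}--\eqref{Eq_Lambda_Integral_2} are stated in Lemma~\ref{lem_Lambda_Properties}~(v) for real $a>0$, whereas we need them for $a=1/(4it)$, which is purely imaginary. I would justify the extension by the same Fresnel rotation used throughout Section~\ref{sec_Solution_of_the_Schroedinger_equation}: after writing the integrand on the rotated ray $y=re^{i\alpha}$ the effective coefficient becomes $a'=ae^{2i\alpha}$ with $\mathrm{Re}(a')>0$, so the real-variable identity applies, and the Jacobian $e^{i\alpha}$ cancels against the corresponding $e^{i\alpha}$ in $\sqrt{a'}=\sqrt{a}\,e^{i\alpha}$, so that $\tfrac{b'}{2\sqrt{a'}}=\tfrac{b}{2\sqrt{a}}$ and the right-hand side takes exactly the form claimed above. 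This justification, combined with the observation that the problematic denominator $\omega_j\pm ik$ can vanish only when $\omega_j=0$ (since $\omega_j,k\in\mathbb{R}$), in which case Cases~II and~III above Theorem~\ref{satz_Green_function} force the corresponding $\mu_j^{(x,y)}$ to vanish as well (so the convention $\mu_j/\omega_j=0$ is consistent, with the degenerate second branch of \eqref{Eq_Lambda_Integral_2} involving $\Lambda'$ never actually needing to be used), completes the proof.
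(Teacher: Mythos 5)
Your proposal is correct and mirrors the paper's proof: you compute the building blocks $\Psi_0$, $\Psi_1$, $\Psi_{\text{free}}$ via the rotated-ray representation \eqref{Eq_Psi3} and the $\Lambda$-integral identities \eqref{Eq_Lambda_Integral}--\eqref{Eq_Lambda_Integral_2}, note that the degenerate case $\omega_j=0$ is moot because then $\mu_j^{(x,0^\pm)}=0$, and assemble everything through the decomposition \eqref{Eq_Psi_decomposition}. The only stylistic variation is that you obtain the $e^{ikx-ik^2t}$ term by merging the two half-line free integrals into the full-line free propagator applied to a plane wave, whereas the paper expresses each $\Psi_{\text{free}}$ piece as a $\Lambda$-function and then invokes the reflection identity \eqref{Eq_Lambda_Negative}; the two routes are equivalent.
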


\begin{proof}
We start by calculating the functions $\Psi_j(t,x,e^{ik\,\cdot\,})$ for $j\in\{0,1,\text{free}\}$ from \eqref{Eq_Psi}. Since the holomorphic continuation $F(z)=e^{ikz}$ of the initial condition satisfies the assumption (\ref{Eq_Exponential_boundedness}) for the special choice $\alpha=\frac{\pi}{4}$, we can use the absolute convergent integral representation (\ref{Eq_Psi3}). For the functions $\Psi_0$ and $\Psi_\text{free}$ we now use the integral identity (\ref{Eq_Lambda_Integral}) to get
\begin{equation*}
\Psi_0\big(t,x;e^{\pm ik\,\cdot}\big)=\frac{1}{2\sqrt{\pi t}}\int_0^\infty e^{-\frac{(|x|+y\sqrt{i})^2}{4it}}e^{\pm iky\sqrt{i}}dy=\frac{1}{2}e^{-\frac{x^2}{4it}}\Lambda\Big(\frac{|x|}{2\sqrt{it}}\mp ik\sqrt{it}\Big),
\end{equation*}
as well as
\begin{equation*}
\Psi_\text{free}\big(t,x;e^{\pm ik\,\cdot}\big)=\frac{1}{2\sqrt{\pi t}}\int_0^\infty e^{-\frac{(x-y\sqrt{i})^2}{4it}}e^{\pm iky\sqrt{i}}dy=\frac{1}{2}e^{-\frac{x^2}{4it}}\Lambda\Big(\frac{-x}{2\sqrt{it}}\mp ik\sqrt{it}\Big).
\end{equation*}
For the function $\Psi_1$ we use the integral (\ref{Eq_Lambda_Integral_2}) to get, at least for $\omega$ and $k$ not both vanishing, the explicit solution
\begin{align*}
\Psi_1\big(t,x;\omega,e^{\pm ik\,\cdot}\big)&=\sqrt{i}\int_0^\infty\Lambda\Big(\frac{|x|+y\sqrt{i}}{2\sqrt{it}}+\omega\sqrt{it}\Big)e^{-\frac{(|x|+y\sqrt{i})^2}{4it}}e^{\pm iky\sqrt{i}}dy \\
&=\frac{e^{-\frac{x^2}{4it}}}{\omega\pm ik}\left(\Lambda\Big(\frac{|x|}{2\sqrt{it}}\mp ik\sqrt{it}\Big)-\Lambda\Big(\frac{|x|}{2\sqrt{it}}+\omega\sqrt{it}\Big)\right).
\end{align*}
For the very special case $\omega=k=0$ the second integral in \eqref{Eq_Lambda_Integral_2} gives
\begin{equation*}
\Psi_1(t,x;0,1)=\sqrt{i}\int_0^\infty\Lambda\Big(\frac{|x|+y\sqrt{i}}{2\sqrt{it}}\Big)e^{-\frac{(|x|+y\sqrt{i})^2}{4it}}dy
=-\sqrt{it}\,\Lambda'\left(\frac{|x|}{2\sqrt{it}}\right)e^{-\frac{x^2}{4it}};
\end{equation*}
here, however, the precise value of the integral is not needed since $\mu_j^{(x,y)}=0$, $j=\pm$, whenever $\omega_j=0$ in Cases I-III
in Theorem \ref{satz_Green_function}, and thus the corresponding term in the decomposition \eqref{Eq_Psi_decomposition} is absent.

Assembling now all these terms as in the decomposition \eqref{Eq_Psi_decomposition} and using the identity \eqref{Eq_Lambda_Negative}
for the terms involving $\Psi_\text{free}$ gives \eqref{Eq_Plane_wave_solution}.
\end{proof}

In the next theorem the long time asymptotics of the plane wave solution in Proposition~\ref{prop_Plane_wave_solution} is found.
While the exponentially decaying $e^{\omega_j|x|}$-terms in \eqref{Eq_varphi_k_asymptotic} and \eqref{Eq_varphi_k0_asymptotic} are due
to negative bound states (see Remark~\ref{lastrem}),
the oscillating terms $e^{ikx}$ and $e^{i|kx|}$ in the first line of \eqref{Eq_varphi_k_asymptotic}, or their absence in \eqref{Eq_varphi_k0_asymptotic},
show that the solution $\Psi(t,x;e^{ik\,\cdot\,})$ oscillates with frequency $k$.
Therefore, roughly speaking, the sequence $(\Psi(t,x;F_n))_n$ shows the characteristic superoscillatory property
since the functions $\Psi(t,x;F_n)$ oscillate with
the frequencies $k_j(n)$ and the limit function $\Psi(t,x;e^{ik\,\cdot\,})$ oscillates with the larger frequency $k$.

\begin{satz}\label{satz_Asymptotics}
For every $k\in\mathbb{R}\setminus\{0\}$ the solution of the Schr\"odinger equation \eqref{Eq_Schroedinger} with initial condition $F(x)=e^{ikx}$ admits the long time asymptotics
\begin{equation}\label{Eq_varphi_k_asymptotic}
\begin{split}
\Psi(t,x;e^{ik\,\cdot})=&e^{ikx-ik^2t}+2\left(\frac{\mu_+^{(x,-k)}}{\omega_+-i|k|}+\frac{\mu_-^{(x,-k)}}{\omega_--i|k|}+\frac{\mu_0^{(x,-k)}}{2}\right)e^{i|kx|-ik^2t} \\
&-\sum\limits_{j=\pm}\left(\frac{\mu_j^{(x,0^-)}}{\omega_j-ik}+\frac{\mu_j^{(x,0^+)}}{\omega_j+ik}\right)2\Theta(-\omega_j)e^{\omega_j|x|+i\omega_j^2t}+\mathcal{O}\Big(\frac{1}{\sqrt{t}}\Big),
\end{split}
\end{equation}
as $t\to\infty$, using the coefficients $\mu_0$, $\mu_\pm$, and $\omega_\pm$ from Theorem \ref{satz_Green_function}. Moreover, for $k=0$ we get the similar expansion
\begin{equation}\label{Eq_varphi_k0_asymptotic}
\begin{split}
\Psi(t,x;1)=&\frac{\mu_+^{(x,0^+)}+\mu_+^{(x,0^-)}}{\omega_+}+\frac{\mu_-^{(x,0^+)}+\mu_-^{(x,0^-)}}{\omega_-}+\frac{\mu_0^{(x,0^+)}+\mu_0^{(x,0^-)}}{2}+1 \\
&-\sum\limits_{j=\pm}\frac{\mu_j^{(x,0^-)}+\mu_j^{(x,0^+)}}{\omega_j}2\Theta(-\omega_j)e^{\omega_j|x|+i\omega_j^2t}+\mathcal{O}\Big(\frac{1}{\sqrt{t}}\Big),
\end{split}
\end{equation}
as $t\rightarrow\infty$. The formula \eqref{Eq_varphi_k0_asymptotic} is understood in the sense that $\mu_j^{(x,0^\pm)}/\omega_j= 0$, whenever $\omega_j=0$.
\end{satz}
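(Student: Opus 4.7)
The plan is to substitute the asymptotic expansion \eqref{Eq_Lambda_Asymptotic} of $\Lambda$ directly into the explicit representation of $\Psi(t,x;e^{ik\,\cdot})$ given in Proposition~\ref{prop_Plane_wave_solution} and then combine the four resulting contributions. Writing $\sqrt{it}=\sqrt{t/2}\,(1+i)$, one computes for arbitrary $\kappa\in\mathbb{C}$ that
\begin{equation*}
\Re\Big(\frac{|x|}{2\sqrt{it}}+\kappa\sqrt{it}\Big)=\frac{|x|}{2\sqrt{2t}}+\bigl(\Re(\kappa)-\Im(\kappa)\bigr)\sqrt{t/2},
\end{equation*}
so that for the three relevant choices $\kappa\in\{-ik,\,ik,\,\omega_j\}$ the real parts are asymptotically dominated by $k\sqrt{t/2}$, $-k\sqrt{t/2}$, and $\omega_j\sqrt{t/2}$, respectively. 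According to \eqref{Eq_Lambda_Asymptotic}, each corresponding $\Lambda$-value is either $\mathcal{O}(1/\sqrt{t})$ (when its real part tends to $+\infty$) or equals $2e^{z^2}+\mathcal{O}(1/\sqrt{t})$ (when it tends to $-\infty$).

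Next I would compute the exponents $z^2$ explicitly. A direct calculation gives
\begin{equation*}
\Big(\frac{|x|}{2\sqrt{it}}\mp ik\sqrt{it}\Big)^{2}=-\frac{ix^2}{4t}\mp ik|x|-ik^2t\quad\text{and}\quad\Big(\frac{|x|}{2\sqrt{it}}+\omega_j\sqrt{it}\Big)^{2}=-\frac{ix^2}{4t}+\omega_j|x|+i\omega_j^2t,
\end{equation*}
so that after multiplication with the prefactor $e^{-x^2/(4it)}=e^{ix^2/(4t)}$ the $x^2$-parts cancel and one obtains the clean factors $e^{\mp ik|x|-ik^2t}$ and $e^{\omega_j|x|+i\omega_j^2t}$. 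For $k\neq 0$ exactly one of the arguments $\frac{|x|}{2\sqrt{it}}\mp ik\sqrt{it}$ has real part tending to $-\infty$: for $k<0$ it is the first (so only the first line of \eqref{Eq_Plane_wave_solution} contributes at leading order), for $k>0$ it is the second. The two cases unify into a single expression by using the identity $-ik=-i|k|$ for $k>0$ and $+ik=-i|k|$ for $k<0$, together with $\mu_j^{(x,-k)}=\mu_j^{(x,0^-)}$ for $k>0$ and $\mu_j^{(x,-k)}=\mu_j^{(x,0^+)}$ for $k<0$; the result is precisely the second term in \eqref{Eq_varphi_k_asymptotic}. The third line of \eqref{Eq_Plane_wave_solution} is handled analogously: only summands with $\omega_j<0$ contribute a non-decaying term, which is exactly the purpose of the factor $\Theta(-\omega_j)$; when $\omega_j=0$ the coefficient $\mu_j$ vanishes identically in Cases~I--III of Theorem~\ref{satz_Green_function} and the term is absent. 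The plane wave $e^{ikx-ik^2t}$ carries over unchanged, which yields \eqref{Eq_varphi_k_asymptotic}.

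For $k=0$ the two arguments $\frac{|x|}{2\sqrt{it}}\mp ik\sqrt{it}$ both collapse to $\frac{|x|}{2\sqrt{it}}$, which tends to $0$ rather than infinity, so \eqref{Eq_Lambda_Asymptotic} does not apply. Instead one uses the Taylor expansion $\Lambda(z)=1+\mathcal{O}(z)$ at the origin together with $e^{-x^2/(4it)}=1+\mathcal{O}(1/t)$ to conclude $e^{-x^2/(4it)}\Lambda\bigl(\tfrac{|x|}{2\sqrt{it}}\bigr)=1+\mathcal{O}(1/\sqrt{t})$. The sum of the first two lines of \eqref{Eq_Plane_wave_solution} then yields the constant combination in the first line of \eqref{Eq_varphi_k0_asymptotic}, and the third line is treated exactly as in the $k\neq 0$ case. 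I expect the main obstacle to be the sign bookkeeping: unifying the two distinct asymptotic regimes of $\Lambda$ (coming from the $\pm$ signs in the first two lines of \eqref{Eq_Plane_wave_solution}) into the single clean formula \eqref{Eq_varphi_k_asymptotic} requires carefully tracking the identifications between $\mp ik$ and $-i|k|$ and between $\mu_j^{(x,-k)}$ and $\mu_j^{(x,0^\pm)}$ across the two cases $k>0$ and $k<0$.
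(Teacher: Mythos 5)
Your proposal is correct and follows essentially the same route as the paper: substitute the asymptotic expansion \eqref{Eq_Lambda_Asymptotic} of $\Lambda$ into the explicit formula \eqref{Eq_Plane_wave_solution}, observe that the prefactor $e^{-x^2/(4it)}$ cancels the $x^2$-part of $e^{z^2}$ to leave $e^{\mp ik|x|-ik^2t}$ and $e^{\omega_j|x|+i\omega_j^2t}$, keep only the terms whose argument has real part $\to-\infty$ (which the paper encodes via $\Theta(\pm k)$ and $\Theta(-\omega_j)$), discard the $\omega_j=0$ terms because then $\mu_j\equiv 0$, and for $k=0$ replace the large-argument asymptotics by a Taylor expansion near the origin. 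The only cosmetic difference is in the $k=0$ case, where the paper uses the exact identity $e^{-x^2/(4it)}\Lambda(|x|/(2\sqrt{it}))=1-\erf(|x|/(2\sqrt{it}))$ whereas you expand $\Lambda$ and the exponential prefactor separately; both give the same $1+\mathcal{O}(1/\sqrt t)$ remainder.
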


\begin{proof}[Proof of Theorem \ref{satz_Asymptotics}]
In the explicit solution \eqref{Eq_Plane_wave_solution} we can use the asymptotic expansion \eqref{Eq_Lambda_Asymptotic} of the function $\Lambda$,
to get for every $k,\omega_j\in\mathbb{R}\setminus\{0\}$
\begin{align*}
\Lambda\Big(\frac{|x|}{2\sqrt{it}}\pm ik\sqrt{it}\Big)&=2\Theta(\pm k)e^{\big(\frac{|x|}{2\sqrt{it}}\pm ik\sqrt{it}\big)^2}+\mathcal{O}\Big(\frac{1}{\sqrt{t}}\Big),\quad\text{as }t\rightarrow\infty, \\
\Lambda\Big(\frac{|x|}{2\sqrt{it}}+\omega_j\sqrt{it}\Big)&=2\Theta(-\omega_j)e^{\big(\frac{|x|}{2\sqrt{it}}+\omega_j\sqrt{it}\big)^2}+\mathcal{O}\Big(\frac{1}{\sqrt{t}}\Big),\quad\text{as }t\rightarrow\infty.
\end{align*}
Note, that all the terms in \eqref{Eq_Plane_wave_solution} with $\omega_j=0$ vanish since in this case also $\mu_j^{(x,0^{\pm})}=0$ by its definition in Theorem \ref{satz_Green_function}. Hence we can use the above asymptotics to get the long time behaviour
\begin{align*}
\Psi(t,x;e^{ik\,\cdot})=&\left(\frac{\mu_+^{(x,0^+)}}{\omega_++ik}+\frac{\mu_-^{(x,0^+)}}{\omega_-+ik}+\frac{\mu_0^{(x,0^+)}}{2}\right)2\Theta(-k)e^{-ik|x|-ik^2t} \\
&+\left(\frac{\mu_+^{(x,0^-)}}{\omega_+-ik}+\frac{\mu_-^{(x,0^-)}}{\omega_--ik}+\frac{\mu_0^{(x,0^-)}}{2}\right)2\Theta(k)e^{ik|x|-ik^2t} \\
&-\sum\limits_{j=\pm}\left(\frac{\mu_j^{(x,0^-)}}{\omega_j-ik}+\frac{\mu_j^{(x,0^+)}}{\omega_j+ik}\right)2\Theta(-\omega_j)e^{\omega_j|x|+i\omega_j^2t} \\
&+e^{ikx-ik^2t}+\mathcal{O}\Big(\frac{1}{\sqrt{t}}\Big),\quad\text{as }t\rightarrow\infty,
\end{align*}
which easily simplifies to (\ref{Eq_varphi_k_asymptotic}). For $k=0$ we get from \eqref{Eq_Plane_wave_solution} the representation
\begin{align*}
\Psi(t,x;1)=&\left(\frac{\mu_+^{(x,0^+)}+\mu_+^{(x,0^-)}}{\omega_+}+\frac{\mu_-^{(x,0^+)}+\mu_-^{(x,0^-)}}{\omega_-}+\frac{\mu_0^{(x,0^+)}+\mu_0^{(x,0^-)}}{2}\right)e^{-\frac{x^2}{4it}}\Lambda\Big(\frac{|x|}{2\sqrt{it}}\Big) \\
&-\sum\limits_{j=\pm}\left(\frac{\mu_j^{(x,0^-)}}{\omega_j}+\frac{\mu_j^{(x,0^+)}}{\omega_j}\right)e^{-\frac{x^2}{4it}}\Lambda\Big(\frac{|x|}{2\sqrt{it}}+\omega_j\sqrt{it}\Big)+1.
\end{align*}
Using the Taylor series $\erf(z)=\frac{2}{\sqrt{\pi}}\sum\limits_{n=0}^\infty\frac{(-1)^n}{n!(2n+1)}z^{2n+1}$ we get the asymptotics
\begin{equation*}
e^{-\frac{x^2}{4it}}\Lambda\Big(\frac{|x|}{2\sqrt{it}}\Big)=1-\erf\Big(\frac{|x|}{2\sqrt{it}}\Big)=1+\mathcal{O}\Big(\frac{1}{\sqrt{t}}\Big).
\end{equation*}
Hence the wave function $\Psi(t,x;1)$ reduces to \eqref{Eq_varphi_k0_asymptotic} in the limit $t\rightarrow\infty$.
\end{proof}

\begin{bem}\label{lastrem}
We note that the $e^{\omega_j|x|}$-terms, $j=\pm$, in the asymptotics in
\eqref{Eq_varphi_k_asymptotic} and \eqref{Eq_varphi_k0_asymptotic} correspond to negative bound states of the underlying self-adjoint
Schr\"odinger operator.
In fact, a bound state corresponding to the eigenvalue (energy) $E\in\mathbb{R}$ is a function $\psi\in L^2(\mathbb{R})$ which satisfies
\begin{subequations}\label{Eq_Bound_state}
\begin{align}
-\frac{\partial^2}{\partial x^2}\psi(x)&=E\,\psi(x),\qquad x\in\mathbb{R}\setminus\{0\}, \label{Eq_Bound_state_1} \\
(I-J)\vvect{\psi(0^+)}{\psi(0^-)}&=i(I+J)\vvect{\frac{\partial}{\partial x}\psi(0^+)}{-\frac{\partial}{\partial x}\psi(0^-)}. \label{Eq_Bound_state_2}
\end{align}
\end{subequations}
In order to get a non-trivial $L^2$-solution of the differential equation \eqref{Eq_Bound_state_1} we need $E<0$; in this case the general solution is given by
\begin{equation*}
\psi(x)=\left\{\begin{array}{ll} A\,e^{-x\sqrt{-E}}, & x>0, \\ B\,e^{x\sqrt{-E}}, & x<0, \end{array}\right.
\end{equation*}
for some constants $A,B\in\mathbb{C}$.
Plugging the limits $\psi(0^\pm)$ and $\frac{\partial}{\partial x}\psi(0^\pm)$ into the jump condition leads to the linear system of equations
\begin{equation}\label{Eq_Bound_state_linear_system}
(I-J)\vvect{A}{B}=-i\sqrt{-E}\,(I+J)\vvect{A}{B}.
\end{equation}
A direct calculation using the matrix \eqref{Eq_General_unitary_matrix} and the property $|\alpha|^2+|\beta|^2=1$ of the matrix entries shows
\begin{equation*}
 \begin{split}
  &\det\big((I-J)+i\sqrt{-E}(I+J)\big)\\
   &\qquad=\det\left(\mmatrix{1-\alpha e^{i\phi}}{\bar\beta e^{i\phi}}{-\beta e^{i\phi}}{1-\bar\alpha e^{i\phi}}+
    i\sqrt{-E}\mmatrix{1+\alpha e^{i\phi}}{-\bar\beta e^{i\phi}}{\beta e^{i\phi}}{1+\bar\alpha e^{i\phi}}\right) \\
   &\qquad=\big(1-2\Re(\alpha)e^{i\phi}+e^{2i\phi}\big)+2i\big(1-e^{2i\phi}\big)\sqrt{-E}-\big(1+2\Re(\alpha)e^{i\phi}+e^{2i\phi}\big)\big(\sqrt{-E}\big)^2 \\
   &\qquad =2e^{i\phi}\Big(\big(\cos(\phi)-\Re(\alpha)\big)+2\sin(\phi)\sqrt{-E}-\big(\cos(\phi)+\Re(\alpha)\big)\big(\sqrt{-E}\big)^2\Big)
 \end{split}
\end{equation*}
and for $E<0$ this determinant vanishes if and only if
\begin{equation*}
 \sqrt{-E}=\begin{cases} \frac{\sin(\phi)\mp\sqrt{1-\Re(\alpha)^2}}{\cos(\phi)+\Re(\alpha)},& \Re(\alpha)\not=-\cos(\phi),\\
            -\cot(\phi),& \Re(\alpha)=-\cos(\phi)\not=-1.
           \end{cases}
\end{equation*}
When comparing with the three different cases
in Theorem \ref{satz_Green_function} we see that $\sqrt{-E}=-\omega_\pm$ with $\omega_\pm<0$  in Case I and $\sqrt{-E}=-\omega_+$ with $\omega_+<0$ in Case II
lead to negative eigenvalues.
More precisely, if $\omega:=\omega_+=\omega_-<0$, then $E=-\omega^2$ is an eigenvalue of multiplicity two with linear independent eigenfunctions
\begin{equation}\label{ws}
\psi_1(x)=e^{\omega|x|}\quad\text{and}\quad\psi_2(x)=\sgn(x)e^{\omega|x|}.
\end{equation}
If $\omega_+\neq\omega_-$, then each $\omega_\pm<0$ leads to an eigenvalue $E_\pm=-\omega_\pm^2$ of multiplicity one with corresponding eigenfunction
\begin{equation}\label{wa}
\psi(x)=\left(1\mp\frac{\Im(\beta)+\sgn(x)\big(\Im(\alpha)+i\Re(\beta)\big)}{\sqrt{1-\Re(\alpha)^2}}\right)e^{\omega_\pm|x|};
\end{equation}
we leave it to the reader to check that the function in \eqref{ws} and \eqref{wa} satisfy the interface condition \eqref{Eq_Bound_state_linear_system}.

\begin{figure}
\begin{tikzpicture}[domain=0:pi/2]
\draw[thick,->] (4.2,0)--(5,0);
\draw[thick,->] (0,4.2)--(0,4.7);
\filldraw[black] (4.8,0) node[anchor=north] {$\Re(\alpha)$};
\filldraw[black] (0,4.5) node[anchor=west] {$\phi$};
\filldraw[black] (-4.1,4) node[anchor=east] {$\pi$};
\filldraw[black] (4,-0.1) node[anchor=north] {$1$};
\filldraw[black] (-4.1,0) node[anchor=east] {$0$};
\filldraw[black] (-4,-0.1) node[anchor=north] {$-1$};

\draw[ultra thick,-] (-4,4)--(-4,0)--(4,0)--(4,4);
\draw[ultra thick,dashed] (-4,4)--(4,4);

\filldraw[black] (0,0.8) node[anchor=center] {$\omega_+<0,\,\omega_-\geq 0$};
\filldraw[black] (0,3.2) node[anchor=center] {$\omega_+\geq 0,\,\omega_-<0$};
\filldraw[black] (2.5,2) node[anchor=center] {$\omega_\pm<0$};
\filldraw[black] (-2.5,2) node[anchor=center] {$\omega_\pm\geq 0$};

\filldraw[black] (-4.3,2.7) node[anchor=east] {$\Re(\alpha)=\cos(\phi)$};
\filldraw[black] (4.3,2.7) node[anchor=west] {$\Re(\alpha)=-\cos(\phi)$};
\draw (-4.3,2.7)--(-3.1,3.1);
\draw (4.3,2.7)--(3.1,3.1);

\draw[very thick,samples=150] plot ({-3.98*cos(\x r)},4/pi*\x+0.02);
\draw[very thick,dashed,samples=150] plot ({-3.98*cos(\x r)},4/pi*\x-0.02);
\draw[very thick,dashed,samples=150] plot ({3.98*cos(\x r)},4/pi*\x+0.02);
\draw[very thick,samples=150] plot ({3.98*cos(\x r)},4/pi*\x-0.02);
\draw[very thick,samples=150] plot ({3.98*cos(\x r)},4-4/pi*\x+0.02);
\draw[very thick,dashed,samples=150] plot ({3.98*cos(\x r)},4-4/pi*\x-0.02);
\draw[very thick,dashed,samples=150] plot ({-3.98*cos(\x r)},4-4/pi*\x+0.02);
\draw[very thick,samples=150] plot ({-3.98*cos(\x r)},4-4/pi*\x-0.02);
\end{tikzpicture}
\caption{Possible negative eigenvalues $\omega_\pm<0$ of the Schr\"{o}dinger operator
depending on the choice of $\phi\in [0,\pi)$ and $\Re(\alpha)\in [-1,1]$ in the matrix $J$.
The continuous/dashed lines illustrate boundaries that do/don't belong to the parameter regions.
Case III in Theorem~\ref{satz_Green_function} corresponds to the left lower corner $(-1,0)$, Case II is depicted by the curve
$\Re(\alpha)=-\cos(\phi)$, and the remaining points constitute Case I.}
\end{figure}
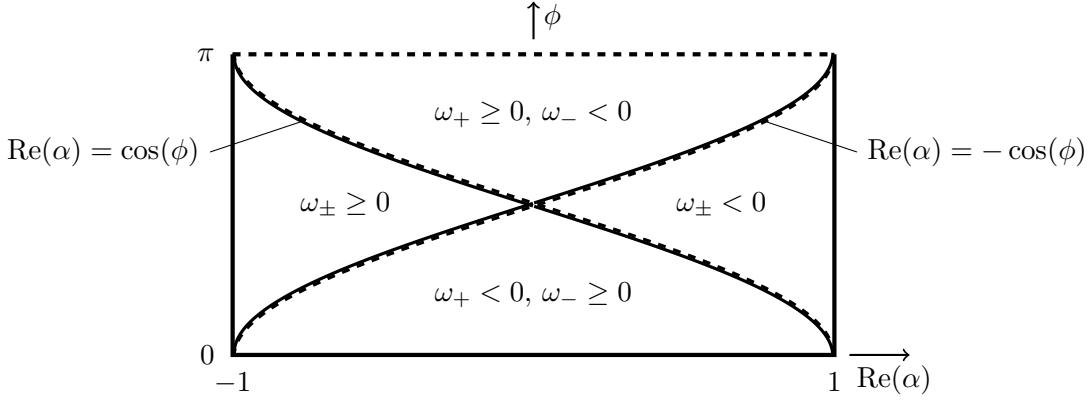
\end{bem}

We conclude this section with an example illustrating Proposition~\ref{prop_Plane_wave_solution} and Theorem~\ref{satz_Asymptotics}
for the important special case of $\delta$- and $\delta'$-potentials; cf. \cite[Theorem 3.2]{ABCS19}.

\begin{bsp}\label{deltadelatp}
Using the coefficients $\mu_0$, $\mu_\pm$ and $\omega_\pm$ from Example \ref{bsp_Delta}
and Example \ref{bsp_Deltaprime} it follows that for $k\in\mathbb R$ the plane wave solutions for the $\delta$- and $\delta'$-interaction are given by
\begin{align*}
\Psi_\delta(t,x;e^{ik\,\cdot})=&\left(-\frac{c}{2(c+ik)}\Lambda\Big(\frac{|x|}{2\sqrt{it}}-ik\sqrt{it}\Big)-\frac{c}{2(c-ik)}\Lambda\Big(\frac{|x|}{2\sqrt{it}}+ik\sqrt{it}\Big)\right. \\
&\quad\left.+\frac{c^2}{c^2+k^2}\Lambda\Big(\frac{|x|}{2\sqrt{it}}+c\sqrt{it}\Big)\right)e^{-\frac{x^2}{4it}}+e^{ikx-ik^2t}, \\
\Psi_{\delta'}(t,x;e^{ik\,\cdot})=&\left(\frac{ik\sgn(x)}{2(c+ik)}\Lambda\Big(\frac{|x|}{2\sqrt{it}}-ik\sqrt{it}\Big)+\frac{ik\sgn(x)}{2(c-ik)}\Lambda\Big(\frac{|x|}{2\sqrt{it}}+ik\sqrt{it}\Big)\right. \\
&\quad\left.-\frac{ikc\sgn(x)}{c^2+k^2}\Lambda\Big(\frac{|x|}{2\sqrt{it}}+c\sqrt{it}\Big)\right)+e^{ikx-ik^2t},
\end{align*}
and for $k\in\mathbb{R}\setminus\{0\}$ their asymptotics as $t\rightarrow\infty$ are
\begin{align*}
\Psi_\delta(t,x;e^{ik\,\cdot})&=e^{ikx-ik^2t}-\frac{c}{c-i|k|}e^{i|kx|-ik^2t}+\Theta(-c)\frac{2c^2}{c^2+k^2}e^{c|x|+ic^2t}+\mathcal{O}\Big(\frac{1}{\sqrt{t}}\Big), \\
\Psi_{\delta'}(t,x;e^{ik\,\cdot})&=e^{ikx-ik^2t}+\frac{ik\sgn(x)}{c-i|k|}e^{i|kx|-ik^2t}-\Theta(-c)\frac{2ick\sgn(x)}{c^2+k^2}e^{c|x|+ic^2t}+\mathcal{O}\Big(\frac{1}{\sqrt{t}}\Big).
\end{align*}
\end{bsp}


\begin{thebibliography}{99}
\bibitem{aav} Y. Aharonov, D. Albert, L. Vaidman, {\em How the result of a measurement of a component of the spin of a spin-1/2 particle can turn out to be 100},
Phys. Rev. Lett. {\bf 60} (1988), 1351--1354.

\bibitem{ABCS19}
Y. Aharonov, J. Behrndt, F. Colombo, P. Schlosser, {\em Schr\"odinger evolution of superoscillations with $\delta$- and $\delta'$-potentials},
to appear in  Quantum Stud. Math. Found. DOI: 10.1007/s40509-019-00215-4.

\bibitem{acsst4}
Y. Aharonov, F. Colombo, S. Nussinov, I. Sabadini, D.C. Struppa, J. Tollaksen, {\em Superoscillation phenomena in $SO(3)$}, Proc. Royal Soc. A. {\bf 468} (2012), 3587--3600.

\bibitem{acsst1}
Y. Aharonov, F. Colombo, I. Sabadini, D.C. Struppa, J. Tollaksen,
{\em Some mathematical properties of superoscillations},
  J. Phys. A {\bf 44} (2011), 365304 (16pp).

\bibitem{acsst3}
Y. Aharonov,  F. Colombo,  I. Sabadini, D.C. Struppa, J. Tollaksen,
{\em On the Cauchy problem for the Schr\"{o}dinger equation with superoscillatory initial data},
J. Math. Pures Appl. {\bf 99} (2013), 165--173.

\bibitem{acsst6}
Y. Aharonov, F. Colombo,  I. Sabadini, D.C. Struppa, J. Tollaksen, {\em
Superoscillating sequences as solutions of  generalized Schrodinger equations},
J. Math. Pures Appl. {\bf 103} (2015), 522--534.

\bibitem{JFAA}
Y. Aharonov,  F. Colombo,  I. Sabadini, D.C. Struppa, J. Tollaksen,
{\em  Superoscillating sequences in several variables},
J. Fourier Anal. Appl.  {\bf 22} (2016), 751--767.

\bibitem{acsst5}
Y. Aharonov,  F. Colombo,  I. Sabadini, D.C. Struppa, J. Tollaksen,
{\em The mathematics of superoscillations},  Mem. Amer. Math. Soc. {\bf 247} (2017), no. 1174, v+107 pp.



\bibitem{QS1}
 Y. Aharonov, F. Colombo,  D.C. Struppa,  J.  Tollaksen,
{\em
Schr\"odinger evolution of superoscillations under different potentials},
  Quantum Stud. Math. Found. {\bf 5} (2018), 485--504.



\bibitem{abook} Y. Aharonov, D. Rohrlich, {\em Quantum Paradoxes: Quantum Theory for the Perplexed}, Wiley-VCH Verlag, Weinheim, 2005.


\bibitem{QS3}
    Y. Aharonov, I. Sabadini, J. Tollaksen, A. Yger,
 {\em Classes of superoscillating functions},
     Quantum Stud. Math. Found. {\bf 5} (2018), 439--454.



\bibitem{av} Y. Aharonov, L. Vaidman, {\em Properties of a quantum
system during the time interval between two measurements}, Phys. Rev. A {\bf 41} (1990),
11--20.

 \bibitem{AlBrDa94} S. Albeverio, Z. Brze\'zniak, L. Dabrowski, {\em Time-dependent propagator with point interaction}, J. Phys. A: Math. Gen. {\bf 27} (1994), 4933--4943.

\bibitem{AlBrDa95} S. Albeverio, Z. Brze\'zniak, L. Dabrowski, {\em Fundamental Solution of the Heat and Schr\"odinger Equations with Point Interaction},
J. Funct. Anal. {\bf 130} (1995), 220--254.



\bibitem{ALB}
S. Albeverio, F. Gesztesy, R. Høegh-Krohn, H. Holden,
{\em Solvable Models in Quantum Mechanics},
 Second edition. With an appendix by Pavel Exner. AMS Chelsea Publishing,
 Providence, RI, 2005. xiv+488 pp.

\bibitem{QS2}
T. Aoki, F. Colombo, I. Sabadini, D. C. Struppa, {\em
Continuity of some operators arising in the theory of superoscillations},
 Quantum Stud. Math. Found., {\bf 5} (2018), 463--476.


\bibitem{AOKI} T. Aoki, F. Colombo,  I. Sabadini, D.C. Struppa,
{\em Continuity theorems for a class of convolution
operators and applications to superoscillations},
Ann. Mat. Pura Appl., {\bf 197} (2018), 1533--1545.

\bibitem{Jussi}
J. Behrndt, F. Colombo, P. Schlosser, {\em  Evolution of Aharonov--Berry superoscillations in Dirac $\delta$-potential},
 Quantum Stud. Math. Found., {\bf 6} (2019), 279--293.

\bibitem{BHS20}
J. Behrndt, S. Hassi, H. de Snoo, {\em Boundary Value Problems, Weyl Functions, and Differential Operators},
Monographs in Mathematics {\bf 108}, Cham: Birkh\"auser
(ISBN 978-3-030-36713-8/hbk; 978-3-030-36714-5/ebook). ix, 772 p. open access (2020).

\bibitem{BLL13}
J.~Behrndt, M.~Langer, V.~Lotoreichik,
{\em Schr\"{o}dinger operators with $\delta$ and $\delta'$-potentials supported on hypersurfaces},
Ann. Henri Poincar\'e {\bf 14} (2013), 385--423.

\bibitem{BeFa61} F.A. Berezin, L.D. Faddeev, {\em A remark on Schr\"odinger equation with a singular potential}, Sov. Acad. Sci. Doklady, {\bf 137} (1961), 1011--1014.

\bibitem{Be19} M.V. Berry et al, \textit{Roadmap on superoscillations}, 2019, Journal of Optics {\bf 21}, 053002.

\bibitem{berry2} M.V. Berry, {\em Faster than Fourier}, in
Quantum Coherence and Reality; in celebration of the 60th Birthday of
Yakir Aharonov ed. J.S.Anandan and J. L. Safko, World Scientific,
Singapore, (1994), pp.55--65.

\bibitem{MBANTENNA}
  M.V. Berry,
\textit{Superoscillations, endfire and supergain}
   in Quantum Theory: a Two-time Success Story: Yakir Aharonov Festschrift,
   D. Struppa and J. Tollaksen, editors (Springer, New York), pp.327--336.



\bibitem{berry} M.V. Berry,  {\em Evanescent and real waves in quantum billiards and Gaussian beams},  J. Phys. A. {\bf 27} (1994), 391.

\bibitem{berry-noise-2013}  M.V. Berry, {\em Exact nonparaxial transmission of subwavelength detail using superoscillations}, {J. Phys. A} {\bf 46} (2013), 205203.

\bibitem{BerryMILAN}
 M.V. Berry,
 {\em Representing superoscillations and narrow Gaussians with elementary functions},
  Milan J. Math. {\bf 84} (2016),  217--230.

\bibitem{b1} M.V. Berry, M.R. Dennis, {\em Natural superoscillations in
monochromatic waves in D dimension},
  J.  Phys. A {\bf 42} (2009), 022003.

\bibitem{b4} M.V. Berry, S. Popescu,
{\em Evolution of quantum superoscillations, and optical superresolution
without evanescent waves},
J. Phys. A {\bf 39} (2006), 6965--6977.

\bibitem{b5} M.V. Berry, P. Shukla, {\em Pointer supershifts and superoscillations in weak measurements}, J. Phys A {\bf 45} (2012), 015301.

\bibitem{BrJe01} Z. Brze\'zniak, B. Jefferies, {\em Characterization of one-dimensional point interactions for the Schr\"odinger operator by means of boundary conditions},
J. Phys. A {\bf 34} (2001) 2977--2983.


\bibitem{harmonic} R. Buniy, F. Colombo,  I. Sabadini, D.C. Struppa,
{\em Quantum harmonic oscillator  with superoscillating initial datum},
J. Math. Phys. {\bf 55} (2014), 113511.

\bibitem{C09} J. Campbell, \textit{Some exact results for the Schr\"odinger wave equation with a time dependent potential}, J. Phys. A \textbf{42} (2009), 365212, 7pp.

\bibitem{Ca93} M. Carreau, {\em Four-parameter point-interaction in 1D quantum systems}, J. Phys. A {\bf 26} (1993), 427--432.


\bibitem{ChHu93} P. R. Chernoff, R. Hughes, {\em A new class of point interactions in one dimension}, J. Funct. Anal. {\bf 111} (1993), 92--117.



\bibitem{CSSYgenfun}
F. Colombo, I. Sabadini, D.C. Struppa, A. Yger,
{\em Superoscillating functions and the
super-shift for generalized functions}, Preprint 2019.
Submitted for publication.


\bibitem{ExGr99} P. Exner, H. Grosse, {\em Some properties of the one-dimensional generalized point interactions},
Preprint (1999), arXiv:math-ph/9910029.

\bibitem{ER16}
P. Exner, J. Rohleder,
{\em Generalized interactions supported on hypersurfaces},
J. Math. Phys., {\bf 57} (2016), 041507 (23pp).

\bibitem{Fe36} E. Fermi, {\em Sul moto dei neutroni nelle sostanze idrogenate}, Ricera Scientifica, {\bf 7} (1936), 13--52.


\bibitem{kempf1}
P.J.S.G. Ferreira, A. Kempf,
{\em Unusual properties of superoscillating particles},
J. Phys. A, {\bf 37} (2004), 12067--76.

\bibitem{kempf2}
P.J.S.G. Ferreira, A. Kempf,
{\em Superoscillations: faster than the Nyquist rate},
IEEE Trans. Signal Processing, {\bf 54} (2006), 3732--3740.

\bibitem{GaSc86} B. Gaveau, L. S. Schulmann,
 {\em Explicit time-dependent Schr\"odinger propagators}, J. Phys. A, {\bf 19} (1986), 1833--1846.

\bibitem{KK03}
I.M. Karabash, A.S. Kostenko, {\em Similarity of $\text{sgn} x(-\frac{d^2}{dx^2}+c\delta)$ type operators to normal
and self-adjoint operators}, Math. Notes, {\bf 74} (2003), 127--131.


\bibitem{kempfQS}
A. Kempf,
{\em Four aspects of superoscillations},
 Quantum Stud. Math. Found., {\bf 5} (2018),  477--484.

\bibitem{KrPe31} R. de. L. Kronig, W.G. Penney, {\em Quantum mechanics of electrons in crystal lattices}, Proc. Roy. Soc. (London), {\bf 130A} (1931), 499--513.


\bibitem{leeferreira2} D.G. Lee, P.J.S.G. Ferreira, {\em Superoscillations with optimal numerical stability}, IEEE Sign. Proc. Letters, {\bf 21} (12) (2014), 1443--1447.






\bibitem{lindberg} J. Lindberg, {\em Mathematical concepts of optical superresolution},  Journal of Optics, {\bf 14} (2012),
 083001.

\bibitem{Ma89} E.B. Manoukin, {\em Explicit derivation of the propagator for a Dirac delta potential}, J. Phys. A {\bf 22} (1989), 67--70.

\bibitem{MPS16}
A. Mantile, A. Posilicano, M. Sini,
{\em Self-adjoint elliptic operators with boundary conditions on not closed hypersurfaces},
J. Differential Equations {\bf 261} (2016), 1--55.

\bibitem{RoTa96} J.M. Rom\'an, R. Tarrach, {\em The regulated four-parameter one-dimensional point interaction}, J. Phys. A {\bf 29} (1996), 6073--6085.


\bibitem{Royden}
H.L. Royden,
{\em Real Analysis}, The Macmillan Co., New York; Collier-Macmillan Ltd., London 1963 xvi+284 pp.


\bibitem{Se86} P. \v Seba, {\em The generalized point interaction in one dimension}, Czech. J. Phys., {\bf B36} (1986), 667--673.

\bibitem{Se71} I. Segal {\em Linear operators and approximation}, Int. Ser. Numer. Math., {\bf 20} (1971), pp.54--61


\bibitem{TDFG}
  G. Toraldo di Francia, \textit{Super-gain antennas and optical resolving power}, Nuovo Cimento Suppl., {\bf 9} (1952), 426--438.




\end{thebibliography}
\end{document}